\newtheorem{theorem}{Theorem}
\newtheorem{example}[theorem]{Example}
\newtheorem{claim}{Claim}
\newtheorem{lemma}[theorem]{Lemma}
\newtheorem{prop}[theorem]{Proposition}
\newtheorem{corollary}[theorem]{Corollary}
\newtheorem{definition}[theorem]{Definition}
\newtheorem{Prob}[theorem]{Question}
\newtheorem*{main}{Main Theorem}
\pgfplotsset{compat=1.15}
\pgfplotsset{compat=1.15}
\newcommand{\cupdot}{\mathbin{\mathaccent\cdot\cup}}
\begin{document}
	
	\definecolor{qqqqff}{rgb}{0.,0.,1.} 
	\definecolor{bfffqq}{rgb}{0.7490196078431373,1.,0.}
	\definecolor{qqwwtt}{rgb}{0.,0.4,0.2}
	\definecolor{zzccqq}{rgb}{0.6,0.8,0.}
	\definecolor{qqffqq}{rgb}{0.,1.,0.}
	\definecolor{ffqqqq}{rgb}{1.,0.,0.}
	\definecolor{xfqqff}{rgb}{0.4980392156862745,0.,1.}
	\definecolor{ffqqff}{rgb}{1.,0.,1.}
	\definecolor{cqcqcq}{rgb}{0.7529411764705882,0.7529411764705882,0.7529411764705882}
	\definecolor{ffffff}{rgb}{1.,1.,1.}
	
	\definecolor{celesteIBM}{RGB}{100,143,255}
	\definecolor{violettoIBM}{RGB}{120,94,240}
	\definecolor{magentaIBM}{RGB}{220,38,127}
	\definecolor{arancioneIBM}{RGB}{254,97,0}
	\definecolor{gialloIBM}{RGB}{255,176,0}
	
	\definecolor{bluTol}{RGB}{51,34,136}
	\definecolor{verdeTol}{RGB}{17,119,51}
	\definecolor{verdeacquaTol}{RGB}{68,170,153}
	\definecolor{celesteTol}{RGB}{136,204,238}
	\definecolor{gialloTol}{RGB}{221,204,119}
	\definecolor{rosaTol}{RGB}{204,102,119}
	\definecolor{violaTol}{RGB}{170,68,153}
	\definecolor{rossoTol}{RGB}{136,34,85}

\title{Critical classes of power graphs and reconstruction of directed power graphs}
\author{\textbf{Daniela Bubboloni} \\
{\small {Dipartimento di Matematica e Informatica ``Ulisse Dini''} }\\
\vspace{-6mm}\\
{\small {Universit\`{a} degli Studi di Firenze} }\\
\vspace{-6mm}\\
{\small {viale Morgagni, 67/a, 50134, Firenze, Italy}}\\
\vspace{-6mm}\\
{\small {e-mail: daniela.bubboloni@unifi.it}}\\
\vspace{-6mm}\\
{\small tel: +39 055 2759667}\\ \and \textbf{Nicolas Pinzauti}
 \\
{\small {Dipartimento di Matematica e Informatica ``Ulisse Dini''} }\\
\vspace{-6mm}\\
{\small {Universit\`{a} degli Studi di Firenze} }\\
\vspace{-6mm}\\
{\small {viale Morgagni, 67/a, 50134, Firenze, Italy}}\\
\vspace{-6mm}\\
{\small {e-mail: nicolas.pinzauti@edu.unifi.it}}\\
\vspace{-6mm}\\
{\small tel: 3345890261}}

\maketitle

\begin{abstract}
In a graph $\Gamma=(V,E)$, we consider the common closed neighbourhood of a subset of vertices and use this notion to introduce a Moore closure operator in $V.$
We also consider the closed twin equivalence relation in  which  two vertices are equivalent if they have the same closed neighbourhood. Those notions are deeply explored when $\Gamma$ is the power graph associated with a finite group $G$. In that case,
 among the corresponding closed twin equivalence classes, we introduce the concepts of plain, compound and critical classes. The study of critical classes, together with properties of the Moore closure operator, allow us to correct a mistake in the proof of {\rm \cite[Theorem 2 ]{Cameron_2}} and to deduce a simple algorithm to reconstruct the directed power graph of a finite group from its undirected counterpart, as asked in \cite[Question 2]{GraphsOnGroups}. 
 \end{abstract}

\vspace{4mm}

\noindent \textbf{Keywords: } Moore closures, graphs, finite groups, power graphs,  directed power graphs.

\vspace{2mm}

\noindent \textbf{MSC classification:} 05C25; 06A15.

\section{Introduction}

	
	


The interaction between group theory and graph theory has been known since 1878, when Cayley graphs were first defined. In the following years various graphs were associated with groups. In 1955, Brauer and Fowler introduced the commuting graph in  \cite{1955Commuting}, in early 2000 the directed power graphs have been introduced for semigroups in \cite{semigroups,semigroups_2}, and in 2009 Chakrabarty, Ghosh and Sen in \cite{CGS_DefPowerGraph} defined its undirected version, called simply the power graph. Given a group $G$, the directed power graph $\vec{\mathcal{P}}(G)$ has vertex set $G$ and arc set $\{(x,y)\in G^2: x\neq y, \  y=x^m\  \hbox{for some}\  m\in\mathbb{N}\}.$  The power graph $\mathcal{P}(G)$ is its underlying graph. The recent  state of the art for the power graph is well summarized in \cite{Kumar}.

Investigating a group $G$ through a graph $\Gamma(G)$ associated with it allows us to focus on some specific properties of the group, reducing the complexity of the algebraic structure $G$ to the more simple combinatorial object $\Gamma(G)$. Of course, if the reduction of information is dramatic, then the use of  $\Gamma(G)$ for the study of $G$ can be almost useless. 
Among the many graphs associated with groups, the power graph seems to be one of the best to reveal information about the group structure. A result among many justifying this opinion is surely \cite[Theorem 15]{filo}, which shows that if $G$ is a finite simple group then any finite group having the same power graph as $G$ must be isomorphic to $G.$ 
However it is well-known that, given two groups $G_1$ and $G_2$, $\mathcal{P}(G_1)\cong\mathcal{P}(G_2)$ does not imply $G_1\not\cong G_2$. In other words, if a certain graph $\Gamma$ is the power graph of some group, then $\Gamma$ can be the power graph of many non-isomorphic groups. 
For further arguments distinguishing the power graph from other graphs associated with groups,
 the reader is referred to \cite[Introduction]{Bubboloni_3}.

In principle, the directed power graph should encode even more information  than the power graph. Surprisingly this is not the case, at least for finite groups. In this paper we deal only with finite groups, and we call a graph $\Gamma$  a {\it power graph} if there exists at least one finite group $G$ such that $\Gamma=\mathcal{P}(G)$.
As our Main Theorem, we show that, given a power graph $\Gamma=\mathcal{P}(G)$, for a certain finite group $G$, then we can always reconstruct $\vec{\mathcal{P}}(G)$ by purely arithmetical and graph theoretical considerations, without taking into account any group theoretical information about $G$  (see Section \ref{dimmain} for formal details).

\begin{main} We can reconstruct the directed power graph from any power graph.
\end{main}
We also emphasize that the question of the reconstruction of the directed power graph from the power graph makes sense also for infinite groups and there are, in the recent  literature, important contributions for certain classes of infinite groups \cite{zah}. However, in this paper, we deal only with finite groups. 
The proof of the Main Theorem is entirely constructive and gives rise to a precise algorithm whose description and pseudo-code are available in the Appendix.  That completely answers one of the questions recently set by P. J. Cameron \cite[Question  $2$]{GraphsOnGroups}, which asks to find a simple algorithm for constructing the directed power graph from the power graph. 

Evidence of emerging interest in such questions includes a very recent preprint \cite{indiani}, dealing with an algorithm for the reconstruction of the directed power graph from  both the enhanced power graph and the power graph. Recall that the enhanced power graph of a group $G$ has vertex set $G$ and two vertices are adjacent if they generate a cyclic subgroup.
That algorithm is completely independent from ours and based on maximal cyclic subgroups. 

The main scope of our research is theoretical and is mainly inspired by one of the most cited papers about power graphs by  P. J. Cameron \cite{Cameron_2}. We have found there many deep stimulating ideas and tried to exploit them to their maximum extent. 

As a first step we generalize a construction in \cite{Cameron_2}, based on closed neighbourhoods, to any graph giving rise to what we call the {\em neighbourhood closure operator}. Remarkably, this operator turns out to be a Moore closure operator (Section \ref{Moore-sec}). It plays a central role in our paper and, to the best of our knowledge, it does not appear elsewhere in the literature. 
As in \cite{Cameron_2}, we consider the equivalence relation $\mathtt{N}$ which puts in relation two vertices of the power graph of a group $G$ having the same closed neighbourhood.
The study of the partition of the power graph into the corresponding equivalence classes was initiated in \cite{Cameron_2}, but it is far from complete. As in \cite{Cameron_2}, we split the $\mathtt{N}$-classes into two types (which we call {\em plain} and {\em compound}) according to their behaviour with respect to another equivalence relation $\diamond$, for which two vertices are equivalent if they generate the same subgroup of $G$. Then we introduce a further crucial type of $\mathtt{N}$-class, which we call a {\em critical class} (Section \ref{sect-crit-class}). The critical classes arise, in principle, from the necessity to rectify a serious mistake in one argument leading to the main result in \cite{Cameron_2}. 

We emphasize that the error in \cite{Cameron_2} is not rectifiable by modifying some reasoning (see Section \ref{sect-crit-class} for details) and can be fixed only by developing radically new theoretical instruments. An intriguing question is to decide when a critical class is of plain or compound type. That task is completed using a recent result by Feng, Ma and Wang in \cite{Ma et al}. 
We observe that even when all the tools are ready, the proof of the Main Theorem (Section \ref{dimmain}) remains non-obvious and some delicate parts benefit from information coming from general graph theory, such as those describing the influence of the $\mathtt{N}$-classes on the graph automorphisms (Proposition \ref{N-classi_automorphism}).

Finally we stress that the contribution of our paper goes beyond the reconstruction of the directed power graph. Our general analysis of the $\mathtt{N}$-classes can be used for other research on power graphs. Moreover, the creation of a significant Moore closure for graphs seems to be a promising step towards new  general research on graphs. In particular, even though the neighbourhood closure operator is not usually a Kuratowski operator,  the possibility remains open, for certain classes of graphs, of obtaining an interesting topological structure.

\section{Notation and basic facts}
We denote by $\mathbb{N}$ the set of positive integers and we set $\mathbb{N}_0\coloneqq\mathbb{N}\cup\{0\}$. 
For $k\in \mathbb{Z}$ we set $[k]\coloneqq \{x\in \mathbb{N}: x\leq k\}$ and $[k]_0 \coloneqq \{x\in \mathbb{N}_0: x\leq k\}.$ 
Let $X$ be a finite set. We denote by $2^X$ its power set and by $S_{X}$ the symmetric group on $X$. When $X=[k],$ we simplify the notation into $S_k.$ If $\mathcal{K}=\{X_1,\dots,X_r\}$, with $r\in \mathbb{N},$ is a partition of $X$ and $\psi\in S_{X}$, then we denote by $\psi(\mathcal{K})$ the partition of $X$ given by $\{\psi(X_1),\dots, \psi(X_r)\}$.
Given $A,B\subseteq X$, we write $X=A\cupdot B$ if $X=A\cup B$ and $A\cap B=\varnothing.$
Let $\Gamma=(V, E)$ be a graph with vertex set $V\neq \varnothing$ and edge set $E\subseteq\{e\subseteq V: |e|=2\}$. 
For $X\subseteq V$, we denote by $\Gamma_X$ the induced subgraph of $\Gamma$ with vertex set $X$. For $x\in V$, the closed neighbourhood of $x$ in $\Gamma$ is given by  $N[x]=\left\lbrace y \in V | \left\lbrace y, x \right\rbrace \in E\right\rbrace\cup\{x\} $.
Note that, for every $x,y\in V$, we have $y\in N[x]$ if and only if $x\in N[y]$. If $N[x]=V$, then $x$ is called a \emph{star vertex} of $\Gamma$. The set of star vertices of $\Gamma$ is denoted by $\mathcal{S}$. 
For $x,y\in V$, we write $x \mathtt{N}y$ if $N[x]=N[y]$.  The relation $\mathtt{N}$ is an equivalence relation on $V,$ called the \emph{closed twin relation} of $\Gamma.$ Note that $x \mathtt{N}y$, with $x\neq y$, implies $\{x,y\}\in E$.
We denote the $\mathtt{N}$-class of $x\in V$ by $[x]_{\mathtt{N}}$.
 
 \begin{prop}\label{N-classi_automorphism} 
Let $\Gamma=(V,E)$ be a graph and  $\mathcal{K}$ be the partition of $V$ into its $\mathtt{N}$-classes. Then $\times_{_{C\in \mathcal{K}}} S_C\leq Aut(\Gamma).$
	\end{prop}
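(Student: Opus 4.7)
The plan is to observe that an element $\psi\in \times_{C\in\mathcal{K}} S_C$ is exactly a permutation of $V$ which stabilises each $\mathtt{N}$-class setwise. Consequently, $\psi(v)\in [v]_{\mathtt{N}}$ for every $v\in V$, and the very definition of $\mathtt{N}$ yields the key identity
\[
N[\psi(v)] = N[v]\qquad\text{for all } v\in V.
\]
Since the inverse $\psi^{-1}$ also belongs to $\times_{C\in\mathcal{K}} S_C$, it suffices to prove $\psi(E)\subseteq E$.

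Thus I would pick an arbitrary edge $\{x,y\}\in E$ (so $x\ne y$ and $y\in N[x]$) and use the identity above applied at $x$ to obtain $y\in N[\psi(x)]$. This means either $y=\psi(x)$, or $\{\psi(x),y\}\in E$. In the second subcase, I would then apply the identity at $y$ to get $\psi(x)\in N[y]=N[\psi(y)]$; injectivity of $\psi$ excludes $\psi(x)=\psi(y)$ (which would force $x=y$), so $\{\psi(x),\psi(y)\}\in E$, as required.

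The only subcase requiring care is $y=\psi(x)$, where $[x]_{\mathtt{N}}=[y]_{\mathtt{N}}$. Here the remark immediately preceding the proposition (two distinct $\mathtt{N}$-equivalent vertices are joined by an edge) does the job, but I must first verify $\psi(y)\ne y$: if $\psi(y)=y$ then, together with $\psi(x)=y$ and injectivity of $\psi$, we get $x=y$, contradicting $\{x,y\}\in E$. Hence $\psi(y)\in [y]_{\mathtt{N}}\setminus\{y\}$, the remark gives $\{y,\psi(y)\}\in E$, i.e.\ $\{\psi(x),\psi(y)\}\in E$.

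The heart of the argument is the identity $N[\psi(v)]=N[v]$; the only mild obstacle is the bookkeeping of the degenerate subcase $y=\psi(x)$, where one must invoke injectivity of $\psi$ together with the remark on adjacency within an $\mathtt{N}$-class. Apart from that, everything is routine.
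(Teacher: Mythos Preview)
Your proof is correct and rests on the same key observation as the paper's, namely that $N[\psi(v)]=N[v]$ for every $v\in V$; the only organizational difference is that the paper first verifies the claim for permutations supported on a single class $C$ and then appeals to the partition, whereas you work directly with an arbitrary element of the full product. Your handling of the degenerate subcase $y=\psi(x)$ is sound and the invocation of the remark on adjacency of distinct $\mathtt{N}$-equivalent vertices is exactly what is needed.
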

	\begin{proof} Let $C\in \mathcal{K}$. For $\sigma\in S_C$ define $\tilde{\sigma}: V \rightarrow V$ by  $\tilde{\sigma}(x)=x$ for all $x \in V\setminus C$, and $\tilde{\sigma}(x)=\sigma(x)$ for all $x \in C$. We show that $\tilde{\sigma}$ is a graph automorphism of $\Gamma$. 
	Clearly $\tilde{\sigma}$ is a bijection on $V$. We show that, for every $x,y\in V$ with $x\neq y$, we have $\{x,y\}\in E$ if and only if $\{\tilde{\sigma}(x), \tilde{\sigma}(y)\}\in E$. If both $x,y\in V\setminus C,$ simply observe that $\{\tilde{\sigma}(x), \tilde{\sigma}(y)\}=\{x,y\}.$  If both $x,y\in C,$ we also have $\tilde{\sigma}(x), \tilde{\sigma}(y)\in C$ and thus both $\{x,y\}\in E$ and $\{\tilde{\sigma}(x), \tilde{\sigma}(y)\}\in E$ hold. Consider finally the case $x\in C$ and $y\in V\setminus C$. Then we have $\tilde{\sigma}(x)\in C$, $N[x]=N[\tilde{\sigma}(x)]$ and $\tilde{\sigma}(y)=y$. If $\{x,y\}\in E$, then $y\in N[x]=N[\tilde{\sigma}(x)]$, so that $\{\tilde{\sigma}(x), \tilde{\sigma}(y)\}=\{\tilde{\sigma}(x),y\}\in E.$ Conversely, if $\{\tilde{\sigma}(x), \tilde{\sigma}(y)\}\in E,$ then  we have $\{\tilde{\sigma}(x),y\}\in E$ so that  $y\in N[\tilde{\sigma}(x)]=N[x]$ and hence $\{x,y\}\in E$. Now, using the fact that $\mathcal{K}$ is a partition of $V$,  we immediately get $\times_{_{C\in \mathcal{K}}} S_C\leq Aut(\Gamma)$.
\end{proof}
 
 Throughout the paper, when we desire to emphasize that a set is the vertex set of a graph $\Gamma$ we indicate it by $V_{\Gamma}$. We do the same for the edge set or for other symbols related to the graph $\Gamma$.
 We now briefly recall the power graphs, that is, the graphs on which our paper focuses. 
All the groups considered in this paper are finite.
 Let $G$ be a group with identity element $1$. 
 The \emph{power graph} of $G$, denoted by $\mathcal{P}(G)$, has vertex set $V\coloneqq G$ and $\left\lbrace x,y \right\rbrace \in E$ if $x\neq y$ and there exists a positive integer $m$ such that $x=y^m$ or $y=x^m$. 
The \emph{proper power graph} of $G$, denoted $\mathcal{P}^*(G)$, is the subgraph of $\mathcal{P}(G)$ induced by $G\setminus\{1\}$.
It is easily observed that if $H$ is a subgroup of $G$, then  the subgraph of $\mathcal{P}(G)$ induced by $H$ coincides with $\mathcal{P}(H).$
A graph $\Gamma$ is said to be a power graph if there exists a group $G$ such that $\Gamma=\mathcal{P}(G)$.

We are also interested in the directed version of the power graph. 
The \emph{directed power graph} of $G$, denoted by $\vec{\mathcal{P}}(G)$, has vertex set $V\coloneqq G$ and arc set 
$A\coloneqq\{(x,y)\in G^2: x\neq y, \  y=x^m\  \hbox{for some}\  m\in\mathbb{N}\}.$
Note that $\left\lbrace x,y \right\rbrace \in E$ if and only if at least one of $(x,y)\in A$ and $(y,x)\in A$ holds.
We say that $x,y\in G$, with $x\neq y$, are joined in $\mathcal{P}(G)$ (or in $\vec{\mathcal{P}}(G)$) if 
 $\left\lbrace x,y \right\rbrace \in E$. 
 Let  $X, Y\subseteq G$. We say that $X$ and $Y$ are joined in $\mathcal{P}(G)$ (or in $\vec{\mathcal{P}}(G)$) if there exist $x\in X$ and $y\in Y$ that are joined.
In that case, if it happens that $(x,y)\in A$, then we say that the arc joining $X$ and $Y$ is directed from $X$ to $Y$. 
  
 We denote by $\phi$ the Euler's totient function.  Throughout the paper we freely use the well-known fact that if $m \mid n$, then $\phi(m) \mid \phi(n)$ with equality if and only if $m=n$ or $m$ is odd and $n=2m$.

 \section{A Moore closure operator for graphs }\label{Moore-sec}
  
 We start our research with a section about  closure operators for graphs. The results that we are going to obtain will be used later to deal with the power graphs. However, they seem to be of interest for graphs in general (\cite{Dikran}, \cite{sla}, \cite{sla2}).

 Recall that an operator $c$ on a set $V$ is a function $c:2^V\rightarrow 2^V$, and that $c$ is called a \emph{Moore closure operator} on $V$ (see \cite[Section 4.5.\,a]{librone}) if $c$ is isotone ($A\subseteq B$ 
implies $ c(A)\subseteq c(B)$), extensive ($A\subseteq c(A)$) and idempotent ($c^2(A)=c(A)$);  or a \emph{Kuratowski closure operator} on $V$ (see \cite[Section 5.19]{librone}) if $c$ preserves the empty set ($c(\varnothing)=\varnothing$),  is extensive, idempotent and preserves the binary unions ($c(A\cup B)=c(A)\cup c(B)$). 
It is immediately checked that every Kuratowski closure operator is also a Moore closure operator. Note also that Moore closures always exist, because one can consider the trivial Moore closure given by $c(X)=V$, for all $X\subseteq V.$
If $c$ is a Moore closure operator, then one defines a subset $X$ of $V$ to be closed if $c(X)=X.$ The Moore closure operators are interesting in pure and applied mathematics and computer science as very well described in \cite{mo}. Non-trivial examples  naturally arise in algebra. For instance, if $G$ is a group and $c:2^G\rightarrow 2^G$ is defined, for every $X\subseteq G$, by $c(X)\coloneqq\langle X\rangle$ then $c$ is a Moore closure operator. 
Additionally, the Kuratowski closure operators are of huge importance, because the closed sets of such operators give a topological structure to  $V$.
  
In \cite{Cameron_2}, for a subset $X$ of vertices in a power graph, the sets $N[X]=\bigcap_{x\in X}  N[x]$ and $\hat{X}=N[N[X]]$ are defined and 
fruitfully used when $X$ is an $\mathtt{N}$-class. They are also considered in \cite{zah}, dealing with some subsets in power graphs of infinite groups. 
 It seems important to recognize the role of operators of such objects, generalize them to any graph and explore their properties. 
  
 Let $\Gamma=(V, E)$ be a graph. For  $X\subseteq V$ we define  the  \emph{common closed neighbourhood} of $X$ by
	$$N[X]\coloneqq\begin{cases}
\bigcap_{x\in X}  N[x]&{\text{\rm if }} X\neq \varnothing \\
&\\
V&{\text{\rm if }} X= \varnothing.
\end{cases}$$
For instance, one has $N[V]=\mathcal{S}$.
The consideration of the common closed neighbourhoods in graph theory dates back at least to Lov\'asz, in 1978, with the famous construction of the neighbour complex $\mathcal{N}(\Gamma)$, which allowed him to prove the Kneser's conjecture \cite{lov}. Recall that $\mathcal{N}(\Gamma)$ has vertex set $V$ and simplices given by those $X\subsetneq V$ having $N[X]\neq\varnothing$.
We warn the reader that 
typically, in the literature, the symbol $N[X]$ stands for the union of the closed neighbourhoods of the vertices in $X$ (see, for instance, \cite[Section 2.1]{dom}), and not for their intersection. 

We next define, for every $X\subseteq V$,  the \emph{neighbourhood closure} of $X$ by $\hat{X}\coloneqq N[N[X]]$. That gives rise to the \emph{neighbourhood closure operator}  on $V$, which plays a main role in our research.
Note that  $\hat{\varnothing}=\mathcal{S}$, $\hat{V}=V$ and $\hat{\mathcal{S}}=\mathcal{S}.$  

We present now some useful properties of the common closed neighbourhoods and of the neighbourhood closure  operator. 
\begin{prop}\label{operatoreChiusura} 
	Let $\Gamma=(V,E)$ be a graph and  $\mathtt{N}$ be its closed twin relation.
	Then, for every $X\subseteq V$, the followings facts hold:
		\begin{enumerate}
		\item[$(i)$]  $ \hat{X}\supseteq X\cup \mathcal{S}$ and if $\hat{X}\neq \varnothing$, then $\hat{X}$ is a union of $\mathtt{N}$-classes; 

			\item[$(ii)$] If $A \subseteq B\subseteq V$ then $N[A] \supseteq N[B]$ and $\hat{A}\subseteq \hat{B}$;
			
			\item[$(iii)$] $N[X]=N[\hat{X}]$ and $\widehat{( \hat{X})}= \hat{X}$;
			\item[$(iv)$]  If $A, B\subseteq V$ then $N[A\cup B]=N[A]\cap N[B]$ and $\widehat{A\cup B}\supseteq \hat{A}\cup \hat{B}$;
			\item[$(v)$] If $C$ is an $\mathtt{N}$-class and $y\in C$, then  $N[C]=N[y]$ and $\hat{C}=\bigcap_{z \in N[y]} N[z] \subseteq N[y] $.

		\end{enumerate}
		\end{prop}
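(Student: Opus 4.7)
The entire proposition rests on the symmetry $y \in N[x] \iff x \in N[y]$, which says that the map $X \mapsto N[X]$ is an antitone self-adjoint operator on $2^V$: for all $A, B \subseteq V$, one has $A \subseteq N[B] \iff B \subseteq N[A]$. From this single observation, the standard theory of Galois connections forces the composition $\hat{X} = N[N[X]]$ to be a Moore closure, and almost every item of the proposition is a direct consequence. My plan is to exploit this structure rather than argue each item from scratch.

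First I would prove $(ii)$, since it is the cleanest: the inclusion $N[A] \supseteq N[B]$ when $A \subseteq B$ follows from the intersection definition (with a trivial check for $A = \varnothing$), and applying $N$ again reverses the inclusion a second time to give $\hat{A} \subseteq \hat{B}$. Next I would tackle $(i)$: the inclusion $X \subseteq \hat{X}$ is the usual Galois argument, since every $x \in X$ lies in $N[y]$ for every $y \in N[X]$ by symmetry; while $\mathcal{S} \subseteq \hat{X}$ follows because star vertices lie in every closed neighbourhood, with a separate check in the degenerate case $N[X] = \varnothing$, which forces $\mathcal{S} = \varnothing$ and $\hat{X} = V$. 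The more delicate half of $(i)$, that $\hat{X}$ is a union of $\mathtt{N}$-classes when non-empty, comes from rewriting $y \in \hat{X}$ as $N[X] \subseteq N[y]$, a condition that depends only on $N[y]$ and is therefore invariant along the $\mathtt{N}$-class of $y$.

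For $(iii)$, the inclusion $N[\hat{X}] \subseteq N[X]$ is immediate from $(i)$ and $(ii)$ applied to $X \subseteq \hat{X}$; the reverse inclusion is again a Galois-style argument, after which the idempotency $\widehat{\hat{X}} = \hat{X}$ drops out by applying $N$ once more. The equality $N[A \cup B] = N[A] \cap N[B]$ in $(iv)$ is a direct unpacking of the intersection definition, and the inclusion $\widehat{A \cup B} \supseteq \hat{A} \cup \hat{B}$ is an immediate corollary of $(ii)$. Item $(v)$ is essentially tautological, since both sides equal the triple composition $N[N[N[X]]]$. Finally, $(vi)$ follows from the fact that every $c$ in an $\mathtt{N}$-class $C$ has the same closed neighbourhood $N[y]$, so the defining intersection collapses to $N[y]$; the containment $\hat{C} \subseteq N[y]$ is obtained either from $y \in N[y] = N[\hat{C}]$ via $(iii)$, or by observing that $N[y]$ itself occurs as one of the terms in the intersection $\bigcap_{z \in N[y]} N[z]$.

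The only point requiring genuine care, rather than mechanical bookkeeping, is the proof that $\hat{X}$ is a union of $\mathtt{N}$-classes: one must pass correctly from the intersection expression $\hat{X} = \bigcap_{z \in N[X]} N[z]$ to the equivalent characterisation $N[X] \subseteq N[y]$, and then note the $\mathtt{N}$-invariance of this condition. The empty-set edge cases — $X = \varnothing$, $N[X] = \varnothing$, $\mathcal{S} = \varnothing$ — must also be tracked throughout, but each can be resolved in a single line once one observes that $\mathcal{S} = \varnothing$ whenever $N[X] = \varnothing$ for some non-empty $X$.
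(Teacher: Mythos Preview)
Your proposal is correct and covers the same ground as the paper's proof, but packaged through the Galois-connection observation $A \subseteq N[B] \iff B \subseteq N[A]$, which the paper never makes explicit. The paper instead verifies each item by direct element-chasing and case analysis on the empty set; in particular, its proof of $(iii)$ proceeds by contradiction (picking $u \in N[X] \setminus N[\hat{X}]$ and deriving an inconsistency), whereas your argument obtains $N[X] \subseteq N[\hat{X}]$ directly from extensivity applied to $N[X]$. Your reformulation of ``$y \in \hat{X}$'' as ``$N[X] \subseteq N[y]$'' for the $\mathtt{N}$-invariance in $(i)$ is also slightly slicker than the paper's explicit chase. The net effect is the same mathematics with a cleaner organizing principle; what you gain is brevity and a conceptual explanation for why the closure axioms hold, at the cost of assuming the reader is comfortable with the Galois-connection formalism.
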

	\begin{proof}
	$(i)$ Let $X\subseteq V$.  Assume first that $N[X]= \varnothing$. Then $$\hat{X}=N[\varnothing]=V\supseteq X \cup \mathcal{S}.$$ Assume next that $N[X]\neq\varnothing$.  Then $ \hat{X}=\bigcap_{u\in N[X]}  N[u]$. Pick $x\in X$ and let $u\in N[X]$. Then we have $u\in N[x]$ and thus also $x\in N[u].$ It follows that $x\in \hat{X}$ and hence $ \hat{X}\supseteq X$. We finally note that
$$\hat{X}=\bigcap_{x\in N[X]}  N[x] \supseteq\bigcap_{x\in V}  N[x]=\mathcal{S}.$$

We now show that, when $\hat{X}\not=\varnothing$, $\hat{X}$ is a union of $\mathtt{N}$-classes.  Let $x\in \hat{X}$. We want to show that if $v\in V$ is such that $N[x]=N[v]$, then $v\in  \hat{X}$. By definition of $\hat{X}$, we have that $x\in N[z]$, for all $z\in N[X]$. Thus, for every  $z\in N[X]$, we have  $z\in N[x]=N[v]$ and hence $v\in N[z]$, which gives $v\in  \hat{X}$.
\smallskip

	$(ii)$ Let $A \subseteq B\subseteq V$. We first show that $N[A] \supseteq N[B]$. If $A=\varnothing$, then $N[A]=V \supseteq N[B]$ holds. If $A\neq \varnothing$, then also $B \neq \varnothing,$ so that $N[A] =\bigcap_{x\in A}  N[x]\supseteq \bigcap_{x\in B}  N[x]=N[B].$
 Applying the established inequality to the subsets $N[A] \supseteq N[B]$ we now obtain $\hat{A}=N[N[A]] \subseteq N[N[B]]=\hat{B}.$
	
\smallskip

$(iii)$ We show first that $N[X]=N[\hat{X}]$. By $(i)$, we have $\hat{X}\supseteq X$ and hence, by $(ii)$, we get $N[X]\supseteq N[\hat{X}]$. Assume, by contradiction, that $N[X]\supsetneq N[\hat{X}]$. Then there exists $u\in N[X]\setminus N[\hat{X}]$. In particular, $N[X]\neq \varnothing$ and $N[\hat{X}]\neq V.$ Thus $N[\hat{X}]= \bigcap_{x\in \hat{X}}  N[x]$ and, by the fact that $u\notin N[\hat{X}]$,we deduce that there exists $\hat{x}\in \hat{X}$ such that $u\notin N[\hat{x}]$. On the other hand, since $N[X]\neq \varnothing$, we have that $\hat{X}= \bigcap_{x\in N[X]}  N[x]$. As a consequence, by $u\in N[X]$, we obtain $\hat{X}\subseteq N[u].$ Thus $\hat{x}\in N[u] $, that is, $u\in N[\hat{x}]$, a contradiction.

Now, we immediately  deduce also that  $\widehat{( \hat{X})}=N[N[\hat{X}]]=N[N[ X]]= \hat{X}.$
\smallskip

$(iv)$  Let $A, B\subseteq V$.  If one of $A$ and $B$ is empty, then the equality $N[A\cup B]=N[A]\cap N[B]$ trivially follows. Assume that they are both nonempty. Then also $A\cup B$ is nonempty and we have $$N[A\cup B]= \bigcap_{x\in A\cup B}  N[x]=\bigcap_{x\in A}  N[x]\cap \bigcap_{x\in B}  N[x]=N[A]\cap N[B].$$
As a consequence, by $(ii)$, we have
$$\widehat{A\cup B}=N[N[A\cup B]]=N[N[A]\cap N[B]]\supseteq N[N[A]]\cup N[N[B]]=\hat{A}\cup \hat{B}.$$

$(v)$ Let $y\in C.$ Then $C=[y]_\mathtt{N}\neq \varnothing$ so that, by the definition of the relation $\mathtt{N}$, we deduce
	$$N[C]=\bigcap_{x\in [y]_\mathtt{N}}  N[x]= N[y]\neq \varnothing.$$
	Since $y\in N[y]$, it  follows that $\hat{C}=N[N[y]]=\bigcap_{z \in N[y]} N[z] \subseteq N[y].$
	\end{proof}
Observe that, as an easy consequence of Proposition \ref{operatoreChiusura}, the neighbourhood  closure of a set of vertices is empty if and only if that set is empty and the graph admits no star. We also observe that the inclusion $\widehat{A\cup B}\supseteq \hat{A}\cup \hat{B}$ in Proposition \ref{operatoreChiusura}\,$(iv)$ is generally proper as the following example shows.
\begin{example}\label{ex1} {\rm Let $\Gamma=(V,E)$, where $V=[5]$ and $E=\{\{1,2\}, \{2,3\}, \{3,1\},\{3,4\}\}$. If $A\coloneqq\{1\}$ and $B\coloneqq\{5\}$, then it is immediately checked that $\widehat{A\cup B}=V$ and $\hat{A}\cup \hat{B}=V\setminus\{4\}\subsetneq \widehat{A\cup B}.$}
\end{example}

\noindent The next result is now an immediate consequence of Proposition \ref{operatoreChiusura}\,$(i)$-$(iii)$ and of Example \ref{ex1}.
\begin{corollary}\label{moore}  The neighbourhood closure operator is a Moore closure operator which is not, in general, a Kuratowski closure operator.
\end{corollary}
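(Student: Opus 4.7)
The plan is to observe that Corollary \ref{moore} is essentially a bookkeeping consequence of Proposition \ref{operatoreChiusura} together with Example \ref{ex1}, so the proof should be very short: verify the three defining axioms of a Moore closure by quoting the relevant items of the proposition, then invoke the example to show that the Kuratowski axiom of union-preservation fails in general.

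First I would dispatch the Moore conditions one by one. Extensivity $X\subseteq \hat{X}$ is part $(i)$ of Proposition \ref{operatoreChiusura}; isotonicity $A\subseteq B\Rightarrow \hat{A}\subseteq \hat{B}$ is the second half of part $(ii)$; idempotence $\widehat{(\hat{X})}=\hat{X}$ is the second half of part $(iii)$. These three together are exactly the definition of a Moore closure operator recalled at the top of Section \ref{Moore-sec}, so this half requires no further work.

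For the second assertion, I would display the graph of Example \ref{ex1}, where $A=\{1\}$ and $B=\{5\}$ give $\widehat{A\cup B}=V$ while $\hat{A}\cup \hat{B}=V\setminus\{4\}$. Since $\widehat{A\cup B}\neq \hat{A}\cup \hat{B}$, the neighbourhood closure operator fails to preserve binary unions in that graph, and hence is not a Kuratowski closure operator. (As an alternative witness one could also note that $\hat{\varnothing}=\mathcal{S}$ is nonempty whenever $\Gamma$ has a star vertex, so the empty-set axiom $c(\varnothing)=\varnothing$ can fail as well; but one counterexample is enough.)

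There is no real obstacle here: the theoretical content has already been absorbed into Proposition \ref{operatoreChiusura} and Example \ref{ex1}, and the corollary is merely packaging these facts in the language of closure operators. The only thing to be careful about is to state clearly that the failure of the Kuratowski property is exhibited by a concrete graph and is not claimed to hold for every graph, so the phrase ``in general'' in the statement is preserved.
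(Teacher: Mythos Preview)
Your proposal is correct and matches the paper's own proof essentially verbatim: the paper simply states that the corollary is an immediate consequence of Proposition~\ref{operatoreChiusura}\,$(i)$--$(iii)$ (giving extensivity, isotonicity, and idempotence) together with Example~\ref{ex1} (witnessing the failure of union-preservation). Your additional remark about $\hat{\varnothing}=\mathcal{S}$ is a nice bonus observation, but not needed.
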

Clearly there exist graphs for which the neighbourhood closure operator is a Kuratowski closure operator. For instance, this is the case for the totally disconnected graph on a set $V$. We set then the following problem. 
\begin{Prob}\label{prob1}
For which families of graphs is the neighbourhood closure operator also a Kuratowski closure operator?
\end{Prob}
Note that,  in order to respect the preservation of the empty set, we need $\mathcal{S}=\varnothing$. But this is not enough to guarantee that our operator is a Kuratowski closure operator. Indeed, as the Example \ref{ex1} shows, even when the star set is empty the preservation of binary unions is not necessarily satisfied.
A deep enquiry on those aspects is out of the scope of the present paper, but surely deserves future attention.

\section{Preliminary results on power graphs}	
	
We now come back to the focus of our research, that is, to power graphs. Let $G$ be a group and $x,y\in G$. We write $x\diamond y$ if $\langle x \rangle = \langle y \rangle$. Note that $x\diamond y$ if  and only if $x=y$ or both $(x,y)$ and $(y,x)$ are arcs of $\vec{\mathcal{P}}(G)$. 
 The relation $\diamond$ is clearly an equivalence relation in $G$. We denote the $\diamond$-class of  $x\in G$ by $[x]_{\diamond}$. Note that $[x]_{\diamond}$ is the set of generators of $\langle x \rangle$. In particular, $|[x]_{\diamond}|=\phi(o(x))$.
 It is easily checked that, if $\mathtt{N}$ is the closed twin relation of $\mathcal{P}(G)$, then
  $x\diamond y$ implies $x \mathtt{N}y$. In other words the relation $\diamond$ is a refinement of the relation $\mathtt{N}$. As a consequence an $\mathtt{N}$-class is a union of $\diamond$-classes. Moreover, it is immediately observed that, for every $x\in G$, we have 
$[x]_{\diamond}\subseteq [x]_{\mathtt{N}}\subseteq N_{\mathcal{P}(G)}[x]$ and $\langle x \rangle\subseteq  N_{\mathcal{P}(G)}[x].$
In particular, two distinct elements of $G$ in the same $\mathtt{N}$-class are joined  and $\mathcal{P}(G)_{[x]_{\mathtt{N}}}$ is a complete graph. Note that $[1]_{\mathtt{N}}=\mathcal{S}_{\mathcal{P}(G)}$. For this reason, we call the $\mathtt{N}$-class of the identity element of $G$ the \emph{star class}.
	When the star class is not equal to $\{1\}$, there are well-known consequences for the structure of the group and the nature of the star class itself.
	\begin{prop}{\em \cite[Proposition 4]{Cameron_2}}
		\label{S>1}
		Let $G$ be a group with $|G|=n$ such that $|\mathcal{S}_{\mathcal{P}(G)}|>1$. Then one of the following facts holds:
		\begin{itemize}
			\item[\rm (a)] $G$ is a cyclic $p$-group and $\mathcal{S}_{\mathcal{P}(G)}=G$;
			\item[\rm (b)] $G$ is cyclic, not a $p$-group, and $\mathcal{S}_{\mathcal{P}(G)}$ consists of $1$ and  of the generators of $G$, so that $|\mathcal{S}_{\mathcal{P}(G)}|=1 + \phi(n)$;
			\item[\rm (c)] $G$ is a generalized quaternion $2$-group and $\mathcal{S}_{\mathcal{P}(G)}$ contains $1$ and the unique involution of $G$, so that $|\mathcal{S}_{\mathcal{P}(G)}|=2$.
		\end{itemize}
In particular, $\mathcal{P}(G)$ is a complete graph if and only if $G\cong C_{p^m}$ for some prime number $p$ and some $m\in \mathbb{N}_0.$ 
	\end{prop}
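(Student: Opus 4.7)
The plan is to analyse the structure of $G$ forced by the existence of a non-identity star vertex. The basic observation is that $x\in \mathcal{S}_{\mathcal{P}(G)}$ if and only if, for every $y\in G$, either $y\in\langle x\rangle$ or $x\in\langle y\rangle$, which is in turn equivalent to saying that $\langle x\rangle$ is contained in every maximal cyclic subgroup of $G$. Fix $x\in \mathcal{S}_{\mathcal{P}(G)}\setminus\{1\}$ and set $k:=o(x)$. From the star condition I first extract a preliminary lemma: every prime $p$ dividing $n$ also divides $k$, and $G$ has a unique subgroup of order $p$, namely $\langle x^{k/p}\rangle$. Indeed, given any $y\in G$ of prime order $p$, the star condition forces either $y\in\langle x\rangle$ (so $p\mid k$ and $y$ lies in the unique order-$p$ subgroup of the cyclic group $\langle x\rangle$) or $x\in\langle y\rangle$ (forcing $k=p$ and $\langle y\rangle=\langle x\rangle$).

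I then split according to whether $G$ is cyclic. If $G=\langle x\rangle$ and $n$ is a prime power, the subgroup lattice of $G$ is a chain, so any two elements of $G$ are joined in $\mathcal{P}(G)$ and every element is a star vertex, giving $(a)$. If instead $G$ is cyclic but $n$ is not a prime power, a direct analysis of the divisor lattice of $n$ shows that for any divisor $m$ of $n$ with $1<m<n$ one can find a divisor $d$ of $n$ incomparable with $m$: take a prime factor of $n$ not dividing $m$ if such exists, or, if every prime of $n$ divides $m$, take $d=p^{a_p}$ for a prime $p$ whose exponent in $n$ strictly exceeds its exponent in $m$. Since $G$ is cyclic, it has a unique subgroup of each such order, so the element of order $m$ fails to be joined to one of order $d$ and cannot be a star vertex. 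This leaves exactly $1$ and the $\phi(n)$ generators of $G$, giving $(b)$.

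The main obstacle is the non-cyclic case, in which I expect to force $G$ to be a generalized quaternion $2$-group. For each prime $p\mid n$, uniqueness of the order-$p$ subgroup in $G$ descends to any Sylow $p$-subgroup $P$, so by Burnside's classical theorem on $p$-groups with a unique subgroup of order $p$, $P$ is cyclic (for $p$ odd) or cyclic or generalized quaternion (for $p=2$). I first show $n$ is a prime power by contradiction: fix distinct primes $p\neq q$ dividing $n$ (hence both dividing $k$) and a Sylow $p$-subgroup $P$. If $P$ is cyclic with generator $u$, the dichotomy $u\in\langle x\rangle$ or $x\in\langle u\rangle$ cannot realise the latter since $q\mid k$ but $q\nmid |P|$, so $P=\langle u\rangle\subseteq\langle x\rangle$. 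If $P$ is generalized quaternion ($p=2$), the same argument applied to a generator $c$ of its unique cyclic maximal subgroup and to any element $v$ of order $4$ outside $\langle c\rangle$ places both inside $\langle x\rangle$, whence $P=\langle c,v\rangle\subseteq\langle x\rangle$, contradicting that $\langle x\rangle$ is abelian while $P$ is not. So every Sylow subgroup of $G$ is cyclic and contained in $\langle x\rangle$; since $\langle x\rangle$ admits a unique Sylow $p$-subgroup for each $p$, so does $G$, and the product of all Sylows, which equals $G$, coincides with $\langle x\rangle$, forcing $G=\langle x\rangle$, a contradiction. Hence $n=p^a$ is a prime power, and as $G$ is non-cyclic with a unique subgroup of order $p$, Burnside's theorem yields $p=2$ and $G$ generalized quaternion. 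A direct inspection then shows that in such a group of order $\geq 8$ any element of order $>2$ in the cyclic maximal subgroup of index $2$ fails to be joined to an element of order $4$ outside it, so $\mathcal{S}_{\mathcal{P}(G)}$ reduces to $\{1\}$ together with the unique involution, giving $(c)$.

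Finally, $\mathcal{P}(G)$ is complete if and only if $\mathcal{S}_{\mathcal{P}(G)}=G$. Among $(a)$--$(c)$ only $(a)$ satisfies this, since in $(b)$ one has $1+\phi(n)<n$ (as $n$ has two distinct prime factors) and in $(c)$ one has $|G|\geq 8>2$; including the trivial case $|G|=1=p^0$, this is equivalent to $G\cong C_{p^n}$ for some prime $p$ and some $n\in\mathbb{N}_0$.
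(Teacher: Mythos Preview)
The paper does not supply its own proof of this proposition: it is quoted verbatim from \cite{Cameron_2} and used as a black box. Your argument is therefore not competing with anything in the present paper, and it is essentially correct and self-contained.

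Two small points are worth tightening. First, in the cyclic case you write ``If $G=\langle x\rangle$'' after having fixed a specific $x\in\mathcal{S}\setminus\{1\}$; in case~(b) this $x$ need not generate $G$, so it would be cleaner to run the divisor-lattice argument with an arbitrary generator of $G$ rather than with $x$. Second, in the generalized quaternion case your ``direct inspection'' only rules out elements of order $>2$ lying in the index-$2$ cyclic subgroup $\langle a\rangle$; you should also note that any element $w$ outside $\langle a\rangle$ has order $4$ and $\langle w\rangle\cap\langle a\rangle=\{1,z\}$ with $z$ the central involution, so $w$ is not joined to $a$ whenever $o(a)\geq 4$, hence $w\notin\mathcal{S}$ either. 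With these two cosmetic fixes the proof is complete.
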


We now present a useful auxiliary result. For simplicity, from now on, when a single group $G$ is under consideration, we completely omit all the subscripts $\mathcal{P}(G)$ in our notation and assume that $\mathtt{N}$ is the closed twin relation of $\mathcal{P}(G)$.
\begin{lemma}\label{NdipotenzeDip} 
Let $G$ be a group and $x,y\in G$ be elements having orders that are powers of the same prime number $p$, and such that $o(x)\leq o(y)$. Then $N[x]\supseteq N[y]$ if and only if $x$ is a power of $y$.
\end{lemma}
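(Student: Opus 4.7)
The plan is to leverage Proposition \ref{S>1}(a), which guarantees that the power graph of any cyclic $p$-group is complete; this is precisely the ingredient needed for the nontrivial direction.

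First, for the ``if'' direction, I assume $x = y^k$ for some positive integer $k$, pick an arbitrary $z \in N[y]$, and aim to show $z \in N[x]$. I split into cases according to why $z \in N[y]$. If $z = y$, the conclusion is immediate since $x$ is a power of $z$. If $z = y^m$ for some positive $m$, then both $x$ and $z$ lie in the cyclic $p$-group $\langle y \rangle$, whose power graph is complete by Proposition \ref{S>1}(a), so $z \in N[x]$. If instead $y = z^m$, then $\langle x \rangle \subseteq \langle y \rangle \subseteq \langle z \rangle$, so $x$ is a power of $z$ and again $z \in N[x]$.

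For the ``only if'' direction, I assume $N[x] \supseteq N[y]$. Since $y \in N[y]$, we have $y \in N[x]$, so either $y = x$ (in which case $x$ is trivially a power of $y$) or $\{x, y\} \in E$. In the latter case, either $x = y^m$ for some positive $m$, which gives the conclusion directly, or $y = x^m$ for some positive $m$. The subcase $y = x^m$ implies $o(y) \mid o(x)$ and hence $o(y) \leq o(x)$; combined with the hypothesis $o(x) \leq o(y)$ this forces $o(x) = o(y)$, so $\langle y \rangle = \langle x^m \rangle = \langle x \rangle$, and $x$ is a power of $y$.

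The main (and in fact only) delicate point is the middle subcase of the forward direction, where two arbitrary nonidentity powers of $y$ must be joined in $\mathcal{P}(G)$. This is exactly where the $p$-power hypothesis is essential: if $\langle y \rangle$ were not a $p$-group, its power graph would not be complete (e.g.\ the order-$2$ and order-$3$ elements of $C_6$ are not joined), and the forward implication would fail. Everything else reduces to routine case checking and elementary properties of cyclic subgroups.
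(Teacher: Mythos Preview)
Your proof is correct and follows essentially the same approach as the paper: both directions hinge on the same case splits, and the key step in the ``if'' direction---that two elements of the cyclic $p$-group $\langle y\rangle$ are always joined---is handled in both by appealing to the completeness of $\mathcal{P}(\langle y\rangle)$. Your ``only if'' argument is a slightly more explicit version of the paper's, which simply observes that $y\in N[x]$ forces $\langle x\rangle\leq\langle y\rangle$ or $\langle y\rangle\leq\langle x\rangle$ and then uses $o(x)\leq o(y)$ to select the first alternative.
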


\begin{proof}  Assume that $N[x]\supseteq N[y]$. Then $y\in N[x]$, which implies $\langle x\rangle \leq \langle y\rangle$ or $\langle y\rangle \leq \langle x\rangle$. Since $o(x)\leq o(y)$, we then have $\langle x\rangle \leq \langle y\rangle$ so that $x$ is a power of $y$.
Conversely,  assume that  $x$ is a power of $y$. Pick $z\in N[y]$. If $y$ is a power of $z$, then $x$ is a power of $z$ too. If $z$ is a power of $y$, then $x$ and $z$  belong to the cyclic $p$-group generated by $y$. Hence, since $\mathcal{P}(\langle y \rangle)$ is complete, we have $z\in N[x].$
\end{proof}
	
	\section{$\mathtt{N}$-classes in  power graphs}

	We have observed that an $\mathtt{N}$-class in a power graph is a union of $\diamond$-classes. Hence we can sensibly distinguish two types of $\mathtt{N}$-classes.
	\begin{definition}
		\label{NplainType}\rm Let $G$ be a group and $C$ be an $\mathtt{N}$-class. We say that $C$ is a \emph{class of plain type} if 
		$C$ is a single  $\diamond$-class; $C$ is a \emph{class of compound type} if $C$ is the union of at least two $\diamond$-classes.
	\end{definition}

Those classes are called, in \cite{Cameron_2}, class of type $(a)$ and $(b)$ respectively.
The classes of plain type are easily characterized by the order of their elements.

\begin{lemma}\label{ord-plain} 
Let $G$ be a group and $C$ be an $\mathtt{N}$-class. $C$ is of plain type if and only if 
 the elements in $C$ have the same order. 
\end{lemma}
\begin{proof}  Let $C$ be of plain type. If $x,y\in C$, then we have $\langle x\rangle=\langle y\rangle$ and thus $o(x)=o(y).$ Assume, conversely, that  the elements in $C$ have the same order. Let $C=[x]_{\mathtt{N}},$ for some $x\in G.$ Since we know that $[x]_{\diamond}\subseteq [x]_{\mathtt{N}},$ we need only to show that $[x]_{\mathtt{N}}\subseteq [x]_{\diamond}$.
Let $y\in [x]_{\mathtt{N}}$. Then $\{x,y\}\in E$ and $o(x)=o(y)$.
Then we deduce $y\diamond x$. 
\end{proof}

We observe that a class of plain type does not have restriction on the order of its elements. Indeed, consider $G\coloneqq \langle a \rangle \times \langle b \rangle$ with $o(a)=2$, $o(b)=k\in \mathbb{N}$. It is easily checked that $[b]_{\mathtt{N}}$ is a class of plain type whose elements have order $k.$

Thanks to Proposition \ref{S>1} and Lemma \ref{ord-plain}, we observe that $\mathcal{S}$ is of compound type if and only if $\mathcal{S}\not=\{1\}.$
We now characterize the $\mathtt{N}$-classes of $G$ different from $\mathcal{S}$ and of compound type. Our  result is very little more than 
 \cite[Proposition 5]{Cameron_2}. We give full details for two reasons. First, some unspecified conditions in  \cite{Cameron_2} are presumably at the origin of a gap that we are going to correct later (see the comments to Proposition \ref{CarachetisationN-classes_2}). Second, we need to clearly introduce some notation for the sequel.
	
	\begin{prop}{\rm \cite[Proposition 5]{Cameron_2}}
		\label{propC_y}
		Let $G$ be a group and $C$ be an $\mathtt{N}$-class of $G$, with $C\neq \mathcal{S}$. The following facts are equivalent:
		\begin{enumerate}
		\item[$(i)$] $C$ is a compound-class;
		\item[$(ii)$] If $y$ is an element of maximum order in $C$, then $o(y)=p^r$ for some prime number $p$ and some integer $r \geq 2$. Moreover, there exists $s\in [r-2]_{0} $ such that $$C= \left\lbrace z \in \langle y\rangle \, |\, p^{s+1}\leq o(z)\leq p^r \right\rbrace. $$ 
		\end{enumerate} 	
		In particular the number of $\diamond$-classes into which a compound $\mathtt{N}$-class $C$ splits is $r-s \geq 2$. The orders of the elements in those $\diamond$-classes are given by $p^{s+1},p^{s+2}, \dots, p^r$ and the sizes of those $\diamond$-classes are $\phi(p^{s+1}), \phi(p^{s+2}),\dots, \phi(p^r)$ respectively.
		The ordered list $(p,r,s)$ is uniquely determined by $C$.
			
	\end{prop}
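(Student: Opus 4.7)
The plan is to handle the easy direction (ii)$\Rightarrow$(i) first, then tackle (i)$\Rightarrow$(ii) in three substeps, with the main obstacle being the identification of $o(y)$ as a prime power by a reduction to Proposition \ref{S>1} applied inside the induced subgraph on $\langle y\rangle$.

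The direction (ii)$\Rightarrow$(i) is immediate: under (ii), $C$ contains elements of orders $p^{s+1}$ and $p^r$ with $s+1<r$, so $C$ contains elements of at least two distinct orders and by Remark \ref{ord-plain} cannot be of plain type.

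For (i)$\Rightarrow$(ii), I would fix $y\in C$ of maximum order and first show $C\subseteq\langle y\rangle$: given $z\in C$ with $z\neq y$, we have $z\mathtt{N}y$ so $\{z,y\}\in E$ by \eqref{easy}, hence one is a power of the other, and the maximality of $o(y)$ forces $z$ to be a power of $y$. Next, I would show $o(y)=p^r$ for some prime $p$. Since $C$ is compound, Remark \ref{ord-plain} produces $x\in C$ with $o(x)<o(y)$ and therefore $x\in\langle y\rangle$. Because $y$ generates $\langle y\rangle$, we have $N[y]\cap\langle y\rangle=\langle y\rangle$; the global equality $N[x]=N[y]$ then descends to $N_{\mathcal{P}(\langle y\rangle)}[x]=\langle y\rangle$, so $x$ is a star vertex of $\mathcal{P}(\langle y\rangle)$. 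If $\langle y\rangle$ were not a cyclic $p$-group, Proposition \ref{S>1} would force $x=1$ (giving $C=\mathcal{S}$, contradiction) or $x$ a generator of $\langle y\rangle$ (giving $o(x)=o(y)$, contradiction). Hence $o(y)=p^r$, and $r\geq 2$ because $r=1$ would make all elements of $\langle y\rangle\setminus\{1\}$ of order $p$, leaving $C$ of plain type by Remark \ref{ord-plain}.

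To finish, I would define $p^{s+1}$ as the minimum order occurring in $C$ (well-defined since $C\neq\mathcal{S}$, and $s\in[r-2]_0$ because $C$ being compound forces at least two distinct orders, hence $s+1<r$). The inclusion $C\subseteq\{z\in\langle y\rangle:p^{s+1}\leq o(z)\leq p^r\}$ follows by construction. For the reverse inclusion, I would pick $w\in\langle y\rangle$ with $p^{s+1}\leq o(w)\leq p^r$ and an $x_0\in C$ of minimum order $p^{s+1}$. Since $\langle y\rangle$ is cyclic of $p$-power order, $x_0$ lies in its unique subgroup of order $p^{s+1}$, which sits inside $\langle w\rangle$; so $x_0$ is a power of $w$, while $w$ is a power of $y$. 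Two applications of Lemma \ref{NdipotenzeDip} give $N[y]\supseteq N[w]\supseteq N[x_0]=N[y]$, whence $w\in[y]_{\mathtt{N}}=C$. The remaining assertions then fall out immediately: in $\langle y\rangle\cong C_{p^r}$ the elements of order $p^i$ form exactly the set of generators of the unique subgroup of order $p^i$, hence a single $\diamond$-class of cardinality $\phi(p^i)$, so stratifying $C$ by element order gives $r-s$ $\diamond$-classes of the claimed sizes; and $(p,r,s)$ is determined by $C$ since $p^r$ is the maximum and $p^{s+1}$ the minimum order of the elements of $C$. The subtle step is the prime-power reduction above, where care is needed to convert the hypothesis $N[x]=N[y]$ (global in $G$) into a statement about the restricted power graph of $\langle y\rangle$ before Proposition \ref{S>1} can be applied.
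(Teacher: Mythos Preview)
Your proposal is correct and the overall structure---reduce to $C\subseteq\langle y\rangle$, pin down $o(y)$ as a prime power, then sandwich via Lemma~\ref{NdipotenzeDip}---matches the paper. The one genuine difference is the prime-power step. The paper argues directly: given $x,y\in C$ with $o(x)=k<kl=o(y)$, it assumes a prime $p\mid k$ and a different prime $q\mid l$, builds $z\in\langle y\rangle$ of order $(k/p)q$, and checks that $z\in N[y]\setminus N[x]$, contradicting $N[x]=N[y]$. You instead pass to the induced power graph $\mathcal{P}(\langle y\rangle)$, observe that $x$ becomes a star vertex there, and invoke Proposition~\ref{S>1} for the cyclic group $\langle y\rangle$ (where case~(c) is automatically excluded) to force it to be a $p$-group. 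Your route is shorter and recycles existing machinery; the paper's is self-contained and purely arithmetical. Both are fine.

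One small slip: in your final sandwich the inclusions are reversed. Lemma~\ref{NdipotenzeDip} gives the larger closed neighbourhood to the element of \emph{smaller} order, so from $x_0\in\langle w\rangle\leq\langle y\rangle$ you get $N[x_0]\supseteq N[w]\supseteq N[y]$, and then $N[x_0]=N[y]$ forces equality throughout. The conclusion $w\in C$ is unchanged.
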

	
	\begin{proof} $(i)\Rightarrow (ii)$ Let $C$  be a compound-class. We first claim that if $x,y\in C$ are two  elements of distinct order, then those 
	orders are suitable powers, with positive integer exponent, of the same prime. Let $x,y\in C$ with $o(x)\neq o(y)$. Then, renaming if necessary, we have 	
	 $o(x)<o(y)$. In particular, $x\neq y$. Moreover, both $x$ and $y$ are different from $1$ as $C \not= \mathcal{S}=[1]_{\mathtt{N}}$.
	Since  $N[x]=N[y]$, $x$ and $y$ are joined and $o(x)$ is a proper divisor of $o(y)$. Let then $k\coloneqq o(x)$ and $kl\coloneqq o(y)$ for some $k,l\geq 2$. We show that $k$ and $l$ are power of the same prime. It suffices to show that if $p$ is a prime dividing $k$, then the only prime dividing $l$ is $p$ itself. 		
		Assume, by contradiction,  that  we have a  prime $p\mid k$ and a prime $q\neq p$ such that $q\mid l.$ Then $(kq)/p=(k/p)q\mid kl=o(y)$ and thus there exists an element $z\in \langle y\rangle$ with $o(z)=(k/p)q$. Now $z\in N[y]=N[x]$. But  $o(z)\nmid o(x)$ and  $o(x)\nmid o(z)$. Indeed assume that $o(z)\mid o(x)$. Then $(k/p)q\mid k=p(k/p)$ which implies the contradiction $q\mid p$. Assume next $o(x)\mid o(z)$. Then $k\mid (k/p)q$ and thus $p(k/p)\mid q(k/p)$ which implies the contradiction $p\mid q.$
	
		
 Choose now $x\in C$ such that $o(x)$ is minimum in $C$ and $y\in C$ such that $o(y)$ is maximum in $C$. Since $C$ is compound, by Lemma \ref{ord-plain}, 
those orders are distinct. Moreover, since $C\neq \mathcal{S}$, those orders are different from $1$. Hence we have $o(x)=p^{s+1}$ for some integer $s\geq 0$ and $o(y)=p^{r}$ for some integer $r\geq s+2$. Pick $z\in C$. Since $z, y\in C$,  $y$ and $z$ are joined and, since $o(z)\leq o(y)$, we have that $z$ is a power of $y$. Moreover $o(z)\geq p^{s+1}$. This shows that $$C\subseteq \{z\in \langle y\rangle: p^{s+1}\leq o(z)\leq p^r\}.$$ 
		Let next $z\in \langle y\rangle$ with $ o(z)\geq p^{s+1}=o(x)$. Then $z\in N[y]=N[x]$ so that $z=x\in C$ or $z$ and $x$ are joined. In this last case, we necessarily have that $x$ is power of $z$. Moreover, $x$ and $z$ are of prime power order so that, by Lemma \ref{NdipotenzeDip}, we obtain $N[x]\supseteq N[z]\supseteq N[y]=N[x]$ and thus $N[x]= N[z]$, so that $z\in C.$ Thus $C= \{z\in \langle y\rangle: p^{s+1}\leq o(z)\leq p^r\}.$
\smallskip
		
 $(ii)\Rightarrow (i)$ In $C$ there exist elements with different order. Thus, by Lemma \ref{ord-plain}, $C$ cannot be plain and hence it is compound.
	\end{proof}
\begin{definition}\label{DefCompound}{\rm If $C\neq \mathcal{S}$ is an $\mathtt{N}$-class  of compound type for a group $G$, then we call an element $y\in C$ of maximum order a \emph{root} of $C$. Moreover, we call the ordered list
$(p,r,s),$ described in Proposition \ref{propC_y}, the \emph{parameters of} $C$. \\ Recall that $p$ is a prime number, $r$  and $s$ are integer with $r\geq 2$ and $ s\in [r-2]_0.$}
 \end{definition}
 
\subsection{Examples of classes of compound type}	

In  \cite[Proposition 2]{Cameron_1}, Cameron and Ghosh prove, translated within our language, that if $C$ is a compound class in an abelian group $G$, then $C=\mathcal{S}$.
This is not true, in general, for non-abelian groups, as shown by the following example.
	
	\begin{example}\label{ex-comp}
	{\rm The $\mathtt{N}$-classes of the elements of order $4$ in $S_4$ are of compound type with parameters $(p,r,s)=(2,2,0)$. For instance, $[(1234)]_{\mathtt{N}}= \{ (1234), (1432), (13)(24) \}$ is the union of the two $\diamond$-classes $[(1234)]_{\diamond}= \{ (1234), (1432)\}$ and $[(13)(24)]_{\diamond}= \{ (13)(24) \}$.}
	\end{example}
	The above example shows that $s=0$ can occur as a parameter of a compound class. That possibility,  wrongly denied in \cite{Cameron_2}, is not sporadic at all. We can  indeed easily construct an infinite family of groups admitting
	$\mathtt{N}$-classes of compound type with $s=0$. Recall that the dihedral group $D_{2n}$ of order $2n$, for $n\geq 2$, is defined by 
	\begin{equation}\label{diedrale}
	 D_{2n}\coloneqq\langle a,b \, |\ a^{n}=1=b^2,\  b^{-1}ab=a^{-1} \rangle.
	\end{equation}
	
Consider now the choice $n=p^r$ for $p$ a prime and $r\in \mathbb{N}$, with $r\geq 2$. It is easily checked that $\mathcal{P}(D_{2p^r})$ is composed by the complete graph $K_{p^r}$ on the vertices in $\langle a\rangle$ and $p^r$ further vertices given by the involutions in $D_{2n}\setminus \langle a\rangle$ joined only with $1$. Therefore we have that $[a]_{\mathtt{N}}=\langle a \rangle \setminus \{1\}$ is of compound type, since it contains elements of different orders. Moreover,  its parameters are $(p,r,0)$. 

The construction of an infinite family of groups admitting
		$\mathtt{N}$-classes of compound type with $s=1$ is a bit more tricky. Consider, for $n\geq 4$, the quasidihedral group of order $2^n$ defined by
		$$QD_{2^n}\coloneqq \langle a,b\,|\ a^{2^{n-1}}= b^2=1,\  b^{-1}ab = a^{-1+2^{n-2}}\rangle$$
		(see, for instance, \cite[Satz I.14.9]{hup}).
		Then it is easily checked that its proper power graph $\mathcal{P}^*(QD_{2^n})$ is composed by a clique of order $2^{n-1}-1$ given by the graph induced by $\langle a \rangle$, $2^{n-3}$ triangles having the vertex $a^{2^{n-2}}$ in common and all the $2^{n-2}$ remaining vertices isolated. Then the class $[a]_{\mathtt{N}}$ is compound with parameters $(2,n-1, 1)$. In Figure \ref{PPG_quasidiedrale} it is shown the case $n=4$.

	\begin{figure}
		\centering
		\begin{tikzpicture}[line cap=round,line join=round,>=triangle 45,x=1.0cm,y=1.0cm]
			\clip(5.,-5.) rectangle (11.25,4.);
			\draw [line width=1.5pt] (5.5,1.5)-- (7.716485217533858,3.111762854064654);
			\draw [line width=1.5pt] (7.716485217533858,3.111762854064654)-- (8.,0.38128626753482253);
			\draw [line width=1.5pt] (8.,0.38128626753482253)-- (5.5,1.5);
			\draw [line width=1.5pt] (8.28116548812265,3.1057066432285088)-- (8.,0.38128626753482253);
			\draw [line width=1.5pt] (8.28116548812265,3.1057066432285088)-- (10.5,1.5);
			\draw [line width=1.5pt] (10.5,1.5)-- (8.,0.38128626753482253);
			\draw [line width=1.5pt] (6.198062264195162,-0.48648121070029315)-- (9.801937735804838,-0.4864812107002938);
			\draw [line width=1.5pt] (6.198062264195162,-0.48648121070029315)-- (10.246979603717467,-2.436337035063941);
			\draw [line width=1.5pt] (6.198062264195162,-0.48648121070029315)-- (9.,-4.);
			\draw [line width=1.5pt] (6.198062264195162,-0.48648121070029315)-- (7.,-4.);
			\draw [line width=1.5pt] (5.753020396282533,-2.43633703506394)-- (9.801937735804838,-0.4864812107002938);
			\draw [line width=1.5pt] (5.753020396282533,-2.43633703506394)-- (8.,0.38128626753482253);
			\draw [line width=1.5pt] (5.753020396282533,-2.43633703506394)-- (10.246979603717467,-2.436337035063941);
			\draw [line width=1.5pt] (5.753020396282533,-2.43633703506394)-- (9.,-4.);
			\draw [line width=1.5pt] (8.,0.38128626753482253)-- (10.246979603717467,-2.436337035063941);
			\draw [line width=1.5pt] (8.,0.38128626753482253)-- (9.,-4.);
			\draw [line width=1.5pt] (8.,0.38128626753482253)-- (7.,-4.);
			\draw [line width=1.5pt] (9.801937735804838,-0.4864812107002938)-- (9.,-4.);
			\draw [line width=1.5pt] (9.801937735804838,-0.4864812107002938)-- (7.,-4.);
			\draw [line width=1.5pt] (10.246979603717467,-2.436337035063941)-- (7.,-4.);
			\draw [line width=1.5pt] (8.,0.38128626753482253)-- (6.198062264195162,-0.48648121070029315);
			\draw [line width=1.5pt] (6.198062264195162,-0.48648121070029315)-- (5.753020396282533,-2.43633703506394);
			\draw [line width=1.5pt] (5.753020396282533,-2.43633703506394)-- (7.,-4.);
			\draw [line width=1.5pt] (7.,-4.)-- (9.,-4.);
			\draw [line width=1.5pt] (9.,-4.)-- (10.246979603717467,-2.436337035063941);
			\draw [line width=1.5pt] (10.246979603717467,-2.436337035063941)-- (9.801937735804838,-0.4864812107002938);
			\draw [line width=1.5pt] (9.801937735804838,-0.4864812107002938)-- (8.,0.38128626753482253);
			\begin{scriptsize}
				\draw [fill=celesteTol] (7.,-4.) circle (2.5pt) 
				node[anchor=north east] {$a$};
				\draw [fill=celesteTol] (9.,-4.) circle (2.5pt) node[anchor=north west] {$a^7$};
				\draw [fill=celesteTol] (10.246979603717467,-2.436337035063941) circle (2.5pt) node[anchor=north west] {$a^6$};
				\draw [fill=celesteTol] (9.801937735804838,-0.4864812107002938) circle (2.5pt) node[anchor=south west] {$a^5$};
				\draw [fill=gialloTol] (8.,0.38128626753482253) circle (2.5pt) node[anchor=east] {$a^4\quad$};
				\draw [fill=celesteTol] (6.198062264195162,-0.48648121070029315) circle (2.5pt) node[anchor=south east] {$a^3$};
				\draw [fill=celesteTol] (5.753020396282533,-2.43633703506394) circle (2.5pt) node[anchor=east] {$a^2$};
				\draw [fill=rosaTol] (10.5,1.5) circle (2.5pt) node[anchor=north west] {$a^7b$};
				\draw [fill=rosaTol] (8.28116548812265,3.1057066432285088) circle (2.5pt) node[anchor=south west] {$a^3b$};
				\draw [fill=verdeacquaTol] (7.716485217533858,3.111762854064654) circle(2.5pt) node[anchor=south] {$a^5b$};
				\draw [fill=verdeacquaTol] (5.5,1.5) circle(2.5pt) node[anchor=east] {$ab$};
				\draw [fill=bluTol]
				(6.393600271878011,2.59961904885862) circle(2.5pt) node[anchor=south east] {$a^2b$};
				\draw [fill=verdeTol] (9.605706643228508,2.6001207794121743) circle (2.5pt) node[anchor=north west] {$a^4b$};
				\draw [fill=violaTol] (10.724420375693686,0.10012077941217123) circle (2.5pt) node[anchor=north west] {$a^6b$};
				\draw [fill=rossoTol]
				(5.275579624306314,0.1001207794121709) circle(2.5pt) node[anchor=north east] {$b$};;
				
			\end{scriptsize}
		\end{tikzpicture}
	 \caption{$\mathcal{P}^*(QD_{16})$: vertices with different colours are in distinct $\mathtt{N}$-classes.}
	 \label{PPG_quasidiedrale}
	\end{figure}
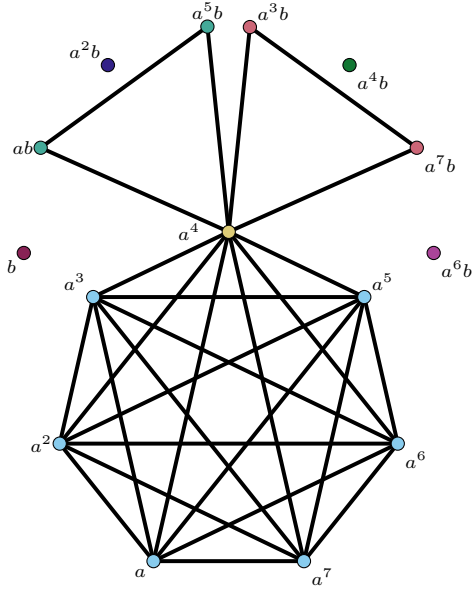

\subsection{Critical classes}\label{sect-crit-class}
	The Moore closure operator defined in Section \ref{Moore-sec} is a very useful tool for the study of $\mathtt{N}$-classes. 
	The proposition below  is essentially extracted from \cite{Cameron_2}. We state it formally and  prove it making use of the neighbourhood closure  operator.	
	\begin{prop}\label{CarachetisationN-classes_1}
		Let $G$ be a group and $C\neq \mathcal{S}$ be an $\mathtt{N}$-class of compound type, root $y \in G$ and parameters $(p,r,s)$. Then  $|C|=p^r-p^s$ and $\hat{C}=\langle y \rangle $. In particular, $|\hat{C}|=p^r$. 
	\end{prop}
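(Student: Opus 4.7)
For the cardinality, I plan to read off $|C| = p^r - p^s$ directly from the explicit description of $C$ supplied by Proposition \ref{propC_y}: $C = \{z \in \langle y\rangle : p^{s+1}\leq o(z)\leq p^r\}$ is exactly $\langle y\rangle$ with its unique subgroup of order $p^s$ removed, namely $\langle y^{p^{r-s}}\rangle$, so a count gives the result.

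For the identity $\hat{C} = \langle y\rangle$, I will work throughout with the characterisation supplied by Proposition \ref{operatoreChiusura}(vi), namely $\hat{C} = \bigcap_{z \in N[y]} N[z]$, equivalently $w \in \hat{C}$ iff $N[y] \subseteq N[w]$. The forward inclusion $\langle y\rangle \subseteq \hat{C}$ will be the first step and is routine: for $w \in \langle y\rangle$ and $z \in N[y]$, either $z \in \langle y\rangle$ and cyclicity of the $p$-group $\langle y\rangle$ joins $w$ and $z$, or $z \notin \langle y\rangle$ forces $\langle y\rangle \leq \langle z\rangle$ and then $w \in \langle z\rangle$, so $w$ and $z$ are again joined. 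The same proposition also gives $\hat{C}\subseteq N[y]$, which I shall use below.

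The substantive task is the reverse inclusion $\hat{C} \subseteq \langle y\rangle$, for which I plan to argue by contradiction: suppose $v \in \hat{C}\setminus\langle y\rangle$. Then $v \in N[y]\setminus\langle y\rangle$ forces $\langle y\rangle \lneq \langle v\rangle$ and $o(v) > p^r$. I split the analysis according to whether $o(v)$ is a pure power of $p$. Case (i): $o(v) = p^{r'}$ with $r' > r$. I aim to show $N[v] = N[y]$, which will contradict the maximality of $o(y)$ in $C = [y]_{\mathtt{N}}$. One inclusion, $N[y] \subseteq N[v]$, is precisely the hypothesis $v \in \hat{C}$; for the other, I take $z \in N[v]$ and argue: if $v \in \langle z\rangle$ then $\langle y\rangle \leq \langle v\rangle \leq \langle z\rangle$, placing $z$ in $N[y]$, while if $z \in \langle v\rangle$ I use that $\langle v\rangle$, being a cyclic $p$-group, has a totally ordered lattice of subgroups, with $\langle y\rangle$ the unique subgroup of order $p^r$; hence $\langle z\rangle$ and $\langle y\rangle$ are comparable and again $z \in N[y]$. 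Thus $v \in [y]_{\mathtt{N}} = C$, contradicting $o(v) > p^r$.

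Case (ii): $o(v)$ has a prime divisor $q \neq p$. This is where I foresee the main delicacy, because I must exploit that $C$ is compound, i.e.\ $s \leq r-2$. The idea is that $u := y^{p^{r-s-1}} \in C$ has order $p^{s+1} < p^r$ and therefore satisfies $N[u] = N[y]$, providing a companion to play off. I then choose $v_2 \in \langle v\rangle$ of order $q$ and set $w := uv_2 \in \langle v\rangle$. A direct check will yield $o(w) = p^{s+1}q$ and $u \in \langle w\rangle$ (since $w^q = u^q$ and $\gcd(q, p^{s+1}) = 1$ make $w^q$ a generator of $\langle u\rangle$), so $w \in N[u]$. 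On the other hand $w \notin N[y]$: $w \in \langle y\rangle$ is excluded by $q \nmid p^r$, and $y \in \langle w\rangle$ is excluded by $p^{r-s-1} \nmid q$ (using $r-s-1 \geq 1$ and $q$ coprime to $p$), so that $p^r \nmid p^{s+1}q$. This contradicts $N[u] = N[y]$ and closes the case. The main conceptual obstacle is recognising precisely where compoundness enters: not in Case (i), which rests only on the maximality of $y$ in $C$, but in Case (ii), via the existence of the smaller-order companion $u \in C$ with $N[u] = N[y]$ that enables the witness $w$ to be constructed.
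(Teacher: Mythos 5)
Your proposal is correct and follows essentially the same route as the paper: the cardinality is read off from Proposition \ref{propC_y}, the inclusion $\langle y\rangle\subseteq\hat{C}$ comes from Proposition \ref{operatoreChiusura}\,$(vi)$ together with the completeness of the power graph of a cyclic $p$-group, and the reverse inclusion is proved by contradiction with the same case split on whether the offending element has $p$-power order. The only divergence is in the mixed-order case, where the paper plays $y$ off against $y^p$ via the witness $u^{p^{t+1}}$ of order $p^{r-1}m$, whereas you play $y$ off against $y^{p^{r-s-1}}$ of order $p^{s+1}$ via the witness $uv_2$ of order $p^{s+1}q$ --- both witnesses exploit compoundness ($s\leq r-2$) in exactly the same way.
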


		\begin{proof} By Proposition \ref{propC_y}, we have $C \subseteq \langle y\rangle=:Y$ and $|C|=|Y|-|\left\lbrace z\in Y: o(z)\leq p^s \right\rbrace |=p^r-p^s.$ 				We first show that	$Y \subseteq \hat{C}.$ Let $a \in Y$. Then $a$ is power of $y$ and, by Lemma \ref{NdipotenzeDip}, we have $N[a]\supseteq N[y]$. By Proposition \ref{operatoreChiusura} $(v)$, recalling that $y\in C$, we then deduce
		 $$N[C]= N[y] \subseteq \bigcap_{a \in Y} N[a]=N[Y].$$
			Therefore, by Proposition \ref{operatoreChiusura}$\,(i)$-$(ii)$, we get $Y\subseteq \hat{Y}=N[N[Y]] \subseteq N[N[C]]=\hat{C}$.
			
			We next show that	$Y=\hat{C}$. Suppose, by contradiction, that there exists $u\in \hat{C}\setminus Y$.
			
		Assume first that the order of $u$ is not a power of $p$.
				By Proposition \ref{operatoreChiusura} $(v)$, we have $u \in \hat{C}\subseteq N[y]$. But $u\neq y$, because the order of $y$ is a power of $p$. It follows that $u$ and $y$ are joined. Then $\langle u\rangle>\langle y \rangle$ since $u\notin Y$. Thus there exist $t\in \mathbb{N}_0$ and $m \in \mathbb{N}$ with $m\geq 2$ and $\gcd(m,p)=1$, such that $o(u)=p^{r+t}m$.
				Now note that $o(u^{p^tm})=o(y)=p^r$. Thus $u^{p^tm}$ and $y$ are generators of the unique subgroup of order $p^r$ inside the cyclic group $\langle u \rangle$. As a consequence, there exists $k \in \mathbb{N}$, with $\gcd(k,p)=1$, such that $u^{p^tmk}=y$. Thus $y^p=(u^{p^tmk})^p=u^{p^{t+1}mk}$. Now $o(y^p)=p^{r-1}$ and, since $y$ is a root for $C$, we have  $y^p\in C$, so that $y\mathtt{N} y^p$ holds. It follows that $N[y]=N[y^p]$. We  see that this is impossible considering $w\coloneqq u^{p^{t+1}}$ and showing that $w\in N[y^p]\setminus N[y]$. Note  first that $o(w)=p^{r-1}m\notin\{ p^{r-1}, p^r\}$,  so that $w\notin\{y^p,y\}.$ Since $y^p=w^{mk}$, we have that $w\in N[y^p]$. On the other hand $w\notin N[y],$ because 	$o(y)=p^r \nmid o(w)$ and $o(w) \nmid o(y)$.
				
Assume next that the order of $u$ is a power of $p$. By Proposition \ref{operatoreChiusura}\,$(v)$, we have that $u\in \hat{C}\setminus Y \subseteq N[y]\setminus Y$. Thus $u\neq y$, $u$ is joined to $y$ and necessarily $y$ is a power of $u$. Since $u\notin Y$, we have that $o(u)>o(y)$. Hence, by Lemma \ref{NdipotenzeDip}, we have $N[u]\subseteq N[y]$. Assume that $N[u]=N[y]$. Then $u\in  C\subseteq Y$, a contradiction. Thus the inclusion is proper. As a consequence, there exists $w \in N[y]$ such that $w \notin N[u]$. Thus $u\notin N[w]$, that implies $u \notin \hat{C}=\bigcap_{z\in N[y]} N[z]$, a contradiction.

		\end{proof}	
	
Remarkably, Proposition \ref{CarachetisationN-classes_1} points out  that if $C\neq \mathcal{S}$ is an $\mathtt{N}$-class of compound type, then some strong arithmetic restrictions on $|C|$ and $|\hat{C}|$ arise. As a consequence, an $\mathtt{N}$-class different from $\mathcal{S}$ that does not satisfy those restrictions is necessarily of plain type. However, as we show in the next proposition, those arithmetic restrictions can be satisfied also by an $\mathtt{N}$-class of plain type in a very specific case that we can completely clarify.

		\begin{prop} \label{CarachetisationN-classes_2}	Let $G$ be a group and $C$ be an $\mathtt{N}$-class of plain type. Assume that there exist a prime number $p$ and integers $r\geq 2$ and $s\in [r-2]_0$ such that  $|C|=p^r-p^s$ and $|\hat{C}|=p^r$. 
			
			Then $\hat{C}=C\cupdot \{1\}$, $s=0$ and $C=[y]_{\diamond}$ for some $y\in G$, with $o(y)>1$ not a prime power and such that $\phi(o(y))=p^r-1$.
		\end{prop}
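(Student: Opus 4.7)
My plan is to establish four facts in sequence: (a) $C \neq \mathcal{S}$ so that $C = [y]_\diamond$ for some $y$ of order $n>1$ with $\phi(n) = p^r-p^s$; (b) $\hat{C} \subseteq \langle y\rangle$; (c) $n$ is not a prime power; (d) $\hat{C} = C \cupdot \{1\}$, hence $s=0$ and $\phi(n) = p^r-1$. Throughout, the key technical tool will be the characterization $u \in \hat{C} \Longleftrightarrow N[y] \subseteq N[u]$, which follows by unwinding $\hat{C} = \bigcap_{z \in N[y]} N[z]$ from Proposition \ref{operatoreChiusura}$(vi)$.

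For (a), from $r\geq 2$ and $s\leq r-2$ one gets $|C| = p^r - p^s \geq p^2 - 1 \geq 3$, so $|C|\neq 1$ and Example \ref{S-comp} forces $C \neq \mathcal{S}$. Plainness of $C$ and $C\neq\mathcal{S}$ then yield $C = [y]_\diamond$ with $n := o(y) > 1$ and $\phi(n) = |C|$. Since $1\in N[z]$ for every $z\in G$, trivially $1 \in \hat{C}$; as $1\notin C$, we get $\hat{C} \supseteq C \cupdot \{1\}$.

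For (b), suppose for contradiction some $u \in \hat{C}\setminus (\langle y \rangle \cup \{1\})$. Then $u \in N[y]$ forces $u$ to be joined to $y$, and $u\notin \langle y\rangle$ forces $y\in \langle u\rangle$, so $n \mid o(u)$ and hence $\phi(o(u)) \geq \phi(n) = p^r-p^s$ by the standard totient divisibility recalled in the preamble. Because $\hat{C}$ is a union of $\mathtt{N}$-classes (Proposition \ref{operatoreChiusura}$(i)$) and each $\mathtt{N}$-class is a union of $\diamond$-classes, $[u]_\diamond \subseteq \hat{C}$. The three subsets $C$, $\{1\}$, $[u]_\diamond$ of $\hat{C}$ are pairwise disjoint (distinct cyclic subgroups, plus the identity), so $p^r = |\hat{C}| \geq (p^r-p^s)+1+(p^r-p^s)$, giving $p^r \leq 2p^s - 1$; this contradicts $p^r \geq p^{s+2} \geq 4p^s$. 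For (c), assume $n = q^k$. Then Lemma \ref{NdipotenzeDip} gives $N[v]\supseteq N[y]$ for every $v\in \langle y\rangle\setminus\{1\}$, so $v\in\hat{C}$; combined with (b) this forces $\hat{C} = \langle y\rangle$, whence $p^r = q^k$ yields $p=q$, $r=k$, and $|C| = \phi(p^r) = p^r - p^{r-1}$ would require $s = r-1$, contradicting $s\leq r-2$.

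For (d), now $n$ is not a prime power, so Proposition \ref{S>1}$(b)$ applied inside the cyclic group $\langle y\rangle$ gives $\mathcal{S}_{\mathcal{P}(\langle y\rangle)} = \{1\}\cup [y]_\diamond = \{1\}\cup C$. Take any $v \in \langle y\rangle\setminus(\{1\}\cup C)$; then $o(v)<n$ and $v$ is not a star of $\mathcal{P}(\langle y\rangle)$, so some $w\in \langle y\rangle$ with $w\neq v$ is not joined to $v$ in $\mathcal{P}(\langle y\rangle)$, hence not in $\mathcal{P}(G)$ either (induced subgraph). Since $v$ is a nontrivial power of $y$, $y$ is joined to $v$, so $w\neq y$; consequently $w\in \langle y\rangle\setminus\{y\}$ is joined to $y$, placing $w\in N[y]\setminus N[v]$. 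Thus $v\notin \hat{C}$, so $\hat{C}\cap \langle y\rangle \subseteq \{1\}\cup C$. Combining with (b) and the reverse inclusion from (a) yields $\hat{C} = C \cupdot \{1\}$. The cardinality identity $p^r = |\hat{C}| = |C|+1 = p^r-p^s+1$ then forces $p^s = 1$, i.e., $s=0$, and $\phi(n) = p^r-1$.

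The main obstacle is step (b): the correct cardinality squeeze ruling out elements $u$ lying strictly above $y$ in the lattice of cyclic subgroups. The decisive observation is that such a $u$ would drag its entire $\diamond$-class into $\hat{C}$, and the divisibility $\phi(n)\mid\phi(o(u))$ then doubles $|C|$ inside $\hat{C}$, which cannot fit in $p^r$ once $r\geq s+2$.
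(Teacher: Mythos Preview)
Your proof is correct and follows essentially the same strategy as the paper's: establish $\hat{C}\subseteq\langle y\rangle$ via a cardinality squeeze, rule out the prime-power case by showing it would force $\hat{C}=\langle y\rangle$ and hence $s=r-1$, and then use the star-vertex description of $\mathcal{P}(\langle y\rangle)$ from Proposition~\ref{S>1} to pin down $\hat{C}=C\cupdot\{1\}$. Your presentation is slightly more streamlined in two places: you merge the paper's Claims~1 and~2 into a single contradiction argument in step~(b), and in step~(c) you invoke Lemma~\ref{NdipotenzeDip} directly to get $N[v]\supseteq N[y]$ for $v\in\langle y\rangle$, whereas the paper argues the equivalent inclusion $\langle y\rangle\subseteq N[z]$ for $z\in N[y]$ by a small case split.
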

		\begin{proof} By Proposition \ref{operatoreChiusura}\,$(i)$, we have   $\hat{C}\supseteq C\cup\{1\} $ and $\hat{C}$ is a union of $\mathtt{N}$-classes and hence of $\diamond$-classes. 
		Observe that $p^r< 2(p^r-p^s)$. Indeed  that inequality is equivalent to
			$ 2< p^{r-s} $ and, surely, $p^{r-s}\geq p^2> 2. $ It follows that $|\hat{C}|< 2|C|$. As a consequence every $\diamond$-class included in $\hat{C}$ and distinct from $C$ must have size smaller than $|C|$. Pick $y\in C$.
			Since $C$ is of plain type we have $C=[y]_{\mathtt{N}}=[y]_{\diamond}$. Note that, since $|C|=p^s(p^{r-s}-1)\geq 3$ and $[1]_{\diamond}=\{1\}$, we have $y\neq 1.$ Then, by Lemma \ref{ord-plain}, the elements in $C$ have all  order equal to $o(y)>1$ and $1\notin C$. In particular, we have $C\cup\{1\}=C\cupdot \{1\}.$
			
			\setcounter{claim}{0}
			\begin{claim}
				\label{claim1_1}
				\rm $\hat{C}\setminus [y]_{\diamond}$ cannot contain elements of order greater than or equal $o(y)$.
			\end{claim}
			Assume, by contradiction, that there exists $x\in \hat{C}\setminus [y]_{\diamond}$ such that $o(x)\geq o(y)$. Then $x\neq y$ and,
			by Proposition \ref{operatoreChiusura}\,$(v)$, we have $\hat{C}\subseteq N[y]$, so that $x \in N[y]\setminus [y]_{\diamond}$. Hence, since $o(x)\geq o(y)$, we necessarily have $y\in \langle x \rangle$. Then $o(y)\mid o(x)$, which implies $\phi(o(y))\mid \phi(o(x))$. In particular we have that $\phi(o(y))\leq \phi(o(x))$.
			Since $\hat{C}$ is a union of $\diamond$-classes, we have that $[x]_{\diamond}\subseteq \hat{C}$. It follows that $|[x]_{\diamond}|=\phi(o(x))\geq \phi(o(y))=|C|$, a contradiction. 	 
			\begin{claim}
				\label{claim1_new}
			\rm  $\hat{C}\subseteq \langle y \rangle$. 
			\end{claim}
			Let $x\in \hat{C}$. If $\langle x\rangle=\langle y\rangle$, then $x$ is a power of $y.$ If $\langle x\rangle\neq \langle y\rangle$, then $x\in \hat{C}\setminus [y]_{\diamond}$ so that, by Claim \ref{claim1_1}, we have $o(x)<o(y)$. Now, by Proposition \ref{operatoreChiusura}\,$(v)$, we have $\hat{C}\subseteq N[y]$ so that 
			 $x\in N[y]$ and thus, again, $x$ is a power of $y.$
						
			Hence we have reached the following chain of inclusions:
			\begin{equation}\label{inequali}
				C\cup \{1\}\subseteq \hat{C}\subseteq \langle y \rangle.
			\end{equation}

			We now show that $o(y)$ cannot be a prime power.  Assume, by contradiction, that 
			$o(y)=q^t$, for some prime $q$ and some integer $t\geq1$. 
			
			We claim that $N[z]\supseteq \langle y \rangle$ holds true  for all $z\in N[y]$. Let $z\in N[y]$. Then we have $z\in \langle y\rangle$ or $y\in \langle z\rangle$.
				Assume first that $o(z)\leq o(y).$ Then  $z \in \langle y \rangle$. Now,  by the fact that $o(y)=q^t$, it follows that $\mathcal{P}(\langle y \rangle)$ is a complete graph and thus $\langle y \rangle \subseteq N[z]$.
				Assume next that $o(z)>o(y)$. Then necessarily $y\in \langle z\rangle$ and hence $\langle y \rangle \leq \langle z \rangle \subseteq N[z]$.
				
			Now, by Proposition \ref{operatoreChiusura}\,$(v)$, we know that $\hat{C}= \bigcap_{z\in N[y]} N[z].$ 	
			Thus  $\hat{C}\supseteq \langle y \rangle $ and, by \eqref{inequali},  we deduce $\hat{C}=\langle y \rangle$.
			As a consequence we have $p^r=|\hat{C}|=|\langle y \rangle|=q^t$. But then $q=p$, $t=r$, and hence $p^r-p^s=|C|=\phi(p^r)=p^r -p^{r-1}$ which implies $s=r-1$, a contradiction.
				
				Hence we have $o(y)=m$, for some 
				  integer $m>1$, not a prime power.
				
				\begin{claim}
					$\hat{C}=C \cup \{1\}$.
				\end{claim}
				
				By \eqref{inequali}, we just need to show that $\hat{C}\subseteq C\cup\{1\}$. Let $x\in \hat{C}$. By \eqref{inequali}, we have that  $x=y^k$, for some $k \in \mathbb{N}$. Assume, by contradiction, that $y^k\notin C\cup \{1\}$.  Then $y^k$  is neither a generator nor the identity of the cyclic group $\langle y \rangle$ of order $m$. By Proposition \ref{S>1} applied to $\mathcal{P}(\langle y \rangle )$, there exists $w\in \langle y \rangle $ such that $w\neq y^k$ and $\{y^k,w\}\notin E_{\mathcal{P}(\langle y \rangle)}$. Then we also have that $\{y^k,w\}\notin E_{\mathcal{P}(G)}$. It follows that $w\in N[y]$ while  $y^k\notin N[w]$. Since by Proposition \ref{operatoreChiusura}\,$(v) $ we have $\hat{C}=\bigcap_{z \in N[y]} N[z]$, we deduce that $x=y^k \notin \hat{C}$, a contradiction.
By  $\hat{C}=C\cupdot \{1\}$, we now deduce  $|\hat{C}|=|C|+1$ and thus $p^r-p^s=|C|=|\hat{C}|-1=p^r-1$ which implies $s=0$. As a consequence, we also have $\phi(o(y))=|C|=p^r-1.$
\end{proof}
		
In \cite[pages 782--783]{Cameron_2}, it is claimed  that there is no $\mathtt{N}$-class $C$ of plain type satisfying 
$|C|=p^r-p^s$ and $|\hat{C}|=p^r$, for some prime number $p$ and some $r,s$ integers with $r\geq 2$ and $s\in [r-2]_0.$ That is a mistake,  because it is instead easy to exhibit examples of $\mathtt{N}$-classes of plain type satisfying those arithmetical restrictions. 

\begin{example}\label{controesempio}{\rm Consider $G=D_{30}$ with the notation in \eqref{diedrale}. Let $C\coloneqq[a]_{\mathtt{N}}$.
$C$ contains the element $a$ of order $15$ and $15$ is not a prime power. We remark that, by Proposition \ref{propC_y}, if an $\mathtt{N}$-class $C=[x]_{\mathtt{N}}\not=\mathcal{S}$ is compound, then $o(x)>1$ is a prime power. Thus we deduce that $C$ is of plain type. As a consequence, we have   $C=[a]_{\diamond}$ and $|C|=\phi(15)= 8=3^2-1.$ Defining now $p\coloneqq3$, $r\coloneqq2$, $s\coloneqq0$ we see that $|C|=p^r-p^s$. In order to show that $|\hat{C}|=p^r$, we prove that
 $\hat{C}=C \cupdot \{1\}$.  Since the elements in $C$ have order $15$, we clearly have $1\notin C$, so that $C \cup \{1\}=C \cupdot \{1\}.$
By Proposition \ref{operatoreChiusura}\,$(i)$, we have $\hat{C}\supseteq C \cup \{1\}$. 
Assume, by contradiction, that there exists $x\in \hat{C}\setminus(C \cup \{1\})$. Then we have  $o(x)\notin\{1, 15\}$. In particular, $x\neq a.$ Moreover, by Proposition \ref{operatoreChiusura}\,$(v)$, we have $\hat{C}\subseteq N[a] $ and therefore $x \in \langle a \rangle$. Then, by Proposition \ref{S>1}, there exists $y\in \langle a \rangle\setminus \{x\}$ such that $y$ is not joined to $x$. It follows that $y\in N[a]$ and $x\notin N[y]$, so that  $x \notin \hat{C}=\bigcap_{z \in N[a]} N[z]$, a contradiction.
}

\end{example}

We are now in position to give birth to a crucial and original definition, the main player of our research.
		
\begin{definition}
	\label{DefCriticalClass}
			\rm	Let $\Gamma$ be a power graph. A \emph{critical class} is an $\mathtt{N}$-class $C$ such that $\hat{C}=C \cupdot \{1\}$ and there exist a prime number $p$ and an integer $r\geq 2$, with $|\hat{C}|=p^r$. 
					\end{definition}
			We emphasize that, in order to check if an $\mathtt{N}$-class is critical, one has to make only arithmetical or graph theoretical considerations. No group theoretical consideration is involved.
			We also want to emphasize that both critical classes of plain type and of compound type can arise in a power graph as shown in the Examples \ref{ex-comp} and \ref{controesempio}.
			Note that $\mathcal{S}$ is never critical since $1\in \mathcal{S}$.
				
	It is interesting to note that a compound class $C$ is critical if and only if $C\not= \mathcal{S}$ and $C$ has parameters $(p,r,0)$. This can easily proved as follows.  Assume that $C$  is critical. Then $C\neq \mathcal{S}$ because $\mathcal{S}$ is never critical. Moreover $\hat{C}=C\cupdot\{1\}$ and $|\hat{C}|=p^r$ for some prime $p$ and integer $r\geq2$. Hence we have $|C|=p^r-1$. Then, by Proposition \ref{CarachetisationN-classes_1}, the parameters of $C$ are $(p,r,0)$. 
	Conversely, assume that $C\neq \mathcal{S}$ and the parameters of $C$ are $(p,r,0)$. By Proposition \ref{CarachetisationN-classes_1}, we have $|\hat{C}|=p^r$ and $|C|=p^r-1$. Now clearly $1\notin C$, otherwise $C=\mathcal{S}$. On the other hand, by Proposition \ref{operatoreChiusura}\,$(i)$, we have  $\hat{C}\supseteq C\cup \{1\}$. It follows that $\hat{C}=C \cupdot \{1\}$. Hence $C$ is critical.
		
	We stress that a critical class $C$ is an $\mathtt{N}$-class, different from $\mathcal{S}$, which we cannot immediately recognize as plain or compound by arithmetical considerations of its size and the size of its closure. On the other hand, if we exclude those classes the recognition is easy.
			
\begin{prop}\label{appoggio} Let $G$ be a group and $C\ne \mathcal{S}$ be a non-critical $\mathtt{N}$-class. Then $C$ is compound if and only if there exist a prime number $p$, an integer $r\geq 2$ and an integer $s\in [r-2]_0$ such that  $|C|=p^r-p^s$ and $|\hat{C}|=p^r$. 
\end{prop}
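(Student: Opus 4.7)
The proposition is essentially a clean repackaging of Propositions~\ref{CarachetisationN-classes_1} and~\ref{CarachetisationN-classes_2}, with the hypothesis ``non-critical'' playing exactly the role of ruling out the exceptional plain case permitted by the latter. So the plan is to handle the two implications separately, each by directly invoking one of the two preceding propositions.

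For the forward direction, I would assume that $C$ is compound and use the fact that $C\ne\mathcal{S}$ to apply Proposition~\ref{CarachetisationN-classes_1}. Letting $(p,r,s)$ be the parameters of $C$ as provided by Proposition~\ref{propC_y} (so that $p$ is prime, $r\geq 2$ and $s\in[r-2]_0$), Proposition~\ref{CarachetisationN-classes_1} immediately gives $|C|=p^r-p^s$ and $|\hat{C}|=p^r$. Notice that this direction does not use the non-criticality of $C$ at all.

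For the converse, assume that there exist $p$ prime, $r\geq 2$ and $s\in[r-2]_0$ with $|C|=p^r-p^s$ and $|\hat{C}|=p^r$, and suppose by contradiction that $C$ is of plain type. Then the hypotheses of Proposition~\ref{CarachetisationN-classes_2} are met, and that proposition yields $\hat{C}=C\cupdot\{1\}$ together with $s=0$ (this is also directly forced by the arithmetic $|\hat{C}|=|C|+1$, giving $p^s=1$). Combining $\hat{C}=C\cupdot\{1\}$ with $|\hat{C}|=p^r$ for a prime $p$ and integer $r\geq 2$, we see that $C$ satisfies precisely the defining conditions of a critical class (Definition~\ref{DefCriticalClass}). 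This contradicts the assumption that $C$ is non-critical, and therefore $C$ must be of compound type.

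There is no real obstacle here, since the work is entirely carried by Propositions~\ref{CarachetisationN-classes_1} and~\ref{CarachetisationN-classes_2}; the only small point requiring care is checking that the arithmetic conditions $|\hat{C}|=|C|+1$ and $|\hat{C}|=p^r$ do match Definition~\ref{DefCriticalClass} exactly, so that ``non-critical'' is the right hypothesis to produce the contradiction. The statement should be presented as a two-line proof making the appeals to the two previous results explicit.
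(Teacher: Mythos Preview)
Your proposal is correct and follows essentially the same approach as the paper: the forward direction invokes Proposition~\ref{CarachetisationN-classes_1}, and the converse argues by contradiction via Proposition~\ref{CarachetisationN-classes_2} to conclude that $C$ would be critical. The paper's proof is slightly terser (it does not bother noting $s=0$ explicitly), but the logical structure is identical.
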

\begin{proof} If $C$ is compound, by Proposition \ref{CarachetisationN-classes_1}, there exist a prime $p$, an integer $r\geq 2$ and an integer $s\in [r-2]_0$ such that  $|C|=p^r-p^s$ and $|\hat{C}|=p^r$. Conversely, Proposition \ref{CarachetisationN-classes_2} shows that if such $p$, $r$ and  $s$ exist and $C$ is plain, then $C$ is critical, a contradiction.
\end{proof}

		For a better final insight on $\mathtt{N}$-classes we need the following result by Feng, Ma and Wang \cite{Ma et al}.
		
		\begin{prop}{\rm \cite[Lemma 3.5]{Ma et al} }
			\label{propLatiNclassi}
			Let $G$ be a group and $x,y \in G$. Let $[x]_{\mathtt{N}}$ and $[y]_{\mathtt{N}}$ be two distinct $\mathtt{N}$-classes different from $\mathcal{S}$. If $\langle x \rangle < \langle y \rangle $, then $|[x]_{\mathtt{N}}|\leq |[y]_{\mathtt{N}}|$, with equality if and only if both the following two conditions hold:
			\begin{enumerate}
				\item[$(i)$] Both $[x]_{\mathtt{N}}$ and $[y]_{\mathtt{N}}$ are of plain type;
				
				\item[$(ii)$] $o(y)=2\,o(x)$ and $o(x)\geq 3$ is odd.
			\end{enumerate} 
		\end{prop}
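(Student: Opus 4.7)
The plan is to establish the inequality by case analysis on the types (plain, by Remark \ref{ord-plain}, or compound, by Proposition \ref{propC_y}) of the classes $[x]_{\mathtt{N}}$ and $[y]_{\mathtt{N}}$, showing strict inequality in every case except plain-plain and identifying the equality condition there. If both classes are plain, $|[x]_{\mathtt{N}}| = \phi(o(x))$ and $|[y]_{\mathtt{N}}| = \phi(o(y))$; since $o(x) \mid o(y)$ with $o(x) < o(y)$, the divisibility property of $\phi$ recalled in Section 2 gives $\phi(o(x)) \mid \phi(o(y))$, hence $\phi(o(x)) \leq \phi(o(y))$, with equality iff $o(x)$ is odd and $o(y) = 2 o(x)$. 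Combined with $o(x) \geq 2$ (from $[x]_{\mathtt{N}} \neq \mathcal{S}$), this forces $o(x) \geq 3$ odd in the equality case, matching $(i)$ and $(ii)$.

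If both classes are compound, they share a common prime $p$ (since $o(x) \mid o(y)$ and all orders in compound classes are prime powers), with parameters $(p, r_1, s_1)$, $(p, r_2, s_2)$ and roots $x'$, $y'$. From $y \in N[x] = N[x']$ and $y \neq x'$, the vertices $y$ and $x'$ are joined; the alternative $y \in \langle x' \rangle$ would place $y$ in $[x]_{\mathtt{N}}$ (its order lies in $[p^{s_1+2}, p^{r_1}]$), contradicting distinctness, so $x' \in \langle y \rangle$ and $r_1 \leq r_2$. The equality $r_1 = r_2$ would force $\langle x' \rangle = \langle y \rangle = \langle y' \rangle$ and $x' \diamond y'$, collapsing the classes; hence $r_1 < r_2$. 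The elementary estimate $|[x]_{\mathtt{N}}| = p^{r_1} - p^{s_1} < p^{r_1} \leq p^{r_2 - 1} < p^{r_2 - 2}(p^2 - 1) \leq p^{r_2} - p^{s_2} = |[y]_{\mathtt{N}}|$ (valid for all primes $p$) yields strict inequality.

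In the plain-compound case, with $[y]_{\mathtt{N}}$ of parameters $(p, r_2, s_2)$ and root $y'$, we must have $o(x) \leq p^{s_2}$, since otherwise $x \in \langle y \rangle \leq \langle y' \rangle$ would have order in $[p^{s_2+1}, p^{r_2}]$ and hence lie in $[y]_{\mathtt{N}}$. Since $o(x) \mid o(y)$ and $o(y)$ is a power of $p$, so is $o(x)$, so $\phi(o(x)) \mid \phi(p^{s_2})$ and thus $|[x]_{\mathtt{N}}| = \phi(o(x)) \leq \phi(p^{s_2}) = p^{s_2-1}(p-1) < p^{r_2} - p^{s_2} = |[y]_{\mathtt{N}}|$, the last inequality being immediate from $r_2 \geq s_2 + 2$.

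The compound-plain case is the main obstacle. Mirroring the both-compound argument, $x' \in \langle y \rangle$ with $x'$ a root of $[x]_{\mathtt{N}}$, so $p^{r_1} \mid o(y)$ and $o(y) > p^{r_1}$. Writing $o(y) = p^a m$ with $\gcd(m, p) = 1$ and $a \geq r_1$, direct computation of $\phi(o(y)) = p^{a-1}(p-1)\phi(m)$ settles all sub-cases except the delicate one where $p$ is odd, $a = r_1$ and $m = 2$ (so $o(y) = 2 p^{r_1}$), in which the naive estimate only gives $\phi(o(y)) = p^{r_1} - p^{r_1-1} < p^{r_1} - p^{s_1} = |[x]_{\mathtt{N}}|$. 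To rule out this configuration, pick $z \in [x]_{\mathtt{N}}$ of minimum order $p^{s_1+1}$, so that $\langle z \rangle$ is the unique subgroup of order $p^{s_1+1}$ of $\langle y \rangle$; then there exists $v \in \langle y \rangle$ with $\langle v \rangle$ the unique subgroup of order $2 p^{s_1+1}$ of $\langle y \rangle$, and inside $\langle y \rangle$, $v$ and $z$ are joined because $\langle z \rangle \leq \langle v \rangle$. Hence $v \in N[z] = N[x']$; however $o(v) = 2 p^{s_1+1}$ and $o(x') = p^{r_1}$ are incomparable under divisibility (since $p$ is odd and $r_1 - s_1 - 1 \geq 1$), so $v$ is not joined to $x'$, a contradiction. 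Together the four cases yield the claimed inequality and equality condition.
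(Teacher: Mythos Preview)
The paper does not give its own proof of this statement; it is quoted from \cite{Ma et al} (Lemma~3.5 there) and used as a black box. Your argument is therefore original relative to this paper, and it is correct. The four-way case split on the types of $[x]_{\mathtt{N}}$ and $[y]_{\mathtt{N}}$ is the natural approach, and the handling of the delicate compound--plain sub-case $o(y)=2p^{r_1}$ with $p$ odd is the key step: the auxiliary element $v\in\langle y\rangle$ of order $2p^{s_1+1}$ lies in $N[z]=N[x']$ but cannot be joined to $x'$ precisely because $s_1\le r_1-2$, i.e.\ because $[x]_{\mathtt{N}}$ is genuinely compound. This is exactly the obstruction that rules out the one configuration where the naive $\phi$-estimate goes the wrong way.

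One small presentational gap: in the plain--compound case you write $\phi(p^{s_2})=p^{s_2-1}(p-1)$, which is ill-formed when $s_2=0$. That sub-case is in fact vacuous, since $o(x)\le p^{0}=1$ would force $x=1$ and hence $[x]_{\mathtt{N}}=\mathcal{S}$; you should say so explicitly before invoking the formula.
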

		
		We can now state and prove  a result that allows to recognize if a critical class is plain or compound, by  purely graph theoretical considerations, when the star class is trivial. Such a result will be crucial for the proof of the Main Theorem.
		
		\begin{prop}
			\label{LemmaCriticalClassMio} Let $G$ be a group with $\mathcal{S}=\{1\}$ and $C=[y]_{\mathtt{N}}$ be a critical class. Then $C$ is of plain type if and only if there exists $x\in G\setminus \hat{C}$ such that $|[x]_{\mathtt{N}}|\leq |C|$ and  $\{x,y\} \in E$.
		\end{prop}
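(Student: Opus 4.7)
The plan is to use Proposition \ref{propLatiNclassi} as the main lever and to analyse its equality case carefully, since the tightness of the bound there is precisely what could obstruct both directions. I will repeatedly use that, since $C$ is critical, $\hat{C}=C\cupdot\{1\}$ and $|\hat{C}|=p^{r}$, so $|C|=p^{r}-1$.

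For the forward implication I would assume $C$ is plain and construct the desired $x$. By Proposition \ref{CarachetisationN-classes_2} the element $y$ satisfies $m:=o(y)>1$ with $m$ not a prime power, so $m$ has at least two distinct prime divisors and in particular an odd prime divisor $q$. I would set $x:=y^{q}$, which has order $m/q$ strictly between $1$ and $m$. This forces $x\in\langle y\rangle\setminus\hat{C}$ (since $\hat{C}$ consists of $1$ together with the generators of $\langle y\rangle$, the only elements of $[y]_{\diamond}\cup\{1\}$), and $\{x,y\}\in E$ because $x$ is a non-identity power of $y$ distinct from $y$. Since $\langle x\rangle<\langle y\rangle$ and both $[x]_{\mathtt{N}}$ and $[y]_{\mathtt{N}}=C$ differ from $\mathcal{S}=\{1\}$, Proposition \ref{propLatiNclassi} yields $|[x]_{\mathtt{N}}|\leq|C|$; the equality case would require $o(y)=2\,o(x)$, i.e.\ $q=2$, which the choice of $q$ odd excludes.

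For the reverse direction I would argue by contraposition: assume $C$ is compound and show that no candidate $x$ exists. By the remark after Proposition \ref{CarachetisationN-classes_2}, the parameters of $C$ are $(p,r,0)$, so $y$ may be taken as a root and Proposition \ref{CarachetisationN-classes_1} then gives $\hat{C}=\langle y\rangle$. For any $x\in G\setminus\hat{C}$ joined to $y$, the relation $x\notin\langle y\rangle$ combined with the edge $\{x,y\}\in E$ in $\mathcal{P}(G)$ forces $\langle y\rangle\subsetneq\langle x\rangle$. The classes $[x]_{\mathtt{N}}$ and $[y]_{\mathtt{N}}=C$ are distinct (as $x\notin C\subseteq\hat{C}$) and neither equals $\mathcal{S}$; Proposition \ref{propLatiNclassi} then gives $|C|\leq|[x]_{\mathtt{N}}|$. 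Combined with the hypothetical $|[x]_{\mathtt{N}}|\leq|C|$ this forces equality, but the equality case of Proposition \ref{propLatiNclassi} requires both classes to be plain, contradicting that $C$ is compound.

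The delicate step will be securing the strict inequality in the plain case: choosing $q$ to be an odd prime divisor of $o(y)$, which is available precisely because $o(y)$ is not a prime power, is exactly what neutralises the single obstruction identified by Feng--Ma--Wang. In the compound case the same obstruction is instead dispatched for free by the compoundness of $C$ itself.
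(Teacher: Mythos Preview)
Your proof is correct and uses the same key tool as the paper (Proposition~\ref{propLatiNclassi}), with two minor differences worth noting.

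In the forward direction you work harder than necessary: the statement only asks for $|[x]_{\mathtt{N}}|\leq |C|$, not strict inequality, so there is no need to choose $q$ odd to dodge the equality case of Proposition~\ref{propLatiNclassi}. The paper simply picks any $x\in\langle y\rangle\setminus\hat{C}$ (nonempty because $o(y)$ is not a prime, hence $o(y)>\phi(o(y))+1$) and applies the inequality directly. Your more specific choice is harmless but superfluous.

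In the backward direction you argue by contraposition and write ``$y$ may be taken as a root''. This is legitimate but deserves one line of justification: since $x\in G\setminus\hat{C}$ implies $x\notin C$, and all elements of $C$ share the same closed neighbourhood, the condition $\{x,y\}\in E$ is independent of which $y\in C$ is chosen. The paper instead argues the converse directly, splitting into the two cases $\langle y\rangle<\langle x\rangle$ and $\langle x\rangle<\langle y\rangle$; in the first it reaches equality in Proposition~\ref{propLatiNclassi} and reads off that $C$ is plain, while in the second it observes $x\in\langle y\rangle\leq\langle z\rangle=\hat{C}$ for a root $z$, a contradiction. Your contrapositive collapses these two cases into one by passing to the root first, which is arguably cleaner once the ``WLOG'' is justified. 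Also note that to invoke Proposition~\ref{propLatiNclassi} in the forward direction you need $[x]_{\mathtt{N}}\neq[y]_{\mathtt{N}}$, which follows from $x\notin\hat{C}\supseteq C$ but is not stated explicitly in your write-up.
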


		\begin{proof} 
			Note that we have $C\ne \mathcal{S}$, because $\mathcal{S}$ is never critical.
			Assume first that $C$ is of plain type. Then $\hat{C}$ is formed by the generators of $\langle y \rangle$ and by $1$.
			By Proposition \ref{CarachetisationN-classes_2}, we have $o(y)=m>1$ not a prime power. In particular, $m$ is not a prime and thus $m>\phi(m)+1.$ As a consequence, $\langle y \rangle\setminus \hat{C}\neq \varnothing.$ Pick $x\in \langle y \rangle\setminus \hat{C}$. Then we have $\langle x \rangle < \langle y \rangle$ and then also $\{x,y\}\in E$. Note that, since $x\neq 1$, we have $[x]_{\mathtt{N}}\ne \mathcal{S}=\{1\}$. Moreover, since $x$ does not generate $\langle y \rangle$, we also have $[x]_{\mathtt{N}}\ne [y]_{\mathtt{N}}=[y]_{\diamond}$. Hence, by Proposition \ref{propLatiNclassi}, we deduce $|[x]_{\mathtt{N}}|\leq |C|$.
			
			Assume next that there exists  $x\in G\setminus \hat{C}$ such that $|[x]_{\mathtt{N}}|\leq |C|=|[y]_{\mathtt{N}}|$ and $\{x,y\}\in E$.
			Note that
			$x\ne 1$ since $x \notin \hat{C}=C\cupdot \{1\}$. 
			As a consequence $[x]_{\mathtt{N}}\ne \mathcal{S}=\{1\}$.   Observe next that $[x]_{\mathtt{N}}\ne [y]_{\mathtt{N}}$ holds, otherwise we would have $x\in C\subseteq \hat{C}$. In particular,  we have $\langle x \rangle \not= \langle y \rangle$.
			Since $\{x,y\}\in E$, it follows that $\langle y \rangle < \langle x \rangle$ or $\langle x \rangle < \langle y \rangle$.
				If $\langle y \rangle < \langle x \rangle$, then, by Proposition \ref{propLatiNclassi}, we deduce $|[y]_{\mathtt{N}}|\leq |[x]_{\mathtt{N}}|$ and hence $|[x]_{\mathtt{N}}|=|[y]_{\mathtt{N}}|$. Thus, by Proposition \ref{propLatiNclassi}, $C=[y]_{\mathtt{N}}$ is of plain type.
			If $\langle x \rangle < \langle y \rangle$, then $x\in \langle y \rangle $.
			Suppose, by contradiction, that $C$ is of compound type. Let $z$ be a root of $C$. Then, by Propositions \ref{propC_y} and  \ref{CarachetisationN-classes_1}, we get $x\in \langle y \rangle\leq \langle z \rangle=\hat{C}$, a contradiction.

\end{proof}
		
	\section{The reconstruction of the directed power graph}	
		We now describe the $\diamond$-classes inside the directed power graph, exploiting some facts  observed in \cite{Cameron_2}. The following lemma, together with other previous results,  paves the road for the effective reconstruction of the directed power graph from its undirected counterpart. 
		\begin{lemma}
			\label{lemmaClassiDiamond}Let $G$ be a group and let $X, Y$ be two distinct $\diamond$-classes. In $\vec{\mathcal{P}}(G)$ the following facts hold:
			\begin{enumerate}
				
				\item[$(i)$] The subdigraph induced by a $\diamond$-class is a complete digraph.
				
				\item[$(ii)$]  If there is at least one arc directed from $X$ to $Y$, then $(x,y)$ is an arc  for all $x\in X$ and $y\in Y$. Moreover, there is no arc directed from $Y$ to $X$.
				\item[$(iii)$] Let $X$ and $Y$ be joined and $|X|>|Y|$. Then there is an arc directed from $X$ to $Y$.
				\item[$(iv)$] Let $X$ and $Y$ be joined and $1\not=|X|=|Y|$. There is an arc directed from $X$ to $Y$, if and only if there exists an involution $\tau\in G$ such that $[\tau]_\diamond$ is joined with $X$.
				\item[$(v)$] Let $X$ and $Y$ be joined and $1=|X|=|Y|$. Then one of them is $[1]_\diamond$, the other is $[\tau]_\diamond$, with $\tau\in G$ an involution, and $(\tau, 1)$ is the only arc between the two $\diamond$-classes. 
			\end{enumerate}
		\end{lemma}
		
		\begin{proof}  Recall that $A$ denotes the arc set of $\vec{\mathcal{P}}(G)$.
			\begin{enumerate}
				
				\item[$(i)$] This is obvious by the definition of the relation $\diamond$.
				
				\item[$(ii)$] Note first that, if $x \in X$ and $y\in Y$, then we cannot have in $A$ both the arcs $(x,y)$ and $(y,x)$, otherwise we would have $x\diamond y$ and then $X=Y$.
				Suppose now that there exist $\bar{x}\in X$ and $\bar{y}\in Y$ such that $(\bar{x},\bar{y})\in A$. 
				Pick $x\in X$ and $y\in Y$. Then $y$ is a power of $\bar{y}$, which is power of $\bar{x}$, which in turn is a power of $x$. Hence $(x, y)\in A$. 	\smallskip
				
\item[$(iii)$] Since the classes are joined there exist joined vertices $x \in X$ and $y \in Y$. 
				Since $\phi(o(x))=|X|>|Y|=\phi(o(y))$, we deduce that $o(y)\mid o(x)$ and so $y$ is a power of $x.$
				
				\smallskip

				\item[$(iv)$] Let $X=[x]_{\diamond}$ and $Y=[y]_{\diamond}$ for some $x,y \in G$.
				Assume first that $(x,y) \in A$.
				Then $y\in \langle x\rangle$ and $o(y) \mid o(x)$. Now $|X|=|Y|$ implies $\phi(o(x))=\phi(o(y))$.
				This implies that $o(x)=2o(y)$, with $o(y)$ odd, since otherwise we would have $o(x)=o(y)$ and then also the contradiction $X=Y$. Observe that $\tau\coloneqq x^{o(y)}$ is an involution and that $x\neq \tau$ because $|[\tau]_{\diamond}|=1\neq |[x]_{\diamond}|.$			
				It follows that $(x, \tau) \in A$ and thus $[\tau]_\diamond$ and $X$ are joined.\\
				Conversely assume that there exists an involution $\tau\in G$ such that $[\tau]_\diamond$ and $X$ are joined. Then we have $|X|>|[\tau]_\diamond|=1$ and thus, by $(iii)$, $(x,\tau)\in A$. We show that $(x,y)\in A$. Assume, by contradiction, that $(y,x)\in A$. As before one obtains $o(y)=2o(x)$, with $o(x)$ odd, against the fact that $2=o(\tau)\mid o(x)$.

				\item[$(v)$] A $\diamond$-class of size one contains either $1$ or an involution, and involutions are never joined. The result therefore follows from the definition of $\vec{\mathcal{P}}(G)$.
			\end{enumerate}
		\end{proof}
		
		\subsection{The Main Theorem}\label{dimmain}

We now pass to prove our main theorem. First we need to be precise about our terminology. It seems that, in the literature, a clear definition of reconstruction was missing.

		We say that we can
		reconstruct the directed power graph from  a power graph $\Gamma=(V,E)$ if we are able, by purely arithmetical or graph theoretical considerations, without taking into account any group theoretical information, to do one of the following:
		\begin{itemize}
		\item prove that there exists a unique group $G$ such that $\Gamma=\mathcal{P}(G)$ and exhibit such $G$;
\item exhibit a digraph $\vec{\Gamma}$ isomorphic to $\vec{\mathcal{P}}(G)$ for all those $G$ such that $ \Gamma=\mathcal{P}(G)$.	

		\end{itemize}
		Note that in the first case, $G$ is uniquely determined and exhibited and thus we can clearly also exhibit its directed power graph. In the second case  there could be many groups $G$ realizing $ \Gamma=\mathcal{P}(G)$, and usually one is not able to explicitly exhibit them. The point is that, whatever those groups are, we require to be able to show a directed graph which, up to isomorphisms of directed graphs, equals the directed power graphs of all those groups. In particular, the directed power graphs of all those $G$ will be isomorphic.
		
		For the proof, we are going to use some methods from the proof of \cite[Theorem 2]{Cameron_2}, correcting the mistake about critical classes and filling in some missing details. Of course, since we are proving a stronger result, the architecture of the proof and some parts of it are completely original. 
		
	\begin{main}\label{UPG-DPG_generalised} We can reconstruct the directed power graph from any power graph.
		\end{main}
		
		\begin{proof} Let $\Gamma=(V,E)$ be a power graph and let $n\coloneqq|V|$. Since $\Gamma$ is a power graph, there exists a group $G$ such that $\Gamma=\mathcal{P}(G)$ and $V=G.$ If $n=1$, then $G=1$; if $n=2$ then $G\cong C_2$. Hence in those cases $G$ is uniquely determined and exhibited.
Assume then that $n\geq 3$.
			
			We consider the size of the set  $\mathcal{S}$ of the star vertices in $\Gamma.$ By Proposition \ref{S>1}, if $|\mathcal{S}|>1$ the following
			three possibilities arise, each of them leading to a unique and exhibited group $G$:
			\begin{itemize}
				\item $|\mathcal{S}|=n$. In this case  the only possibility is $G\cong C_n$ with $n$ a prime power. 
				
				\item $|\mathcal{S}|=1+\phi(n)\neq n$. Here we have  the only possibility  $G\cong C_n$, with $n$ not a prime power.
				
				\item $|\mathcal{S}|=2$. Note that we are not in one of the previous cases because $n\geq 3$ implies $2\ne n$, and  $2\neq 1+\phi(n).$ 
				Here, $G$ is the generalized quaternion group of order $n$. 				
			\end{itemize}
		 We now study the case $|\mathcal{S}|=1$. Then $\mathcal{S}=\{1\}$, and we recognize which vertex of $\Gamma$ is the identity element $1$ of the group $G$.
		This is the genuine interesting case to deal with and it needs the whole machinery of the paper.
		
		Let $\mathcal{K}$ be the partition of $V\setminus\{1\}$ into $\mathtt{N}$-classes. We show that, given a class in $ \mathcal{K}$, we can decide if it is of plain or compound type by arithmetical or graph theoretical considerations, without taking into account any group theoretical information.
		
		Pick then $C=[y]_{\mathtt{N}}\in \mathcal{K}$. Assume first that $C$ is critical. Then, by Proposition \ref{LemmaCriticalClassMio}, $C$ is plain if and only if there
		exists $x\in V\setminus \hat{C}$ such that $|[x]_{\mathtt{N}}|\leq|C|$ and $\{x,y\}\in E$. Assume next that $C$ is not critical.  Then, by Proposition \ref{appoggio}, $C$ is compound if and only if there exist a prime $p$ and integers $r\geq 2$ and $s\in [r-2]_0$ such that  $|C|=p^r-p^s$ and $|\hat{C}|=p^r$. 
		
		In $\mathcal{K}$, denote by $\mathcal{K}_{\mathfrak{P}}$ the set of plain classes and by $\mathcal{K}_{\mathfrak{C}}$ the set of compound classes. Of course, we have $\mathcal{K}=\mathcal{K}_{\mathfrak{P}}\cupdot \mathcal{K}_{\mathfrak{C}}.$
		
		Let $C\in \mathcal{K}_{\mathfrak{C}}$. By Proposition \ref{CarachetisationN-classes_1},  we have parameters  $(p,r,s)$ associated with $C$, where $p$ is a prime, $r\geq 2$ and $s\in [r-2]_0$. Recall that  $|C|=p^r-p^s$ and $|\hat{C}|=p^r$ and note that 
		$$p^r-p^s=\sum_{i=s+1}^r (p^i-p^{i-1})=\sum_{i=s+1}^r \phi(p^i).$$ 
	We now partition $C$, arbitrarily, into $r-s\geq 2$ subsets $X_i(C)$ of sizes $\phi(p^i)$, for $s+1\leq i \leq r$.
		Let $\mathcal{K}_{C}\coloneqq\{X_i(C): s+1\leq i \leq r\}$ be the obtained partition of $C$. 
				
		By Proposition \ref{propC_y}, we  know that $C\in \mathcal{K}_{\mathfrak{C}}$ also admits the partition $\diamond_{C}$ given by the  $r-s$ $\diamond$-classes of $C$, and that the sizes of those $\diamond$-classes  are $\phi(p^i)$, for $s+1\leq i \leq r$. Thus $\mathcal{K}_{C}$ and $\diamond_{C}$ are two partitions of $C$ formed by the same number of subsets of the same sizes. As a consequence, there clearly exists $\psi_C\in S_{C}$ such that $\psi_C(X_i(C))$ is a $\diamond$-class of $C$ of size $\phi(p^i)$, for all $s+1\leq i \leq r$, and $\diamond_{C}=\{\psi_C(X):X\in\mathcal{K}_{C} \}=\psi_C(\mathcal{K}_{C}) .$
		
		Define next the partition $\mathcal{K}_{\diamond}$ of $V\setminus\{1\}$ given by $\mathcal{K}_{\diamond}\coloneqq\mathcal{K}_{\mathfrak{P}}\cup \bigcup_{C\in \mathcal{K}_{\mathfrak{C}}} \mathcal{K}_{C}$. 
		By what was shown above, we have that there exists $\psi_{\mathfrak{C}}\in \times_{_{C\in\mathcal{K}_{\mathfrak{C}}}} S_{C}$ such that $\psi_{\mathfrak{C}}( \bigcup_{C\in \mathcal{K}_{\mathfrak{C}}} \mathcal{K}_{C}) $ is the partition of the set $ \bigcup_{C\in \mathcal{K}_{\mathfrak{C}}} C$ into its $\diamond$-classes. Completing the permutation $\psi_{\mathfrak{C}}$ to a permutation of $V$ which fixes the $\diamond$-classes in $\mathcal{K}_{\mathfrak{P}}$ and $1$, we obtain $\psi\in \times_{_{C\in\mathcal{K}\cup \{\mathcal{S}\}}} S_{C}$ such that $\psi(\mathcal{K}_{\diamond})$ is the partition of $V\setminus \{1\}$ in $\diamond$-classes.
		
		Now, by  Proposition \ref{N-classi_automorphism}, the group $\times_{_{C\in\mathcal{K}\cup \{\mathcal{S}\}}} S_{C}$ is a group of automorphisms for the power graph $\Gamma.$
		Hence, what shown above, proves that $\mathcal{K}_{\diamond}$ is, up to the graph isomorphism $\psi$, the partition of $V\setminus \{1\}$ in $\diamond$-classes.
		
		We are now ready to define a set of arcs $A_{\vec{\Gamma}}\subseteq V\times V$.
		For every $\{x,y\} \in E$ we are going to set $(x,y)\in A_{\vec{\Gamma}}$ or $(y,x)\in A_{\vec{\Gamma}}$ (or both). 
		Let $\{x,y\}\in E$.
		Assume first that $1\in \{x,y\}$, say $y=1$. Then we set $(x,1)\in A_{\vec{\Gamma}}$.
		Assume next that $1\notin \{x,y\}$.
		If there exists $C\in \mathcal{K}_{\diamond}$ such that $x,y\in C$, then we set both $(x,y)\in A_{\vec{\Gamma}}$ and $(y,x)\in A_{\vec{\Gamma}}$.
		If $x\in X$, $y\in Y$, with $X,Y\in \mathcal{K}_{\diamond}$ and $X\neq Y$, then we take our decision through the computation of $|X|$ and $|Y|$.
		If $|X|>|Y|$, then we set $(x,y)\in A_{\vec{\Gamma}}$. Assume next that $|X|=|Y|\ne 1$. Then we also have $|\psi(X)|=|\psi(Y)|\ne 1$ and, by definition of $\psi$,  $\psi(X)$ and $\psi(Y)$ are $\diamond$-classes. Thus  Lemma \ref{lemmaClassiDiamond} implies that there exists $Z\in \mathcal{K}_{\diamond}$ such that $\psi(Z)=[\tau]_\diamond$, with $\tau\in G$ an involution and $\psi(Z)$ is joined with exactly one of $\psi(X)$ or $\psi(Y)$. Hence $|Z|=1$ and, since $\psi$ is a graph isomorphism, $Z$ is joined with exactly one of $X$ or $Y$. We set $(x,y)\in A_{\vec{\Gamma}}$ if $X$ is joined with $Z$, and we set $(y,x)\in A_{\vec{\Gamma}}$ if $Y$ is joined with $Z$.
		Finally we suppose, by contradiction, that $|X|=|Y|=1$. Then, by Lemma \ref{lemmaClassiDiamond}, one of the $\diamond$-classes $\psi(X)$ and $\psi(Y)$ must be $\mathcal{S}=\{1\}$, against the fact that $\psi(X),\psi(Y) \in \mathcal{K}_{\diamond}$, a partition of $V\setminus \{1\}$.
		
		We define now $\vec{\Gamma}\coloneqq(V,A_{\vec{\Gamma}})$.
		We claim that $\psi$ is a digraph isomorphism between $\vec{\Gamma}$ and the directed power graph $\vec{\mathcal{P}}(G)=(G,A)$.
		First observe that $\psi$ is, by definition, a bijection between the vertex sets $V=G$ of the two digraphs.
		We show that $(x,y)\in A_{\vec{\Gamma}}$ if and only if $(\psi(x), \psi(y))\in A$.  Assume first that
		$x,y$ belong to the same $C\in \mathcal{K}_{\diamond}$. This happens if and only if $\psi(x)$ and $\psi(y)$ belong to the same $\diamond$-class  $\psi(C)$.  Hence $(x,y)\in A_{\vec{\Gamma}}$ if and only if $(\psi(x), \psi(y))\in A$,  by the construction above and by Lemma \ref{lemmaClassiDiamond}\,$(i)$.
		Assume next that $x\in X$, $y\in Y$, with $X,Y \in \mathcal{K}_{\diamond}$ and $X\ne Y$. This happens if and only if $\psi(x)\in \psi(X)$ and $\psi(y)\in \psi(Y)$ with $\psi(X) \ne \psi(Y)$ $\diamond$-classes.
		
		Then, by the definition of $A_{\vec{\Gamma}}$ and by Lemma \ref{lemmaClassiDiamond}\,$(iii)$-$(iv)$, one of the following holds:
		\begin{itemize}
			\item $|X|>|Y|$ if and only if $|\psi(X)|>|\psi(Y)|$ and hence $(x,y)\in A_{\vec{\Gamma}}$ if and only if $(\psi(x), \psi(y))\in A$.
			
			\item $|X|=|Y|\ne 1$ and there exists $Z\in \mathcal{K}_{\diamond}$ such that $|Z|=1$ and $Z$ is joined to $X$ if and only if the same is true changing $X, Y$ and $Z$ with, respectively $\psi(X), \psi(Y)$ and $\psi(Z)$. Thus $(x,y)\in A_{\vec{\Gamma}}$ if and only if $(\psi(x), \psi(y))\in A$. 
		\end{itemize}
	
		It remains to consider the arcs in $A_{\vec{\Gamma}}$ incident to $1$. By the construction of $\vec{\Gamma}$, those arcs are the $(x,1)$ for $x\in G\setminus \{1\}$, and obviously $(\psi(x), \psi(1))=(\psi(x),1)\in A$ for all $x\in G\setminus \{1\}$.
			
		\end{proof}
		As a corollary we immediately deduce the following result. It appeared in \cite{Cameron_2}, as the main theorem, with a step of the proof affected by a mistake, as explained in Section \ref{sect-crit-class}. Thanks to the Main Theorem, we can completely confirm its validity.
		
		\begin{corollary}{\rm \cite[Theorem 2 ]{Cameron_2}}\label{cCameron_2}
			\label{UPG-DPG}
			If $G_1$ and $G_2$ are finite groups whose power graphs are isomorphic, then their directed power graphs are also isomorphic.
		\end{corollary}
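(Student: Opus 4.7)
The plan is to derive the corollary as an essentially immediate consequence of the Main Theorem, since that result already produces, from any power graph $\Gamma$, a digraph $\vec{\Gamma}$ isomorphic to $\vec{\mathcal{P}}(G)$ for \emph{every} finite group $G$ with $\mathcal{P}(G)=\Gamma$. Hence all I really need to do is to bring two groups $G_1$ and $G_2$ with a priori only isomorphic power graphs into a single common labeled framework on which the Main Theorem can be invoked. This will be achieved by a transport-of-structure argument.

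Concretely, I first fix a graph isomorphism $\varphi\colon\mathcal{P}(G_1)\to\mathcal{P}(G_2)$ and use it to pull back the group operation of $G_1$ onto the underlying set of $G_2$: define a group $G_1'$ on the set $G_2$ by declaring $a\cdot_{G_1'}b:=\varphi(\varphi^{-1}(a)\cdot_{G_1}\varphi^{-1}(b))$ for all $a,b\in G_2$. By construction, $\varphi\colon G_1\to G_1'$ is a group isomorphism, and therefore induces a digraph isomorphism $\vec{\mathcal{P}}(G_1)\cong\vec{\mathcal{P}}(G_1')$. Because $\varphi$ is at the same time a graph isomorphism, we obtain the equality $\mathcal{P}(G_1')=\mathcal{P}(G_2)$ of \emph{labeled} graphs on the common vertex set $G_2$.

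Having achieved this alignment, I apply the Main Theorem to the single power graph $\Gamma:=\mathcal{P}(G_2)=\mathcal{P}(G_1')$; the theorem furnishes a digraph $\vec{\Gamma}$ which is simultaneously isomorphic to $\vec{\mathcal{P}}(G_1')$ and to $\vec{\mathcal{P}}(G_2)$. Chaining the resulting digraph isomorphisms yields
$$\vec{\mathcal{P}}(G_1)\cong\vec{\mathcal{P}}(G_1')\cong\vec{\Gamma}\cong\vec{\mathcal{P}}(G_2),$$
which is precisely the desired conclusion. I do not expect any genuine obstacle here: all the real difficulty has been absorbed by the Main Theorem, and the only delicate point is the bookkeeping required to ensure that both groups are realized on a single labeled vertex set so that one application of the Main Theorem produces a digraph identifiable with both directed power graphs at once.
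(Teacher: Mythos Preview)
Your proposal is correct and follows the same approach as the paper: both derive the corollary directly from the Main Theorem, and the paper in fact gives no argument beyond the words ``we immediately deduce.'' Your transport-of-structure step, which realigns $G_1$ and $G_2$ on a common vertex set so that a single application of the Main Theorem suffices, is a welcome clarification of what ``immediately'' means here, since the Main Theorem is stated for groups $G$ with $\Gamma=\mathcal{P}(G)$ as an equality rather than an isomorphism.
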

		
	We emphasize that the Main Theorem  expresses a  stronger result with respect to Corollary \ref{cCameron_2}. In particular, it allows us to give a clear answer to the request, posed in 2022 by P. J. Cameron \cite[Question 2]{GraphsOnGroups}, for a simple algorithm able to reconstruct the directed power graph of a group  from its power graph. Indeed its proof  is  constructive and can be explicitly converted into an algorithm.
The  readers interested into the description of the algorithm and its pseudo-code are referred to the Appendix.
Here, we illustrate how the algorithm works on some enlightening examples and in the framework of a game. This approach should make clear that the Main Theorem expresses quite a surprising result: we can reconstruct the directed version of a power graph, having no knowledge about the possible groups of which it is the power graph.

\subsection{Examples of reconstruction of the directed power graph}\label{ex-recon}

Imagine we play mathematics with a friend.  She has a finite group $G$ in hands and computes the power graph $\Gamma\coloneqq\mathcal{P}(G)$. Then she hides $G$ inside an inaccessible black-box and let us see only $\Gamma.$ She challenges us to guess the shape of $\vec{\mathcal{P}}(G)$.

The game is quite boring if the graph shown has a star set with more than one vertex. In that case we can easily guess what the group $G$ in the black-box  is, by Proposition \ref{S>1}, and then obtain its directed power graph.

So suppose she opts for something like $G=D_{30}$. The graph that she shows us is that in Figure \ref{PGD15}, but without colours.  We put colours to recognize the different nature of the vertices.
The white vertex is clearly the only star vertex, hence it is the identity of the group. The orange vertices are joined only with the identity, so they are involutions.
We split the remaining vertices into $\mathtt{N}$-classes just examining the edges in the graph. We end up with three $\mathtt{N}$-classes, each containing only vertices of one colour: magenta, blue or yellow.
We now want to understand if those classes are of plain or of compound type. 
Let $x$ and $z$ be, respectively, a blue and a yellow vertex. Let also $y$ be one of the magenta vertices.
We start with the easy ones.
Observing the graph we are able to compute the neighbourhood closure of $[x]_{\mathtt{N}}$ and $[z]_{\mathtt{N}}$. We obtain $\widehat{[x]_{\mathtt{N}}}=[x]_{\mathtt{N}}\cup [y]_{\mathtt{N}}\cup \{1\}$ and $\widehat{[z]_{\mathtt{N}}}=[z]_{\mathtt{N}}\cup [y]_{\mathtt{N}}\cup \{1\}$, hence $[x]_{\mathtt{N}}$ and $[z]_{\mathtt{N}}$ are not critical classes. Since we have $|[x]_{\mathtt{N}}|=4$, $|\widehat{[x]_{\mathtt{N}}}|=12$, $|[z]_{\mathtt{N}}|=2$, $|\widehat{[z]_{\mathtt{N}}}|=10$, then neither $[x]_{\mathtt{N}}$ nor $[z]_{\mathtt{N}}$ are of compound type, by Proposition \ref{CarachetisationN-classes_1}. 
Now let us focus on the class $[y]_{\mathtt{N}}$ of magenta vertices.
As before we compute the neighbourhood closure obtaining $\widehat{[y]_{\mathtt{N}}}=[y]_{\mathtt{N}} \cupdot \{1\}$. Evaluating also $|\widehat{[y]_{\mathtt{N}}}|=9=3^2$, we see that $[y]_{\mathtt{N}}$ is a critical class, just recalling Definition \ref{DefCriticalClass}.
In order to understand the nature of $[y]_{\mathtt{N}}$ we seek outside $\widehat{[y]_{\mathtt{N}}}$. It is straightforward to check the following facts for $x$, one of the blue vertices:
\begin{enumerate}
	\item $x\in G\setminus \widehat{[y]_{\mathtt{N}}}$;
	
	\item $4=|[x]_{\mathtt{N}}|\leq|[y]_{\mathtt{N}}|=8$;
	
	\item $\{x,y\}\in E$.
\end{enumerate} 
Therefore, by Proposition \ref{LemmaCriticalClassMio}, we deduce that $[y]_{\mathtt{N}}$ is of plain type. Summarizing, we have
$[y]_{\mathtt{N}}=[y]_{\diamond}$, $[x]_{\mathtt{N}}=[x]_{\diamond}$ and $[z]_{\mathtt{N}}=[z]_{\diamond}$.

 We now have the partition composed by $[y]_{\diamond}$, $[x]_{\diamond}$, $[z]_{\diamond}$ and other sixteen $\diamond$-classes each containing the identity or an involution. That is the partition of $V$ in $\diamond$-classes.

The game is not over yet, because we need to exhibit a digraph, but we are in the homestretch.
We  replace all the edges joining a vertex with the identity with arcs of the type $(v,1)$, for $v \in V\setminus \{1\}$. 
For all the other edges we follow Lemma \ref{lemmaClassiDiamond}. In particular we use $(i)$ and $(iii)$.
Edges between vertices of the same $\diamond$-class are replaced by two arcs, one for both the directions.
The remaining edges are the one between the magenta vertices and the blue or yellow vertices.
Without hesitating we replaced all those edges with arcs starting in the magenta vertices and ending in the blue or yellow vertices, because $|[y]_{\mathtt{N}}|=8$, $|[x]_{\mathtt{N}}|=4$ and $|[z]_{\mathtt{N}}|=2$ hold. 

\begin{figure}
	\centering
	\begin{tikzpicture}[line cap=round,line join=round,>=triangle 45,x=1.0cm,y=1.0cm]
		\clip(-6.,-8.) rectangle (8.,10.);
		\draw [line width=1.pt] (2.993657089646132,-0.5747290551732837)-- (4.641980191492829,0.6246686768274756);
		\draw [line width=1.pt] (2.993657089646132,-0.5747290551732837)-- (5.659959266685525,2.390806431953024);
		\draw [line width=1.pt] (2.993657089646132,-0.5747290551732837)-- (5.871576485073833,4.418302947484353);
		\draw [line width=1.pt] (2.993657089646132,-0.5747290551732837)-- (5.240241307116339,6.356585656658499);
		\draw [line width=1.pt] (2.993657089646132,-0.5747290551732837)-- (3.8751173205819267,7.870507870313641);
		\draw [line width=1.pt] (2.993657089646132,-0.5747290551732837)-- (2.0122468645044806,8.698298684157265);
		\draw [line width=1.pt] (2.993657089646132,-0.5747290551732837)-- (-0.026262835613412694,8.696825546232356);
		\draw [line width=1.pt] (2.993657089646132,-0.5747290551732837)-- (-1.8879349333421296,7.866343175469165);
		\draw [line width=1.pt] (2.993657089646132,-0.5747290551732837)-- (-3.2508694102243165,6.350449518468134);
		\draw [line width=1.pt] (2.993657089646132,-0.5747290551732837)-- (-3.8794025133360366,4.411256359986279);
		\draw [line width=1.pt] (2.993657089646132,-0.5747290551732837)-- (-3.6648551591052874,2.3840678141418907);
		\draw [line width=1.pt] (2.993657089646132,-0.5747290551732837)-- (-2.6443245341800923,0.6194031969412541);
		\draw [line width=1.pt] (2.993657089646132,-0.5747290551732837)-- (-0.9942696548396859,-0.5776109478268854);
		\draw [line width=1.pt] (-0.9942696548396859,-0.5776109478268854)-- (-2.6443245341800923,0.6194031969412541);
		\draw [line width=1.pt] (-0.9942696548396859,-0.5776109478268854)-- (-3.6648551591052874,2.3840678141418907);
		\draw [line width=1.pt] (-0.9942696548396859,-0.5776109478268854)-- (-3.8794025133360366,4.411256359986279);
		\draw [line width=1.pt] (-0.9942696548396859,-0.5776109478268854)-- (-3.2508694102243165,6.350449518468134);
		\draw [line width=1.pt] (-0.9942696548396859,-0.5776109478268854)-- (-1.8879349333421296,7.866343175469165);
		\draw [line width=1.pt] (-0.9942696548396859,-0.5776109478268854)-- (-0.026262835613412694,8.696825546232356);
		\draw [line width=1.pt] (-0.9942696548396859,-0.5776109478268854)-- (2.0122468645044806,8.698298684157265);
		\draw [line width=1.pt] (-0.9942696548396859,-0.5776109478268854)-- (3.8751173205819267,7.870507870313641);
		\draw [line width=1.pt] (-0.9942696548396859,-0.5776109478268854)-- (5.240241307116339,6.356585656658499);
		\draw [line width=1.pt] (-0.9942696548396859,-0.5776109478268854)-- (5.871576485073833,4.418302947484353);
		\draw [line width=1.pt] (-0.9942696548396859,-0.5776109478268854)-- (5.659959266685525,2.390806431953024);
		\draw [line width=1.pt] (-0.9942696548396859,-0.5776109478268854)-- (4.641980191492829,0.6246686768274756);
		\draw [line width=1.pt] (4.641980191492829,0.6246686768274756)-- (5.659959266685525,2.390806431953024);
		\draw [line width=1.pt] (4.641980191492829,0.6246686768274756)-- (5.871576485073833,4.418302947484353);
		\draw [line width=1.pt] (4.641980191492829,0.6246686768274756)-- (5.240241307116339,6.356585656658499);
		\draw [line width=1.pt] (4.641980191492829,0.6246686768274756)-- (3.8751173205819267,7.870507870313641);
		\draw [line width=1.pt] (4.641980191492829,0.6246686768274756)-- (2.0122468645044806,8.698298684157265);
		\draw [line width=1.pt] (4.641980191492829,0.6246686768274756)-- (-0.026262835613412694,8.696825546232356);
		\draw [line width=1.pt] (4.641980191492829,0.6246686768274756)-- (-1.8879349333421296,7.866343175469165);
		\draw [line width=1.pt] (4.641980191492829,0.6246686768274756)-- (-3.2508694102243165,6.350449518468134);
		\draw [line width=1.pt] (4.641980191492829,0.6246686768274756)-- (-3.8794025133360366,4.411256359986279);
		\draw [line width=1.pt] (4.641980191492829,0.6246686768274756)-- (-3.6648551591052874,2.3840678141418907);
		\draw [line width=1.pt] (4.641980191492829,0.6246686768274756)-- (-2.6443245341800923,0.6194031969412541);
		\draw [line width=1.pt] (5.871576485073833,4.418302947484353)-- (5.659959266685525,2.390806431953024);
		\draw [line width=1.pt] (5.871576485073833,4.418302947484353)-- (5.240241307116339,6.356585656658499);
		\draw [line width=1.pt] (5.871576485073833,4.418302947484353)-- (3.8751173205819267,7.870507870313641);
		\draw [line width=1.pt] (5.871576485073833,4.418302947484353)-- (2.0122468645044806,8.698298684157265);
		\draw [line width=1.pt] (5.871576485073833,4.418302947484353)-- (-0.026262835613412694,8.696825546232356);
		\draw [line width=1.pt] (5.871576485073833,4.418302947484353)-- (-1.8879349333421296,7.866343175469165);
		\draw [line width=1.pt] (5.871576485073833,4.418302947484353)-- (-3.2508694102243165,6.350449518468134);
		\draw [line width=1.pt] (5.871576485073833,4.418302947484353)-- (-3.8794025133360366,4.411256359986279);
		\draw [line width=1.pt] (5.871576485073833,4.418302947484353)-- (-3.6648551591052874,2.3840678141418907);
		\draw [line width=1.pt] (5.871576485073833,4.418302947484353)-- (-2.6443245341800923,0.6194031969412541);
		\draw [line width=1.pt] (2.0122468645044806,8.698298684157265)-- (5.240241307116339,6.356585656658499);
		\draw [line width=1.pt] (2.0122468645044806,8.698298684157265)-- (3.8751173205819267,7.870507870313641);
		\draw [line width=1.pt] (2.0122468645044806,8.698298684157265)-- (-0.026262835613412694,8.696825546232356);
		\draw [line width=1.pt] (2.0122468645044806,8.698298684157265)-- (-1.8879349333421296,7.866343175469165);
		\draw [line width=1.pt] (2.0122468645044806,8.698298684157265)-- (-3.2508694102243165,6.350449518468134);
		\draw [line width=1.pt] (2.0122468645044806,8.698298684157265)-- (-3.8794025133360366,4.411256359986279);
		\draw [line width=1.pt] (2.0122468645044806,8.698298684157265)-- (-3.6648551591052874,2.3840678141418907);
		\draw [line width=1.pt] (2.0122468645044806,8.698298684157265)-- (-2.6443245341800923,0.6194031969412541);
		\draw [line width=1.pt] (-0.026262835613412694,8.696825546232356)-- (3.8751173205819267,7.870507870313641);
		\draw [line width=1.pt] (-0.026262835613412694,8.696825546232356)-- (5.240241307116339,6.356585656658499);
		\draw [line width=1.pt] (-0.026262835613412694,8.696825546232356)-- (5.659959266685525,2.390806431953024);
		\draw [line width=1.pt] (2.0122468645044806,8.698298684157265)-- (5.659959266685525,2.390806431953024);
		\draw [line width=1.pt] (-0.026262835613412694,8.696825546232356)-- (-2.6443245341800923,0.6194031969412541);
		\draw [line width=1.pt] (-0.026262835613412694,8.696825546232356)-- (-1.8879349333421296,7.866343175469165);
		\draw [line width=1.pt] (-0.026262835613412694,8.696825546232356)-- (-3.2508694102243165,6.350449518468134);
		\draw [line width=1.pt] (-0.026262835613412694,8.696825546232356)-- (-3.8794025133360366,4.411256359986279);
		\draw [line width=1.pt] (-0.026262835613412694,8.696825546232356)-- (-3.6648551591052874,2.3840678141418907);
		\draw [line width=1.pt] (-3.8794025133360366,4.411256359986279)-- (-3.2508694102243165,6.350449518468134);
		\draw [line width=1.pt] (-3.8794025133360366,4.411256359986279)-- (-1.8879349333421296,7.866343175469165);
		\draw [line width=1.pt] (-3.8794025133360366,4.411256359986279)-- (3.8751173205819267,7.870507870313641);
		\draw [line width=1.pt] (-3.8794025133360366,4.411256359986279)-- (5.240241307116339,6.356585656658499);
		\draw [line width=1.pt] (-3.8794025133360366,4.411256359986279)-- (5.659959266685525,2.390806431953024);
		\draw [line width=1.pt] (-3.8794025133360366,4.411256359986279)-- (-2.6443245341800923,0.6194031969412541);
		\draw [line width=1.pt] (-3.8794025133360366,4.411256359986279)-- (-3.6648551591052874,2.3840678141418907);
		\draw [line width=1.pt] (-2.6443245341800923,0.6194031969412541)-- (5.659959266685525,2.390806431953024);
		\draw [line width=1.pt] (-2.6443245341800923,0.6194031969412541)-- (5.240241307116339,6.356585656658499);
		\draw [line width=1.pt] (-2.6443245341800923,0.6194031969412541)-- (3.8751173205819267,7.870507870313641);
		\draw [line width=1.pt] (-2.6443245341800923,0.6194031969412541)-- (-1.8879349333421296,7.866343175469165);
		\draw [line width=1.pt] (-2.6443245341800923,0.6194031969412541)-- (-3.2508694102243165,6.350449518468134);
		\draw [line width=1.pt] (-2.6443245341800923,0.6194031969412541)-- (-3.6648551591052874,2.3840678141418907);
		\draw [line width=1.pt] (5.659959266685525,2.390806431953024)-- (3.8751173205819267,7.870507870313641);
		\draw [line width=1.pt] (-1.8879349333421296,7.866343175469165)-- (5.659959266685525,2.390806431953024);
		\draw [line width=1.pt] (-3.6648551591052874,2.3840678141418907)-- (5.659959266685525,2.390806431953024);
		\draw [line width=1.pt] (5.240241307116339,6.356585656658499)-- (-3.2508694102243165,6.350449518468134);
		\draw [line width=1.pt] (3.8751173205819267,7.870507870313641)-- (-3.6648551591052874,2.3840678141418907);
		\draw [line width=1.pt] (3.8751173205819267,7.870507870313641)-- (-1.8879349333421296,7.866343175469165);
		\draw [line width=1.pt] (-1.8879349333421296,7.866343175469165)-- (-3.6648551591052874,2.3840678141418907);
		\draw [line width=1.pt] (-0.9942696548396859,-0.5776109478268854)-- (1.,-1.);
		\draw [line width=1.pt] (-2.6443245341800923,0.6194031969412541)-- (1.,-1.);
		\draw [line width=1.pt] (-3.6648551591052874,2.3840678141418907)-- (1.,-1.);
		\draw [line width=1.pt] (-3.8794025133360366,4.411256359986279)-- (1.,-1.);
		\draw [line width=1.pt] (-3.2508694102243165,6.350449518468134)-- (1.,-1.);
		\draw [line width=1.pt] (-1.8879349333421296,7.866343175469165)-- (1.,-1.);
		\draw [line width=1.pt] (-0.026262835613412694,8.696825546232356)-- (1.,-1.);
		\draw [line width=1.pt] (2.0122468645044806,8.698298684157265)-- (1.,-1.);
		\draw [line width=1.pt] (3.8751173205819267,7.870507870313641)-- (1.,-1.);
		\draw [line width=1.pt] (5.240241307116339,6.356585656658499)-- (1.,-1.);
		\draw [line width=1.pt] (5.871576485073833,4.418302947484353)-- (1.,-1.);
		\draw [line width=1.pt] (5.659959266685525,2.390806431953024)-- (1.,-1.);
		\draw [line width=1.pt] (4.641980191492829,0.6246686768274756)-- (1.,-1.);
		\draw [line width=1.pt] (2.993657089646132,-0.5747290551732837)-- (1.,-1.);
		\draw [line width=1.pt] (-4.76303418111067,-0.9849782327152496)-- (1.,-1.);
		\draw [line width=1.pt] (-4.632934599631879,-2.2177177104782277)-- (1.,-1.);
		\draw [line width=1.pt] (-4.206057079291719,-3.471792530391428)-- (1.,-1.);
		\draw [line width=1.pt] (-3.393564195655358,-4.729528426597333)-- (1.,-1.);
		\draw [line width=1.pt] (-2.7330563266424126,-5.390567057711037)-- (1.,-1.);
		\draw [line width=1.pt] (-1.9550595614531452,-5.947768346881919)-- (1.,-1.);
		\draw [line width=1.pt] (6.762864846375749,-0.9533375038290286)-- (1.,-1.);
		\draw [line width=1.pt] (6.626501119770371,-2.247106160423023)-- (1.,-1.);
		\draw [line width=1.pt] (6.191394438175647,-3.502441290704189)-- (1.,-1.);
		\draw [line width=1.pt] (5.404330695240475,-4.716807736903384)-- (1.,-1.);
		\draw [line width=1.pt] (4.72012682257888,-5.401527581428005)-- (1.,-1.);
		\draw [line width=1.pt] (3.891677844983721,-5.985076515658771)-- (1.,-1.);
		\draw [line width=1.pt] (0.9853608434239401,-6.7630351657123375)-- (1.,-1.);
		\draw [line width=1.pt] (-0.28027978423590794,-6.619045497254774)-- (1.,-1.);
		\draw [line width=1.pt] (2.296625560040442,-6.615296143854935)-- (1.,-1.);
		\begin{scriptsize}
			\draw [fill=ffffff] (1.,-1.) circle (2.5pt);
			\draw [fill=magentaIBM] (2.993657089646132,-0.5747290551732837) circle (2.5pt);
			\draw [fill=magentaIBM] (4.641980191492829,0.6246686768274756) circle (2.5pt);
			\draw [fill=celesteIBM] (5.659959266685525,2.390806431953024) circle (2.5pt);
			\draw [fill=magentaIBM] (5.871576485073833,4.418302947484353) circle (2.5pt);
			\draw [fill=gialloIBM] (5.240241307116339,6.356585656658499) circle (2.5pt);
			\draw [fill=celesteIBM] (3.8751173205819267,7.870507870313641) circle (2.5pt);
			\draw [fill=magentaIBM] (2.0122468645044806,8.698298684157265) circle (2.5pt);
			\draw [fill=magentaIBM] (-0.026262835613412694,8.696825546232356) circle (2.5pt);
			\draw [fill=celesteIBM] (-1.8879349333421296,7.866343175469165) circle (2.5pt);
			\draw [fill=gialloIBM] (-3.2508694102243165,6.350449518468134) circle (2.5pt);
			\draw [fill=magentaIBM] (-3.8794025133360366,4.411256359986279) circle (2.5pt);
			\draw [fill=celesteIBM] (-3.6648551591052874,2.3840678141418907) circle (2.5pt);
			\draw [fill=magentaIBM] (-2.6443245341800923,0.6194031969412541) circle (2.5pt);
			\draw [fill=magentaIBM] (-0.9942696548396859,-0.5776109478268854) circle (2.5pt);
			\draw [fill=arancioneIBM] (-4.76303418111067,-0.9849782327152496) circle (2.5pt);
			\draw [fill=arancioneIBM] (-4.632934599631879,-2.2177177104782277) circle (2.5pt);
			\draw [fill=arancioneIBM] (-4.206057079291719,-3.471792530391428) circle (2.5pt);
			\draw [fill=arancioneIBM] (-3.393564195655358,-4.729528426597333) circle (2.5pt);
			\draw [fill=arancioneIBM] (-1.9550595614531452,-5.947768346881919) circle (2.5pt);
			\draw [fill=arancioneIBM] (3.891677844983721,-5.985076515658771) circle (2.5pt);
			\draw [fill=arancioneIBM] (5.404330695240475,-4.716807736903384) circle (2.5pt);
			\draw [fill=arancioneIBM] (6.191394438175647,-3.502441290704189) circle (2.5pt);
			\draw [fill=arancioneIBM] (6.626501119770371,-2.247106160423023) circle (2.5pt);
			\draw [fill=arancioneIBM] (6.762864846375749,-0.9533375038290286) circle (2.5pt);
			\draw [fill=arancioneIBM] (-0.28027978423590794,-6.619045497254774) circle (2.5pt);
			\draw [fill=arancioneIBM] (2.296625560040442,-6.615296143854935) circle (2.5pt);
			\draw [fill=arancioneIBM] (0.9853608434239401,-6.7630351657123375) circle (2.5pt);
			\draw [fill=arancioneIBM] (-2.7330563266424126,-5.390567057711037) circle (2.5pt);
			\draw [fill=arancioneIBM] (4.72012682257888,-5.401527581428005) circle (2.5pt);
		\end{scriptsize}
	\end{tikzpicture}
	
	\caption{$\mathcal{P}(D_{30})$}
	\label{PGD15}
\end{figure}
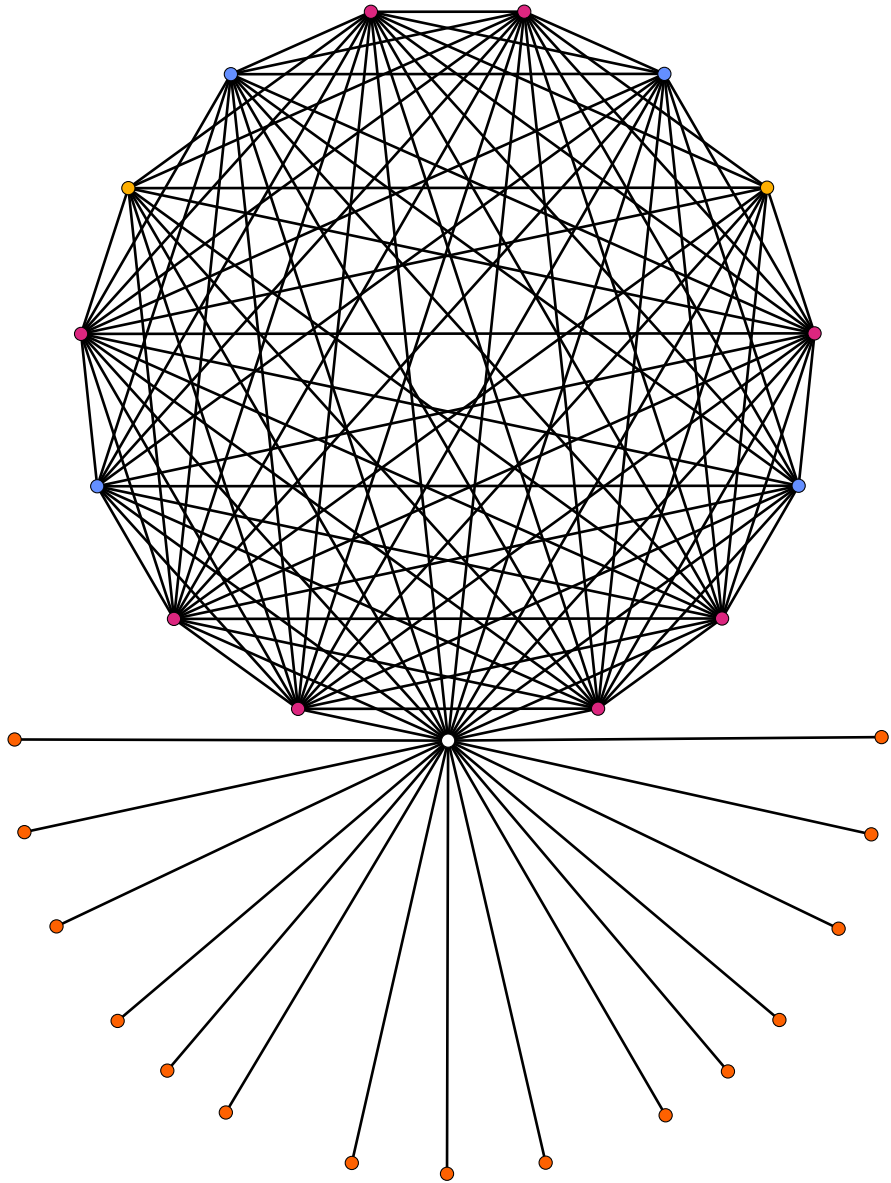

Our friend asks for a rematch. This time she prepares for us the graph $\mathcal{P}(D_{18})$ in Figure \ref{fig:subfig} (a). As before imagine it without colours.
This time a delicate situation arises. We did not face that in the previous case.
The white and orange vertices are, as before, the identity and the involutions.
For the remaining vertices it is easily checked, by the study of the closed neighbourhoods, that they belong to a unique $\mathtt{N}$-class $C$. Since
$\hat{C}=C \cupdot \{1\}$ and $|\hat{C}|= 9$, we have that $C$ is a critical class. Note that we have $|C|=8=3^2-1$ and $|\hat{C}|=9=3^2$ as in the previous case.
Without too much effort it is checked that there exists no $x\in G\setminus \hat{C}$ joined with a vertex of $C$ and hence,
by Proposition \ref{LemmaCriticalClassMio}, $C$ is of compound type.

As a consequence, there exists $y \in C$ with $o(y)=3^2$ such that $ C=\{z\in \langle y \rangle \, | \, 3\leq o(z) \leq 3^2\}.$
Here we face the delicate situation. 
We have seen that knowing the partition in $\diamond$-classes allows us to use Lemma \ref{lemmaClassiDiamond}. But whenever  a compound class appears, we cannot directly see the $\diamond$-partition. However, we can find it up to a graph isomorphism.
We arbitrarily partition the vertices of $C$ in two sets, one formed by $6$ elements and one by $2$ elements. In Figure \ref{fig:subfig} (a) such a partition is revealed by the two colours magenta and blue.
 In $C$ the partition in $\diamond$-classes has exactly two sets with the same sizes as those in our partition. By an argument in the proof of the Main Theorem, our arbitrary partition of the vertices in $C$ is, up to a graph isomorphism, just the
  partition of $C$ into $\diamond$-classes.

We now have in hands, up to an isomorphism, the partition of $G$ into $\diamond$-classes. It is composed by the sets of the partition above and the singletons containing each an involution or $1$. Mimicking the instructions in  Lemma \ref{lemmaClassiDiamond}, we now assign the directions on the edges obtaining the digraph shown in Figure \ref{fig:subfig} (b). By the Main Theorem, that directed graph is just $\vec{\mathcal{P}}(D_{18}).$

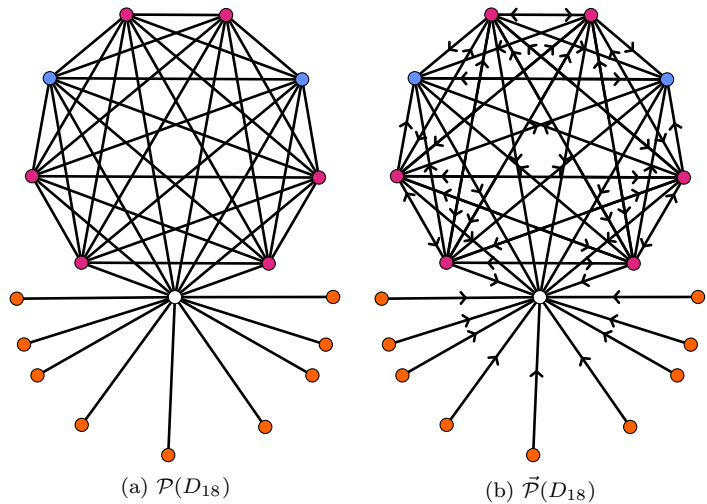
\begin{figure}[h!]
	\centering
	\subfloat[][$\mathcal{P}(D_{18})$]
	{\begin{tikzpicture}[line cap=round,line join=round,>=triangle 45,x=1.0cm,y=1.0cm]
			\clip(1.,-1.2) rectangle (5.5,5.);
			\draw [line width=1.pt] (2.,1.5)-- (3.2427260631821486,1.0409636115652288);
			\draw [line width=1.pt] (1.3430795078613198,2.6504511901978782)-- (3.2427260631821486,1.0409636115652288);
			\draw [line width=1.pt] (1.579344985895828,3.9540082844245044)-- (3.2427260631821486,1.0409636115652288);
			\draw [line width=1.pt] (2.5982451911328757,4.800722430867884)-- (3.2427260631821486,1.0409636115652288);
			\draw [line width=1.pt] (3.92302539372762,4.794406670227618)-- (3.2427260631821486,1.0409636115652288);
			\draw [line width=1.pt] (4.93380621359405,3.938016217099146)-- (3.2427260631821486,1.0409636115652288);
			\draw [line width=1.pt] (5.157632071539149,2.6322656822209947)-- (3.2427260631821486,1.0409636115652288);
			\draw [line width=1.pt] (4.4897723610830855,1.4881302526633837)-- (3.2427260631821486,1.0409636115652288);
			\draw [line width=1.pt] (5.345802988804287,1.0436511761936775)-- (3.2427260631821486,1.0409636115652288);
			\draw [line width=1.pt] (5.2487584059961065,0.4094780474608347)-- (3.2427260631821486,1.0409636115652288);
			\draw [line width=1.pt] (5.070110679918585,0.)-- (3.2427260631821486,1.0409636115652288);
			\draw [line width=1.pt] (4.443418837789942,-0.6856737675889872)-- (3.2427260631821486,1.0409636115652288);
			\draw [line width=1.pt] (3.1534426699475673,-1.0602189730238445)-- (3.2427260631821486,1.0409636115652288);
			\draw [line width=1.pt] (2.0035395443744655,-0.658258725179788)-- (3.2427260631821486,1.0409636115652288);
			\draw [line width=1.pt] (1.415341446445712,0.)-- (3.2427260631821486,1.0409636115652288);
			\draw [line width=1.pt] (1.236070276851942,0.41146195686096987)-- (3.2427260631821486,1.0409636115652288);
			\draw [line width=1.pt] (1.1397454373691325,1.020659288306518)-- (3.2427260631821486,1.0409636115652288);
			\draw [line width=1.pt] (4.4897723610830855,1.4881302526633837)-- (5.157632071539149,2.6322656822209947);
			\draw [line width=1.pt] (4.4897723610830855,1.4881302526633837)-- (4.93380621359405,3.938016217099146);
			\draw [line width=1.pt] (4.4897723610830855,1.4881302526633837)-- (3.92302539372762,4.794406670227618);
			\draw [line width=1.pt] (4.4897723610830855,1.4881302526633837)-- (2.5982451911328757,4.800722430867884);
			\draw [line width=1.pt] (4.4897723610830855,1.4881302526633837)-- (1.579344985895828,3.9540082844245044);
			\draw [line width=1.pt] (4.4897723610830855,1.4881302526633837)-- (1.3430795078613198,2.6504511901978782);
			\draw [line width=1.pt] (4.4897723610830855,1.4881302526633837)-- (2.,1.5);
			\draw [line width=1.pt] (5.157632071539149,2.6322656822209947)-- (1.579344985895828,3.9540082844245044);
			\draw [line width=1.pt] (5.157632071539149,2.6322656822209947)-- (2.,1.5);
			\draw [line width=1.pt] (4.93380621359405,3.938016217099146)-- (1.579344985895828,3.9540082844245044);
			\draw [line width=1.pt] (3.92302539372762,4.794406670227618)-- (2.,1.5);
			\draw [line width=1.pt] (5.157632071539149,2.6322656822209947)-- (3.92302539372762,4.794406670227618);
			\draw [line width=1.pt] (5.157632071539149,2.6322656822209947)-- (4.93380621359405,3.938016217099146);
			\draw [line width=1.pt] (5.157632071539149,2.6322656822209947)-- (2.5982451911328757,4.800722430867884);
			\draw [line width=1.pt] (5.157632071539149,2.6322656822209947)-- (1.3430795078613198,2.6504511901978782);
			\draw [line width=1.pt] (3.92302539372762,4.794406670227618)-- (4.93380621359405,3.938016217099146);
			\draw [line width=1.pt] (3.92302539372762,4.794406670227618)-- (1.3430795078613198,2.6504511901978782);
			\draw [line width=1.pt] (3.92302539372762,4.794406670227618)-- (1.579344985895828,3.9540082844245044);
			\draw [line width=1.pt] (3.92302539372762,4.794406670227618)-- (2.5982451911328757,4.800722430867884);
			\draw [line width=1.pt] (2.5982451911328757,4.800722430867884)-- (1.579344985895828,3.9540082844245044);
			\draw [line width=1.pt] (2.5982451911328757,4.800722430867884)-- (1.3430795078613198,2.6504511901978782);
			\draw [line width=1.pt] (2.5982451911328757,4.800722430867884)-- (2.,1.5);
			\draw [line width=1.pt] (2.5982451911328757,4.800722430867884)-- (4.93380621359405,3.938016217099146);
			\draw [line width=1.pt] (1.579344985895828,3.9540082844245044)-- (1.3430795078613198,2.6504511901978782);
			\draw [line width=1.pt] (1.579344985895828,3.9540082844245044)-- (2.,1.5);
			\draw [line width=1.pt] (1.3430795078613198,2.6504511901978782)-- (2.,1.5);
			\draw [line width=1.pt] (1.3430795078613198,2.6504511901978782)-- (4.93380621359405,3.938016217099146);
			\draw [line width=1.pt] (4.93380621359405,3.938016217099146)-- (2.,1.5);
			\begin{scriptsize}
				\draw [fill=magentaIBM] (2.,1.5) circle (2.5pt);
				\draw [fill=ffffff] (3.2427260631821486,1.0409636115652288) circle (2.5pt);
				\draw [fill=magentaIBM] (4.4897723610830855,1.4881302526633837) circle (2.5pt);
				\draw [fill=magentaIBM] (5.157632071539149,2.6322656822209947) circle (2.5pt);
				\draw [fill=celesteIBM] (4.93380621359405,3.938016217099146) circle (2.5pt);
				\draw [fill=magentaIBM] (3.92302539372762,4.794406670227618) circle (2.5pt);
				\draw [fill=magentaIBM] (2.5982451911328757,4.800722430867884) circle (2.5pt);
				\draw [fill=celesteIBM] (1.579344985895828,3.9540082844245044) circle (2.5pt);
				\draw [fill=magentaIBM] (1.3430795078613198,2.6504511901978782) circle (2.5pt);
				\draw [fill=arancioneIBM] (1.1397454373691325,1.020659288306518) circle (2.5pt);
				\draw [fill=arancioneIBM] (5.345802988804287,1.0436511761936775) circle (2.5pt);
				\draw [fill=arancioneIBM] (1.415341446445712,0.) circle (2.5pt);
				\draw [fill=arancioneIBM] (5.070110679918585,0.) circle (2.5pt);
				\draw [fill=arancioneIBM] (1.236070276851942,0.41146195686096987) circle (2.5pt);
				\draw [fill=arancioneIBM] (5.2487584059961065,0.4094780474608347) circle (2.5pt);
				\draw [fill=arancioneIBM] (2.0035395443744655,-0.658258725179788) circle (2.5pt);
				\draw [fill=arancioneIBM] (4.443418837789942,-0.6856737675889872) circle (2.5pt);
				\draw [fill=arancioneIBM] (3.1534426699475673,-1.0602189730238445) circle (2.5pt);
			\end{scriptsize}
	\end{tikzpicture}} \quad
	\subfloat[][$\vec{\mathcal{P}}(D_{18})$]
	{\begin{tikzpicture}[line cap=round,line join=round,>=triangle 45,x=1.0cm,y=1.0cm]
			\clip(1.,-1.2) rectangle (5.5,5.);
			\draw [line width=1.pt] (2.,1.5)-- (3.2427260631821486,1.0409636115652288);
			\draw [line width=1.pt] (2.681086097180521,1.2484213846273409) -- (2.594890526204747,1.1988141270752775);
			\draw [line width=1.pt] (2.681086097180521,1.2484213846273409) -- (2.647835536977402,1.3421494844899506);
			\draw [line width=1.pt] (1.3430795078613198,2.6504511901978782)-- (3.2427260631821486,1.0409636115652288);
			\draw [line width=1.pt] (2.3414790011414994,1.8045508944577628) -- (2.2435149778131858,1.787415942137835);
			\draw [line width=1.pt] (2.3414790011414994,1.8045508944577628) -- (2.3422905932302824,1.903998859625271);
			\draw [line width=1.pt] (1.579344985895828,3.9540082844245044)-- (3.2427260631821486,1.0409636115652288);
			\draw [line width=1.pt] (2.4426058816767378,2.442197438382612) -- (2.3446893130042827,2.4596015194295666);
			\draw [line width=1.pt] (2.4426058816767378,2.442197438382612) -- (2.4773817360736934,2.5353703765601665);
			\draw [line width=1.pt] (3.92302539372762,4.794406670227618)-- (3.2427260631821486,1.0409636115652288);
			\draw [line width=1.pt] (3.571521256170747,2.8550386509381007) -- (3.507699940504898,2.931310507637388);
			\draw [line width=1.pt] (3.571521256170747,2.8550386509381007) -- (3.658051516404871,2.904059774155458);
			\draw [line width=1.pt] (4.93380621359405,3.938016217099146)-- (3.2427260631821486,1.0409636115652288);
			\draw [line width=1.pt] (4.056170063095382,2.434504928238221) -- (4.022284155075341,2.5280052046834482);
			\draw [line width=1.pt] (4.056170063095382,2.434504928238221) -- (4.154248121700858,2.4509746239809256);
			\draw [line width=1.pt] (5.157632071539149,2.6322656822209947)-- (3.2427260631821486,1.0409636115652288);
			\draw [line width=1.pt] (4.151212649427276,1.795923165478403) -- (4.151349289663,1.89537434841316);
			\draw [line width=1.pt] (4.151212649427276,1.795923165478403) -- (4.2490088450583,1.7778549453730632);
			\draw [line width=1.pt] (4.4897723610830855,1.4881302526633837)-- (3.2427260631821486,1.0409636115652288);
			\draw [line width=1.pt] (3.8063185242287143,1.243056948579774) -- (3.8404612318911786,1.3364637575989888);
			\draw [line width=1.pt] (3.8063185242287143,1.243056948579774) -- (3.892037192374054,1.1926301066296228);
			\draw [line width=1.pt] (5.345802988804287,1.0436511761936775)-- (3.2427260631821486,1.0409636115652288);
			\draw [line width=1.pt] (4.230597422044172,1.0422260323969108) -- (4.294166892214167,1.118707918618308);
			\draw [line width=1.pt] (4.230597422044172,1.0422260323969108) -- (4.294362159772268,0.9659068691405985);
			\draw [line width=1.pt] (5.2487584059961065,0.40947804746083494)-- (3.2427260631821486,1.0409636115652288);
			\draw [line width=1.pt] (4.185012991574253,0.7443379890245708) -- (4.268682826002975,0.7980959211308819);
			\draw [line width=1.pt] (4.185012991574253,0.7443379890245708) -- (4.2228016431752815,0.6523457378951816);
			\draw [line width=1.pt] (5.070110679918585,0.)-- (3.2427260631821486,1.0409636115652288);
			\draw [line width=1.pt] (4.1010973868868,0.5519952219859193) -- (4.194234470994333,0.5868669873788962);
			\draw [line width=1.pt] (4.1010973868868,0.5519952219859193) -- (4.1186022721064015,0.45409662418633323);
			\draw [line width=1.pt] (4.443418837789942,-0.6856737675889872)-- (3.2427260631821486,1.0409636115652288);
			\draw [line width=1.pt] (3.8067235034929596,0.2299159540942808) -- (3.905797689013438,0.22126365837982462);
			\draw [line width=1.pt] (3.8067235034929596,0.2299159540942808) -- (3.7803472119586545,0.1340261855964171);
			\draw [line width=1.pt] (3.1534426699475673,-1.0602189730238445)-- (3.2427260631821486,1.0409636115652288);
			\draw [line width=1.pt] (3.200787270585503,0.05398207524259128) -- (3.274416073731137,-0.012871165554081913);
			\draw [line width=1.pt] (3.200787270585503,0.05398207524259128) -- (3.1217526593985787,-0.0063841959045346855);
			\draw [line width=1.pt] (2.0035395443744655,-0.658258725179788)-- (3.2427260631821486,1.0409636115652288);
			\draw [line width=1.pt] (2.660647083899965,0.24279353116627567) -- (2.6848621093465748,0.14633530704672912);
			\draw [line width=1.pt] (2.660647083899965,0.24279353116627567) -- (2.5614034982100393,0.23636957933870917);
			\draw [line width=1.pt] (1.415341446445712,0.)-- (3.2427260631821486,1.0409636115652288);
			\draw [line width=1.pt] (2.3843547394774975,0.5519952219859193) -- (2.3668498542578957,0.45409662418633323);
			\draw [line width=1.pt] (2.3843547394774975,0.5519952219859193) -- (2.2912176553699646,0.5868669873788962);
			\draw [line width=1.pt] (1.236070276851942,0.41146195686096987)-- (3.2427260631821486,1.0409636115652288);
			\draw [line width=1.pt] (2.300146286732008,0.74526988421637) -- (2.26226669002097,0.6533150441551434);
			\draw [line width=1.pt] (2.300146286732008,0.74526988421637) -- (2.2165296500131206,0.7991105242710549);
			\draw [line width=1.pt] (1.1397454373691325,1.020659288306518)-- (3.2427260631821486,1.0409636115652288);
			\draw [line width=1.pt] (2.254899938910518,1.0314261290490232) -- (2.191973365211421,0.9544144235740202);
			\draw [line width=1.pt] (2.254899938910518,1.0314261290490232) -- (2.190498135339861,1.1072084762977266);
			\draw [line width=1.pt] (4.4897723610830855,1.4881302526633837)-- (4.93380621359405,3.938016217099146);
			\draw [line width=1.pt] (4.7231437596227055,2.775719724839587) -- (4.786965075288554,2.6994478681402994);
			\draw [line width=1.pt] (4.7231437596227055,2.775719724839587) -- (4.636613499388582,2.72669860162223);
			\draw [line width=1.pt] (4.4897723610830855,1.4881302526633837)-- (3.92302539372762,4.794406670227618);
			\draw [line width=1.pt] (4.4897723610830855,1.4881302526633837)-- (2.5982451911328757,4.800722430867884);
			\draw [line width=1.pt] (4.4897723610830855,1.4881302526633837)-- (1.579344985895828,3.9540082844245044);
			\draw [line width=1.pt] (2.9859824578696914,2.762225774967734) -- (3.083946481198005,2.7793607272876617);
			\draw [line width=1.pt] (2.9859824578696914,2.762225774967734) -- (2.9851708657809084,2.6627778098002257);
			\draw [line width=1.pt] (4.4897723610830855,1.4881302526633837)-- (1.3430795078613198,2.6504511901978782);
			\draw [line width=1.pt] (5.157632071539149,2.6322656822209947)-- (1.579344985895828,3.9540082844245044);
			\draw [line width=1.pt] (3.3087654631280423,3.3151974044780226) -- (3.3949610341038166,3.3648046620300858);
			\draw [line width=1.pt] (3.3087654631280423,3.3151974044780226) -- (3.3420160233311615,3.221469304615413);
			\draw [line width=1.pt] (5.157632071539149,2.6322656822209947)-- (2.,1.5);
			\draw [line width=1.pt] (4.93380621359405,3.938016217099146)-- (1.579344985895828,3.9540082844245044);
			\draw [line width=1.pt] (3.92302539372762,4.794406670227618)-- (2.,1.5);
			\draw [line width=1.pt] (5.157632071539149,2.6322656822209947)-- (4.93380621359405,3.938016217099146);
			\draw [line width=1.pt] (5.034962494910526,3.3478928522300517) -- (5.121021425650578,3.2980489268473576);
			\draw [line width=1.pt] (5.034962494910526,3.3478928522300517) -- (4.970416859482621,3.272232972472782);
			\draw [line width=1.pt] (5.157632071539149,2.6322656822209947)-- (2.5982451911328757,4.800722430867884);
			\draw [line width=1.pt] (5.157632071539149,2.6322656822209947)-- (1.3430795078613198,2.6504511901978782);
			\draw [line width=1.pt] (3.92302539372762,4.794406670227618)-- (4.93380621359405,3.938016217099146);
			\draw [line width=1.pt] (4.476992019280601,4.3250549372395914) -- (4.3790279959522875,4.307919984919663);
			\draw [line width=1.pt] (4.476992019280601,4.3250549372395914) -- (4.477803611369384,4.4245029024071005);
			\draw [line width=1.pt] (3.92302539372762,4.794406670227618)-- (1.3430795078613198,2.6504511901978782);
			\draw [line width=1.pt] (3.92302539372762,4.794406670227618)-- (1.579344985895828,3.9540082844245044);
			\draw [line width=1.pt] (2.691254501907822,4.35271749379153) -- (2.725397209570286,4.446124302810744);
			\draw [line width=1.pt] (2.691254501907822,4.35271749379153) -- (2.776973170053162,4.3022906518413775);
			\draw [line width=1.pt] (2.5982451911328757,4.800722430867884)-- (1.579344985895828,3.9540082844245044);
			\draw [line width=1.pt] (2.0398286705809783,4.336673876231486) -- (2.0399653108167013,4.436125059166242);
			\draw [line width=1.pt] (2.0398286705809783,4.336673876231486) -- (2.1376248662120023,4.318605656126145);
			\draw [line width=1.pt] (2.5982451911328757,4.800722430867884)-- (2.,1.5);
			\draw [line width=1.pt] (2.5982451911328757,4.800722430867884)-- (4.93380621359405,3.938016217099146);
			\draw [line width=1.pt] (3.8257487679529096,4.3473089028282415) -- (3.7395531969771354,4.297701645276178);
			\draw [line width=1.pt] (3.8257487679529096,4.3473089028282415) -- (3.792498207749791,4.441037002690851);
			\draw [line width=1.pt] (1.3430795078613198,2.6504511901978782)-- (4.93380621359405,3.938016217099146);
			\draw [line width=1.pt] (3.1983735486315874,3.3157236871830436) -- (3.164230840969123,3.222316878163829);
			\draw [line width=1.pt] (3.1983735486315874,3.3157236871830436) -- (3.112654880486247,3.366150529133195);
			\draw [line width=1.pt] (4.823702216311117,2.060197967442189)-- (5.157632071539149,2.6322656822209947);
			\draw [line width=1.pt] (5.022763219217851,2.401216810925557) -- (5.056649127237892,2.3077165344803303);
			\draw [line width=1.pt] (5.022763219217851,2.401216810925557) -- (4.924685160612375,2.3847471151828525);
			\draw [line width=1.pt] (4.823702216311117,2.060197967442189)-- (4.4897723610830855,1.4881302526633837);
			\draw [line width=1.pt] (4.624641213404384,1.7191791239588206) -- (4.590755305384343,1.8126794004040474);
			\draw [line width=1.pt] (4.624641213404384,1.7191791239588206) -- (4.7227192720098605,1.7356488197015256);
			\draw [line width=1.pt] (3.2448861805415428,1.4940651263316918)-- (2.,1.5);
			\draw [line width=1.pt] (2.558776657840304,1.4973360866841345) -- (2.6228073184927174,1.573432282082407);
			\draw [line width=1.pt] (2.558776657840304,1.4973360866841345) -- (2.622078862048825,1.4206328442492855);
			\draw [line width=1.pt] (3.2448861805415428,1.4940651263316918)-- (4.4897723610830855,1.4881302526633837);
			\draw [line width=1.pt] (3.930995703242781,1.4907941659792487) -- (3.866965042590368,1.4146979705809763);
			\draw [line width=1.pt] (3.930995703242781,1.4907941659792487) -- (3.86769349903426,1.5674974084140978);
			\draw [line width=1.pt] (4.540328732633385,3.7133361762243062)-- (3.92302539372762,4.794406670227618);
			\draw [line width=1.pt] (4.200106706042753,4.309159932838216) -- (4.298023274715208,4.291755851791261);
			\draw [line width=1.pt] (4.200106706042753,4.309159932838216) -- (4.165330851645797,4.215986994660661);
			\draw [line width=1.pt] (4.540328732633385,3.7133361762243062)-- (5.157632071539149,2.6322656822209947);
			\draw [line width=1.pt] (4.880550759224017,3.117512419610396) -- (4.782634190551563,3.1349165006573503);
			\draw [line width=1.pt] (4.880550759224017,3.117512419610396) -- (4.915326613620972,3.2106853577879497);
			\draw [line width=1.pt] (3.2606352924302477,4.797564550547751)-- (2.5982451911328757,4.800722430867884);
			\draw [line width=1.pt] (2.8657738093510945,4.799447014226105) -- (2.929804470003508,4.875543209624378);
			\draw [line width=1.pt] (2.8657738093510945,4.799447014226105) -- (2.9290760135596154,4.722743771791257);
			\draw [line width=1.pt] (3.2606352924302477,4.797564550547751)-- (3.92302539372762,4.794406670227618);
			\draw [line width=1.pt] (3.655496775509401,4.795682086869396) -- (3.591466114856988,4.719585891471124);
			\draw [line width=1.pt] (3.655496775509401,4.795682086869396) -- (3.59219457130088,4.872385329304245);
			\draw [line width=1.pt] (1.9851289537725336,3.750370090970045)-- (2.5982451911328757,4.800722430867884);
			\draw [line width=1.pt] (2.3237831477454223,4.330531247012931) -- (2.3576690557654634,4.237030970567703);
			\draw [line width=1.pt] (2.3237831477454223,4.330531247012931) -- (2.225705089139946,4.314061551270226);
			\draw [line width=1.pt] (1.9851289537725336,3.750370090970045)-- (1.3430795078613198,2.6504511901978782);
			\draw [line width=1.pt] (1.6320081555242092,3.145425654489996) -- (1.5981222475041676,3.2389259309352227);
			\draw [line width=1.pt] (1.6320081555242092,3.145425654489996) -- (1.7300862141296856,3.1618953502327005);
			\draw [line width=1.pt] (1.3430795078613198,2.6504511901978782)-- (1.579344985895828,3.9540082844245044);
			\draw [line width=1.pt] (1.4725667191627114,3.364876227269513) -- (1.5363880348285606,3.288604370570226);
			\draw [line width=1.pt] (1.4725667191627114,3.364876227269513) -- (1.3860364589285874,3.3158551040521558);
			\draw [line width=1.pt] (2.349529699131545,3.428474370171321)-- (2.5982451911328757,4.800722430867884);
			\draw [line width=1.pt] (2.485241917416348,4.177244890477925) -- (2.549063233082197,4.100973033778637);
			\draw [line width=1.pt] (2.485241917416348,4.177244890477925) -- (2.3987116571822242,4.128223767260567);
			\draw [line width=1.pt] (2.252981197226564,2.895783408960269)-- (2.,1.5);
			\draw [line width=1.pt] (2.1151361263291446,2.135245214521812) -- (2.0513148106632952,2.2115170712211);
			\draw [line width=1.pt] (2.1151361263291446,2.135245214521812) -- (2.2016663865632684,2.1842663377391696);
			\draw [line width=1.pt] (4.158722734895537,3.4194005287350056)-- (3.92302539372762,4.794406670227618);
			\draw [line width=1.pt] (4.030117416655505,4.1696555020512935) -- (4.116176347395557,4.119811576668599);
			\draw [line width=1.pt] (4.030117416655505,4.1696555020512935) -- (3.9655717812276,4.093995622294024);
			\draw [line width=1.pt] (4.238691937599109,2.9528778929563515)-- (4.4897723610830855,1.4881302526633837);
			\draw [line width=1.pt] (4.3749887969971715,2.1577521702398856) -- (4.28892986625712,2.2075960956225793);
			\draw [line width=1.pt] (4.3749887969971715,2.1577521702398856) -- (4.439534432425076,2.2334120499971557);
			\draw [line width=1.pt] (3.693011744500132,3.873174532964584)-- (2.5982451911328757,4.800722430867884);
			\draw [line width=1.pt] (3.097052252196739,4.378104988340024) -- (3.1950162755250524,4.395239940659952);
			\draw [line width=1.pt] (3.097052252196739,4.378104988340024) -- (3.0962406601079553,4.278657023172516);
			\draw [line width=1.pt] (4.112095555875139,3.518103118798182)-- (5.157632071539149,2.6322656822209947);
			\draw [line width=1.pt] (4.68344002932691,3.034027894085798) -- (4.585476005998597,3.01689294176587);
			\draw [line width=1.pt] (4.68344002932691,3.034027894085798) -- (4.684251621415694,3.133475859253306);
			\draw [line width=1.pt] (2.9494006628168052,3.9474766773682366)-- (1.579344985895828,3.9540082844245044);
			\draw [line width=1.pt] (2.2007063919258494,3.9510460044146587) -- (2.264737052578263,4.02714219981293);
			\draw [line width=1.pt] (2.2007063919258494,3.9510460044146587) -- (2.2640085961343703,3.8743427619798094);
			\draw [line width=1.pt] (3.5431283140845564,3.9446461385939027)-- (4.93380621359405,3.938016217099146);
			\draw [line width=1.pt] (4.302133696269771,3.9410276543282357) -- (4.238103035617358,3.8649314589299633);
			\draw [line width=1.pt] (4.302133696269771,3.9410276543282357) -- (4.238831492061251,4.0177308967630845);
			\draw [line width=1.pt] (2.,1.5)-- (4.93380621359405,3.938016217099146);
			\draw [line width=1.pt] (3.515869524730399,2.7596995899642813) -- (3.515732884494676,2.660248407029524);
			\draw [line width=1.pt] (3.515869524730399,2.7596995899642813) -- (3.418073329099375,2.777767810069621);
			\draw [line width=1.pt] (2.,1.5)-- (1.579344985895828,3.9540082844245044);
			\draw [line width=1.pt] (1.7789158452918408,2.789756044782234) -- (1.864974776031892,2.7399121193995404);
			\draw [line width=1.pt] (1.7789158452918408,2.789756044782234) -- (1.714370209863936,2.714096165024964);
			\draw [line width=1.pt] (1.6715397539306598,2.075225595098939)-- (1.3430795078613198,2.6504511901978782);
			\draw [line width=1.pt] (1.47573927375824,2.4181269022606626) -- (1.573655842430695,2.400722821213708);
			\draw [line width=1.pt] (1.47573927375824,2.4181269022606626) -- (1.4409634193612846,2.3249539640831087);
			\draw [line width=1.pt] (1.6715397539306598,2.075225595098939)-- (2.,1.5);
			\draw [line width=1.pt] (1.8673402341030796,1.732324287937215) -- (1.7694236654306248,1.7497283689841694);
			\draw [line width=1.pt] (1.8673402341030796,1.732324287937215) -- (1.9021160885000352,1.825497226114769);
			\draw [line width=1.pt] (2.5982451911328757,4.800722430867884)-- (3.2427260631821486,1.0409636115652288);
			\draw [line width=1.pt] (2.9312422748135862,2.8580911186465743) -- (2.845183344073535,2.9079350440292684);
			\draw [line width=1.pt] (2.9312422748135862,2.8580911186465743) -- (2.9957879102414906,2.9337509984038443);
			\draw [line width=1.pt] (3.2342219060273583,3.686949642008015)-- (2.5982451911328757,4.800722430867884);
			\draw [line width=1.pt] (2.884663191442367,4.299124546050203) -- (2.982579760114822,4.281720465003249);
			\draw [line width=1.pt] (2.884663191442367,4.299124546050203) -- (2.8498873370454114,4.20595160787265);
			\draw [line width=1.pt] (3.763113165245763,2.760713383686791)-- (4.4897723610830855,1.4881302526633837);
			\draw [line width=1.pt] (4.158013120302174,2.0691333085628334) -- (4.060096551629719,2.086537389609788);
			\draw [line width=1.pt] (4.158013120302174,2.0691333085628334) -- (4.192788974699129,2.1623062467403873);
			\draw [line width=1.pt] (2.7155668451375723,2.1434837666109763)-- (1.3430795078613198,2.6504511901978782);
			\draw [line width=1.pt] (1.9696001109099994,2.4190278995597003) -- (2.055795681885774,2.4686351571117635);
			\draw [line width=1.pt] (1.9696001109099994,2.4190278995597003) -- (2.002850671113119,2.3252997996970906);
			\draw [line width=1.pt] (3.1666483659755613,1.9768639150204133)-- (4.4897723610830855,1.4881302526633837);
			\draw [line width=1.pt] (3.88793342911877,1.7104366626866254) -- (3.801737858142996,1.660829405134562);
			\draw [line width=1.pt] (3.88793342911877,1.7104366626866254) -- (3.8546828689156514,1.804164762549235);
			\draw [line width=1.pt] (3.4014536698064948,2.002534133044542)-- (2.,1.5);
			\draw [line width=1.pt] (2.640796146999345,1.729777082987739) -- (2.67493885466181,1.8231838920069539);
			\draw [line width=1.pt] (2.640796146999345,1.729777082987739) -- (2.726514815144686,1.6793502410375878);
			\draw [line width=1.pt] (3.839160339799589,2.159487264405042)-- (5.157632071539149,2.6322656822209947);
			\draw [line width=1.pt] (4.558326893573272,2.4173664568475495) -- (4.5241841859108085,2.323959647828335);
			\draw [line width=1.pt] (4.558326893573272,2.4173664568475495) -- (4.4726082254279325,2.4677932987977007);
			\draw [line width=1.pt] (3.2395327542224077,3.6234898856745454)-- (3.92302539372762,4.794406670227618);
			\draw [line width=1.pt] (3.613375149267731,4.263933264045047) -- (3.6472610572877726,4.1704329875998205);
			\draw [line width=1.pt] (3.613375149267731,4.263933264045047) -- (3.515297090662255,4.247463568302342);
			\draw [line width=1.pt] (2.7026046458259447,2.7036582768442416)-- (2.,1.5);
			\draw [line width=1.pt] (2.319206247620255,2.0468441523281546) -- (2.2853203396002133,2.140344428773382);
			\draw [line width=1.pt] (2.319206247620255,2.0468441523281546) -- (2.4172843062257314,2.0633138480708593);
			\draw [line width=1.pt] (2.677287141297686,2.644090485198893)-- (1.3430795078613198,2.6504511901978782);
			\draw [line width=1.pt] (1.946516892149036,2.6475743612166736) -- (2.0105475528014494,2.723670556614946);
			\draw [line width=1.pt] (1.946516892149036,2.6475743612166736) -- (2.009819096357557,2.5708711187818247);
			\draw [line width=1.pt] (3.806211441031346,2.638708448507725)-- (5.157632071539149,2.6322656822209947);
			\draw [line width=1.pt] (4.5455881887157155,2.635183541846071) -- (4.481557528063303,2.559087346447799);
			\draw [line width=1.pt] (4.5455881887157155,2.635183541846071) -- (4.482285984507195,2.71188678428092);
			\draw [line width=1.pt] (2.4606071000121705,3.579125476895851)-- (1.3430795078613198,2.6504511901978782);
			\draw [line width=1.pt] (1.8528768860033715,3.074096852132156) -- (1.8530135262390945,3.1735480350669127);
			\draw [line width=1.pt] (1.8528768860033715,3.074096852132156) -- (1.9506730816343956,3.056028632026816);
			\draw [line width=1.pt] (2.8051024467994927,3.8654038405761124)-- (3.92302539372762,4.794406670227618);
			\draw [line width=1.pt] (3.41303033819693,4.370596736816574) -- (3.412893697961207,4.271145553881817);
			\draw [line width=1.pt] (3.41303033819693,4.370596736816574) -- (3.315234142565906,4.388664956921914);
			\begin{scriptsize}
				\draw [fill=magentaIBM] (2.,1.5) circle (2.5pt);
				\draw [fill=ffffff] (3.2427260631821486,1.0409636115652288) circle (2.5pt);
				\draw [fill=magentaIBM] (4.4897723610830855,1.4881302526633837) circle (2.5pt);
				\draw [fill=magentaIBM] (5.157632071539149,2.6322656822209947) circle (2.5pt);
				\draw [fill=celesteIBM] (4.93380621359405,3.938016217099146) circle (2.5pt);
				\draw [fill=magentaIBM] (3.92302539372762,4.794406670227618) circle (2.5pt);
				\draw [fill=magentaIBM] (2.5982451911328757,4.800722430867884) circle (2.5pt);
				\draw [fill=celesteIBM] (1.579344985895828,3.9540082844245044) circle (2.5pt);
				\draw [fill=magentaIBM] (1.3430795078613198,2.6504511901978782) circle (2.5pt);
				\draw [fill=arancioneIBM] (1.1397454373691325,1.020659288306518) circle (2.5pt);
				\draw [fill=arancioneIBM] (5.345802988804287,1.0436511761936775) circle (2.5pt);
				\draw [fill=arancioneIBM] (1.415341446445712,0.) circle (2.5pt);
				\draw [fill=arancioneIBM] (5.070110679918585,0.) circle (2.5pt);
				\draw [fill=arancioneIBM] (1.236070276851942,0.41146195686096987) circle (2.5pt);
				\draw [fill=arancioneIBM] (5.2487584059961065,0.40947804746083494) circle (2.5pt);
				\draw [fill=arancioneIBM] (2.0035395443744655,-0.658258725179788) circle (2.5pt);
				\draw [fill=arancioneIBM] (4.443418837789942,-0.6856737675889872) circle (2.5pt);
				\draw [fill=arancioneIBM] (3.1534426699475673,-1.0602189730238445) circle (2.5pt);
			\end{scriptsize}
	\end{tikzpicture}} \\
	\caption{Example of reconstruction of the directed power graph from a power graph: elements of the same order have the same colour.}
	\label{fig:subfig}
\end{figure}
	
		\vspace{4mm}
\noindent {{\bf Acknowledgments}} We wish to thank an anonymous referee for the care in dealing with our paper and the many suggestions for improving clarity and readability. We also wish to thank Peter Cameron for his encouragement to deepen the topics of  his paper \cite{Cameron_2}, 
Dikran Dikranjan, José C\'aceres and Josef \v{S}lapal  for illuminating conversations about the Moore closure operators for graphs.  Daniela Bubboloni is partially supported by GNSAGA of INdAM (Italy), and local funding from the Universit\`a degli Studi di Firenze. This work is also funded by the European Union - Next Generation EU, Missione 4 Componente 1, CUP B53D23009410006, PRIN 2022- 2022PSTWLB - Group Theory and Applications.
 \vspace{4mm}

		\vspace{10mm}
\noindent {\Large{\bf Conflict of interest}}
\vspace{2mm}

\noindent Declarations of conflict of interest in the manuscript: none.

\section{Appendix}

\subsection{The algorithm for the reconstruction}\label{sezione-alg}

The proof of the Main Theorem  is a constructive proof that can be explicitly converted into an algorithm. 
We call this algorithm Reconstruction Algorithm of Directed Power Graphs (see Algorithm \ref{ReconstructionOfDirectedPG}).
We now briefly analyze and comment the main steps of the Reconstruction Algorithm of Directed Power Graphs. Such algorithm is divided into three sub-algorithms (see Algorithms \ref{ClassTypeDistinctionAlgorithm}, \ref{CompoundClassPartitioningAlgorithm}, \ref{ArcsDirectionsChoiceAlgorithm}). 
The blue texts appearing in the pseudo-code represent some helpful comments.

The input of the Reconstruction Algorithm of Directed Power Graphs is a graph known to be the power graph of a finite group $G$. We think $G$ inside a black box with no access.
In other words, the group $G$ is unknown and, usually, it will remain undisclosed till the end of the algorithm. This reflects the fact non-isomorphic groups may have isomorphic power graphs.
Note also that, in order to deal with the graph vertices, we need to label them. A different labelling may give a different output, but the digraphs obtained are isomorphic.

We start evaluating the cardinality $|\mathcal{S}|$ of the star class.
Such information breaks the algorithm into two possible roads to follow.
If $|S|>1$, using  Proposition \ref{S>1}, we recognize the group in play and hence we trivially  reconstruct the directed power graph. 
The case of true interest is then when $|\mathcal{S}|=1$.
Since the star class has only one element, that element is necessarily the identity $1\in G$.
We partition the vertex set in $\mathtt{N}$-classes.
Then, for each $\mathtt{N}$-class $C \ne \mathcal{S}$, we determine the type with Algorithm \ref{ClassTypeDistinctionAlgorithm}. In this part we use the information from Proposition \ref{CarachetisationN-classes_1}, \ref{CarachetisationN-classes_2} and \ref{LemmaCriticalClassMio} in order to distinguish the classes of plain type from the ones of compound type.

Knowing the type of every $\mathtt{N}$-class allows us to create a partition of the vertex set that, up to a graph isomorphism, is the one in $\diamond$-classes using Algorithm \ref{CompoundClassPartitioningAlgorithm}. In this step we only have to partition all the compound classes in subset of given cardinalities.

Once we have the partition of the vertex set in $\diamond$-classes, from the  knowledge of the edge set of the graph we reconstruct the directed power graph as shown in Algorithm \ref{ArcsDirectionsChoiceAlgorithm}. Lemma \ref{lemmaClassiDiamond} is the main player here.

\begin{algorithm}
	\caption{ClassTypeDistinctionAlgorithm}\label{ClassTypeDistinctionAlgorithm}
	\begin{algorithmic}[1]
		\Require A power graph $\Gamma=(V, E)$, $C\ne \mathcal{S}$ an $\mathtt{N}$-class of $\Gamma$
		\Ensure The type of $C$ 
		\State $\hat{C} \gets N[N[C]]$
		\State $\hat{c} \gets |\hat{C}|$
		\State $c \gets |C|$
		\If{$\hat{c}$ it is not a prime power}
		\State \Return $C$ is of plain type
		\Else 
		\State Let $p$, $r$ be such that $\hat{c}=p^r$
		\If{$r<2$}
		\State \Return $C$ is of plain type \Comment{\textcolor{blue}{By Propositions \ref{propC_y} and \ref{CarachetisationN-classes_1}}}
		\EndIf 
		\State $s\gets 0$ \Comment{\textcolor{blue}{From now on $r\geq2$}}
		\State $FIND \gets 0$
		\While{$s\leq r-2$ and $FIND=0$}
		\If{$c=p^r-p^s$}
		\State $FIND \gets 1$
		\Else 
		\State $s \gets s+1$
		\EndIf
		\EndWhile
		\If{$FIND = 0$}
		\State \Return $C$ is of plain type \Comment{\textcolor{blue}{By Propositions  \ref{propC_y} and \ref{CarachetisationN-classes_1}}}
		\EndIf
		\If{$s\ne 0$}
		\State \Return $C$ is of compound type \Comment{\textcolor{blue}{By Proposition \ref{CarachetisationN-classes_2}}}
		\EndIf 
		
		\Comment{\textcolor{blue}{If we reach this point, then we have $|\hat{C}|=p^r$ and $\hat{C}=C \cupdot \{1\}$. Hence $C$ is critical}}
		\State Take a $y \in C$
		\ForAll{$x \in G\setminus \hat{C}$}
		\State $|X| \gets  |[x]_{\mathtt{N}}|$
		\If{$|X|\leq c$ and $\{x,y\}\in E$}
		\State \Return $C$ is of plain type \Comment{\textcolor{blue}{By Proposition \ref{LemmaCriticalClassMio}}}
		\EndIf
		\EndFor
		\State \Return $C$ is of compound type \Comment{\textcolor{blue}{By Proposition \ref{LemmaCriticalClassMio}}}
		\EndIf
		
	\end{algorithmic}
\end{algorithm}

\begin{algorithm}
	\caption{CompoundClassPartitioningAlgorithm}\label{CompoundClassPartitioningAlgorithm}
	\begin{algorithmic}[1]
		\Require $C$ a class of compound type, $(p,r,s)$ the parameters of $C$
		\Ensure The partition of $C$ in $\diamond$-classes, up to graph isomorphism
		\State $C=\{a_j \, : \, j\in [|C|]\}$ \Comment{\textcolor{blue}{We name the elements of $C$ as shown}}
		\State $m\gets 0$
		\For{$i = s+1$ to $r$}
		\State $n\gets \phi(p^i)$
		\State $X_i(C)\gets \{a_{m+1}, ..., a_{m+n}\}$
		\State $m\gets  m+n$
		\EndFor
		\State $\mathcal{K}_{C} \gets \{X_i(C) \, : \, s+1 \leq i \leq r \}$
		\State \Return $\mathcal{K}_{C}$
		
	\end{algorithmic}
\end{algorithm}

\begin{algorithm}
	\caption{ArcsDirectionsChoiceAlgorithm}\label{ArcsDirectionsChoiceAlgorithm}
	\begin{algorithmic}[1]
		\Require The set $V$ of all vertices of a power graph and the partition $\mathcal{K}_{\diamond}$ of $V\setminus \{1\}$ in $\diamond$-classes
		\Ensure The directed power graph
		\State $A_{\vec{\Gamma}} \gets \varnothing$ 
		\State $K \gets \varnothing$ \Comment{\textcolor{blue}{It is needed to avoid useless checks}} 
		
		\ForAll{$X \in \mathcal{K}_{\diamond}$} \Comment{\textcolor{blue}{We now give directions to the edges between elements in $X$}}
		\State $H\gets \varnothing$
		\ForAll{$x \in H$} 
		\State $H\gets H\cup \{x\}$
		\ForAll{$y \in X\setminus H$}
		\State $A_{\vec{\Gamma}}\gets  A_{\vec{\Gamma}} \cup \{(x,y),(y,x)\}$
		\EndFor
		\EndFor
		\State $K\gets K \cup \{X\}$
		\State $n_X \gets |X|$
		\ForAll{$Y \in \mathcal{K}_{\diamond}\setminus K$} 
		
		\Comment{\textcolor{blue}{Here we take care of the edges between the vertices in $X$ and the vertices in the other $\diamond$-classes different from the star class}}
		\If{$X$ is joined to $Y$}
		\State $n_Y \gets |Y|$
		\If{$n_X > n_Y$}
		\State $A_{\vec{\Gamma}}\gets  A_{\vec{\Gamma}} \cup \{ (x,y) | x\in X \mbox{ and } y \in Y\}$
		\ElsIf{$n_Y > n_X$}
		\State $A_{\vec{\Gamma}}\gets A_{\vec{\Gamma}} \cup \{ (y,x) | x\in X \mbox{ and } y \in Y\}$
		\ElsIf{$n_X=n_Y$}
		\If{$X$ is joined with an involution} 
		
		\Comment{\textcolor{blue}{Note that we recognise the involutions as the vertices in $\diamond$-classes distinct from $\mathcal{S}$ with cardinality equals to $1$}}
		\State $A_{\vec{\Gamma}} \gets A_{\vec{\Gamma}} \cup \{ (x,y) | x\in X \mbox{ and } y \in Y\}$
		\Else
		\State $A_{\vec{\Gamma}}\gets A_{\vec{\Gamma}} \cup \{ (y,x) | x\in X \mbox{ and } y \in Y\}$
		\EndIf
		\EndIf
		\EndIf
		\EndFor
		\EndFor
		\ForAll{$x\in V\setminus \{1\}$} \Comment{\textcolor{blue}{Here take care of all the edges with the identity as an end vertex}}
		\State $A_{\vec{\Gamma}}\gets A_{\vec{\Gamma}} \cup \{(x,1)\}$
		\EndFor
		\State
		\Return $\vec{\Gamma}= (V, A_{\vec{\Gamma}})$
	\end{algorithmic}
\end{algorithm}

	\begin{algorithm} \caption{Reconstruction Algorithm of Directed Power Graphs}

	\label{ReconstructionOfDirectedPG}
	\begin{algorithmic}[1]
		
	\Require  $\Gamma=(V,E)$ a power graph
		\Ensure $\vec{\Gamma}=(V,A)$ the directed power graph
		\State $s \gets |\mathcal{S}|$
		\If{$s>1$}
		\State $n \gets |V|$
		\If{$s=n $}
		\State \Return $\vec{\Gamma}$ as the directed power graph of the cyclic group of order $n$
		\EndIf
		\If{$s= 1+ \phi(n)$}
		\State \Return $\vec{\Gamma}$ be the directed power graph of the cyclic group of order $n$
		\EndIf
		\If{$s=2$}
		\State \Return $\vec{\Gamma}$ as the directed power graph of the generalised quaternion group of order $n$
		\EndIf
		\EndIf \Comment{\textcolor{blue}{From now on $s=1$}}
		\State $\mathcal{K}_{\diamond} \gets \varnothing$
		\State $U \gets \{1\}$
		\While{$U \ne V$}
		\State Take a $x \in V\setminus U$
		\State $C \gets [x]_{\mathtt{N}}$
		\State Type $\gets $ ClassTypeDistinctAlgorithm($\Gamma, C$) \Comment{\textcolor{blue}{Algorithm \ref{ClassTypeDistinctionAlgorithm}}}
		\If{Type = plain}
		\State $\mathcal{K}_{\diamond}\gets \mathcal{K}_{\diamond}\cup C$
		\State $U\gets \bigcup \{w \, |\, w \in \mathcal{K}_{\diamond}\} \cup \{1\}$
		\Else
		\State $(p,r,s) \gets$ parameters of $C$
		\State $\mathcal{K}_{C} \gets $ CompoundClassPartitioningAlgorithm($C, (p,r,s)$) \Comment{\textcolor{blue}{Algorithm \ref{CompoundClassPartitioningAlgorithm}}}
		\State $\mathcal{K}_{\diamond} \gets \mathcal{K}_{\diamond} \cup \mathcal{K}_{C}$
		\State $U \gets \bigcup \{w \, |\, w \in \mathcal{K}_{\diamond}\} \cup \{1\}$
		\EndIf
		\EndWhile
		\State $\vec{\Gamma} \gets $ ArcsDirectionsChoiceAlgorithm($V, \mathcal{K}_{\diamond}$) \Comment{\textcolor{blue}{Algorithm \ref{ArcsDirectionsChoiceAlgorithm}}}
		\State \Return $\vec{\Gamma}$
	\end{algorithmic}
\end{algorithm}

\end{document}